\newcommand{\R}{\mathbb{R}}
\newtheorem{theo}{Theorem}[section]
\newtheorem{coro}[theo]{Corollary}
\newtheorem{lemma}[theo]{Lemma}
\newtheorem{prop}[theo]{Proposition}
\theoremstyle{definition}
\newtheorem{remark}[theo]{Remark}
\DeclareMathOperator{\dist}{dist}
\newcommand{\cI}{{\mathcal I}}
\newcommand{\cL}{{\mathcal L}}   
\newcommand{\cM}{{\mathcal M}}   
\newcommand{\cN}{{\mathcal N}}
\newcommand{\cU}{{\mathcal U}}
\newcommand{\eps}{\varepsilon}
\renewcommand{\epsilon}{\varepsilon}
\newcommand{\Sn}{{\mathbb S}^{N-1}}
\numberwithin{equation}{section}
\title{On the asymptotic shape of solutions to Neumann problems 
  for non-cooperative parabolic systems}
\author{Alberto Salda\~{n}a\footnote{Institut f\"{u}r Mathematik, Johann Wolfgang Goethe-Universit\"{a}t Frankfurt, Robert-Mayer-Str. 10, D-60054 Frankfurt, saldana@math.uni-frankfurt.de.}  \ \ \  \& \ \ \ Tobias Weth\footnote{Institut f\"{u}r Mathematik, Johann Wolfgang Goethe-Universit\"{a}t Frankfurt, Robert-Mayer-Str. 10, D-60054 Frankfurt, weth@math.uni-frankfurt.de.} }
\date{}
\begin{document}
\maketitle

\begin{abstract}
We consider a class of nonautonomous parabolic 
competition-diffusion systems on bounded radial domains under Neumann boundary conditions. We show
that, if the initial profiles satisfy a reflection inequality with
respect to a hyperplane, then global positive solutions are
asymptotically (in time) foliated Schwarz symmetric with respect to
antipodal points. Additionally, a related result for (positive and
sign changing solutions) of a scalar equation with Neumann boundary
conditions is given. The asymptotic shape of solutions to cooperative systems is also discussed.
\end{abstract}
{\footnotesize
\begin{center}
\textit{Keywords:} Lotka-Volterra systems,
competition, rotating plane method
\end{center}
\begin{center}
\textit{2010 Mathematics Subject Classification:} 35K51 (primary),
35B40 (secondary)
\end{center}
}

\section{Introduction}

The focus of the present paper is the asymptotic shape of positive
global solutions of parabolic systems with competition on bounded radial
domains with Neumann boundary conditions. The problem which mainly
motivates our
study is the following Lotka-Volterra System of two equations:
\begin{equation}\label{Lotka:Volterra:system}
\begin{aligned}
(u_1)_t-\mu_1\Delta u_1 &= a_1(t) u_1 - b_1(t) u_1^2 -\alpha_1(t) u_1u_2,&&\quad x \in B,\ t>0,\\
(u_2)_t-\mu_2\Delta u_2 &= a_2(t) u_2 - b_2(t) u_2^2 -\alpha_2(t) u_1u_2,&&\quad x\in B,\ t>0,\\
\frac{\partial u_i}{\partial\nu}&=0 &&\quad \text{on $\partial B
  \times (0,\infty)$,}\\
u_i(x,0)&=u_{0,i}(x)\geq 0&&\quad \text{for $x\in B,$ $i=1,2$.}
\end{aligned}
\end{equation}
Here and in the remainder of the paper, $B$ denotes a ball or an annulus in
$\mathbb R^N$ with $N\geq 2$, and $\nu$ denotes the unit outer normal
on $\partial B$. Moreover, $\mu_1$ and $\mu_2$ are positive
constants and 
\begin{equation}\label{Lotka:coefficients}
  \begin{aligned}
  &\text{$a_i,b_i, \alpha_i \in L^\infty((0,\infty))$ satisfy}\\   
  &\text{$a_i(t),b_i(t) \geq 0$ for $t>0$ \ \ and\ \  $\inf_{t>0} \alpha_i(t)>0$ for }i=1,2.
  \end{aligned}
\end{equation}
The Lotka-Volterra System is commonly used to
model the competition between two different species, and the
coefficients $\mu_i, a_i, b_i, \alpha_i$ represent diffusion, birth,
saturation, and competition rates respectively (see
\cite{holmes}). In the literature, the system is mostly considered
with constant coefficients for matters of simplicity, whereas
it is more natural to assume time-dependence as e.g. in
\cite{cantrell,langa,Mierczynski} in order to model
the effect of different time periods on the birth rates, the movement, or the aggressiveness of the species. Even in the case of constant coefficients,
the possible dynamics of the system has a very rich structure and
depend strongly on relationships
between these constants, see e.g.
\cite{cantrell,crooks,dancer,lou,dancer:zhang}. In the case of
time-dependent coefficients, a full understanding of the asymptotic
dynamics is out of reach, but one can still guess that the shape
of the underlying domain has some effect on the shape of solutions for
large positive times. In the present paper we study this question on
a radial bounded domain $B$. More precisely, for a solution
$u=(u_1,u_2)$ of (\ref{Lotka:Volterra:system}), we study symmetry and
monotonicity properties of elements in the associated omega limit set,
which is defined as 
\begin{align*}
\omega(u)&= \omega(u_1,u_2):=\{(z_1,z_2)\in C(\overline{B})\times
C(\overline{B})\mid \\&
\max_{i=1,2}\lim_{n\to\infty}\|u_i(\cdot,t_n)-z_i\|_{L^\infty(B)}=0
\text{ for some sequence } t_n\to\infty\}.\nonumber
\end{align*}
For global solutions which are uniformly bounded and have
equicontinuous semiorbits $\{u_i(\cdot,t):t\geq 1\}$, the set
$\omega(u)$ is nonempty, compact, and connected. The
equicontinuity can be obtained under mild boundedness and regularity
assumptions on the equation and using boundary and interior H\"{o}lder
estimates (see Lemma \ref{regularity} below). To present our results we need to introduce some notation. Let $\Sn=\{x\in\mathbb R^N: |x|=1\}$ be the unit sphere in
$\R^N,$ $N\geq 2$.  For a vector $e\in \Sn$, 
we consider the hyperplane $H(e):=\{x\in \mathbb R^N: x\cdot e=0\}$
and the half domain $B(e):=\{x\in B: x\cdot e>0\}.$  We write also
$\sigma_e: \overline{B}\to \overline{B}$ to denote reflection with respect to $H(e),$
i.e. $\sigma_e(x):=x-2(x\cdot e)e$ for each $x\in B.$ Following
\cite{smets}, we say that a
function $u\in C(B)$ is \textit{foliated Schwarz symmetric with
respect to some unit vector $p\in \Sn$} if $u$ is axially symmetric with
respect to the axis $\mathbb R p$ and nonincreasing in the polar angle
$\theta:= \operatorname{arccos}(\frac{x}{|x|}\cdot p)\in [0,\pi].$ 
We refer the reader to the survey article \cite{wethsurvey} for a
broad discussion of symmetry properties of this type. Our main result concerning (\ref{Lotka:Volterra:system}) is the
following. 

\begin{theo}\label{corollary:Lotka:Volterra}
Suppose that 
\eqref{Lotka:coefficients} holds, and let $u=(u_1,u_2)$ be a classical
solution of \eqref{Lotka:Volterra:system} such that
$\|u_i\|_{L^\infty(B \times (0,\infty))} < \infty$ for
$i=1,2$. Moreover, assume that 
$$
\leqno{\rm (h0)}\quad u_{0,1} \geq u_{0,1} \circ \sigma_e, \:u_{0,2} \leq
u_{0,2}\circ \sigma_e \quad \text{in $B(e)$}\ \
\left \{
\begin{aligned}
&\text{for some $e\in \Sn$ with}\\
&\text{$u_{0,i}\not\equiv u_{0,i}\circ \sigma_e$ for $i=1,2$.}  
\end{aligned}
\right.
$$
Then there is some $p\in \Sn$ such that every $(z_1,z_2)\in \omega(u)$ has
the property that $z_1$ is foliated Schwarz symmetric with respect to $p$ and $z_2$ is foliated Schwarz symmetric with respect to $-p.$ 
\end{theo}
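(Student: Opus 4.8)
The strategy is the \emph{rotating plane method}, and the point that makes it work here is an algebraic one: although \eqref{Lotka:Volterra:system} is competitive (non‑cooperative), the system satisfied by the difference of a solution and its reflection across a hyperplane is \emph{cooperative}, so that comparison principles become available. For $e\in\Sn$ set, on $B(e)\times(0,\infty)$,
\[
w_1^e:=u_1-u_1\circ\sigma_e,\qquad w_2^e:=u_2\circ\sigma_e-u_2 .
\]
Since $B$ is radial, $\Delta$ and the Neumann condition are invariant under $\sigma_e$; subtracting the equations for $u_i$ and $u_i\circ\sigma_e$ and using the identities $f^2-g^2=(f+g)(f-g)$ and $fg-\tilde f\tilde g=g(f-\tilde f)+\tilde f(g-\tilde g)$ gives
\[
\partial_t w_1^e-\mu_1\Delta w_1^e=c_{11}w_1^e+c_{12}w_2^e,\qquad
\partial_t w_2^e-\mu_2\Delta w_2^e=c_{21}w_1^e+c_{22}w_2^e\qquad\text{in }B(e)\times(0,\infty),
\]
with $c_{12}=\alpha_1\,(u_1\circ\sigma_e)\ge0$, $c_{21}=\alpha_2\,u_2\ge0$, $c_{11},c_{22}\in L^\infty$, and with $\partial_\nu w_i^e=0$ on $\partial B\cap\overline{B(e)}$, $w_i^e=0$ on $H(e)\cap B$. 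All coefficients are bounded (note $u_i\ge0$ is preserved, as the reaction vanishes where $u_i=0$, and the $u_i$ are bounded by assumption), and the off‑diagonal coefficients are nonnegative, so the system is cooperative. Hypothesis (h0) is exactly that $w_1^{e_0}(\cdot,0)\ge0$ and $w_2^{e_0}(\cdot,0)\ge0$, neither identically zero, where $e_0:=e$. Hence, by the maximum principle for cooperative parabolic systems, $w_i^{e_0}\ge0$ on $B(e_0)\times(0,\infty)$; reading each equation with its nonnegative coupling term as a source, the scalar strong maximum principle gives $w_i^{e_0}>0$ in $B(e_0)$ and a Hopf boundary inequality on $H(e_0)\cap B$ for every $t>0$.

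Next I introduce the closed set
\[
\mathcal{S}:=\bigl\{e\in\Sn:\ z_1-z_1\circ\sigma_e\ge0\ \text{and}\ z_2\circ\sigma_e-z_2\ge0\ \text{in }B(e)\ \text{for every }(z_1,z_2)\in\omega(u)\bigr\},
\]
so that $e_0\in\mathcal{S}$ by the previous step. The analytic core of the proof is the linear theory of nonnegative solutions of nonautonomous cooperative parabolic problems (of Poláčik–Földes type), combined with the boundedness and equicontinuity of the semiorbits from Lemma~\ref{regularity}: if $e\in\mathcal{S}$, then for every $z=(z_1,z_2)\in\omega(u)$ either $z_1$ and $z_2$ are both symmetric with respect to $H(e)$, or $z_1-z_1\circ\sigma_e>0$ and $z_2\circ\sigma_e-z_2>0$ in $B(e)$ together with a strict Hopf inequality on $H(e)\cap B$; and, because $\omega(u)$ is compact and connected, one and the same alternative holds throughout $\omega(u)$.

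The rotating plane argument then shows that $\mathcal{S}$ contains a closed half‑sphere $\{e\in\Sn:e\cdot p\ge0\}$ for some $p\in\Sn$ (which is all that is needed). Along any great circle $\theta\mapsto e_\theta$ through $e_0$, the set $\{\theta:e_\theta\in\mathcal{S}\}$ is closed by continuity of the elements of $\omega(u)$ and $B(e_{\theta'})\to B(e_\theta)$; at a $\theta$ where the second alternative above holds it is also relatively open, since for a small rotation $B(e_{\theta'})\subseteq B(e_\theta)\cup N$ with $N$ a thin spherical sliver along $H(e_\theta)$, on $B(e_\theta)$ the uniform strict positivity together with the equicontinuity of the semiorbits forces the reflection inequalities for $e_{\theta'}$ there, and on the narrow set $N$ one runs a maximum‑principle/barrier argument started from the Hopf inequality on $H(e_\theta)$. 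Thus, moving along great circles, $\mathcal{S}$ can fail to grow only at directions which are symmetry hyperplanes for all of $\omega(u)$, and analysing the geometry of such directions on $\Sn$ one finds that $\mathcal{S}$ is a spherical cap $\{e\cdot p\ge c\}$ (or all of $\Sn$); a cap with $c>0$ is excluded because its boundary would consist of symmetry directions that propagate inward and force $c\le0$, while if $\mathcal{S}=\Sn$ every element of $\omega(u)$ is symmetric with respect to every hyperplane, hence radial, and the statement holds with any $p$. I expect this to be the hardest part: the uniformity of the Step‑2 alternative over the connected set $\omega(u)$, and the narrow‑domain continuity estimate that transfers strict positivity plus the Hopf inequality at $e_\theta$ to nonnegativity at neighbouring directions.

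Finally, fix $p$ with $\{e\cdot p\ge0\}\subseteq\mathcal{S}$. For $e\perp p$ both $e$ and $-e$ lie in $\mathcal{S}$, whence $z_1-z_1\circ\sigma_e\equiv0$ and $z_2\circ\sigma_e-z_2\equiv0$ for every $(z_1,z_2)\in\omega(u)$; combining these equalities with the one‑sided inequalities valid for all $e$ with $e\cdot p\ge0$, and using the characterisation of foliated Schwarz symmetry in terms of reflections (see \cite{wethsurvey}), we conclude that $z_1$ is foliated Schwarz symmetric with respect to $p$ and $z_2$ with respect to $-p$, for every $(z_1,z_2)\in\omega(u)$.
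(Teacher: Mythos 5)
Your opening observation --- that the reflection differences $(w_1^e,w_2^e)$ satisfy a linear \emph{cooperative} system so that comparison and Hopf estimates become available despite the competitive nonlinearity --- is exactly the paper's starting point (system~\eqref{linear:neumann}--\eqref{linear:neumann-boundary}), and the overall rotating-plane scaffolding (closed/open argument along great circles, reflection characterization of foliated Schwarz symmetry) is in the spirit of the paper's Corollary~\ref{sec:symm-char}. But there is a genuine gap in your Step~2: the claimed dichotomy --- that for $e\in\mathcal{S}$ and every $z\in\omega(u)$, either both components are $\sigma_e$-symmetric, or both $z_1-z_1\circ\sigma_e$ and $z_2\circ\sigma_e-z_2$ are strictly positive with a Hopf inequality --- is false for semi-trivial limit profiles. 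If $(z_1,z_2)=(0,z)\in\omega(u)$ with $z$ not $\sigma_e$-symmetric, then $z_1-z_1\circ\sigma_e\equiv 0$ (so strict positivity fails) while $z_2$ is not symmetric (so neither alternative holds). The competition term can drive a component to zero, so such profiles are a real concern; the introduction of the paper names them explicitly as the reason the perturbation argument ``cannot be carried out directly.'' At the level of $u$ itself, if $u_1(\cdot,t_n)\to 0$ then $u_1^e(\cdot,t_n)\to 0$ too, so no uniform-in-$n$ Hopf bound for $u_1^e$ survives and your narrow-sliver rotating-plane step stalls.

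The paper overcomes this with a normalization procedure that has no analogue in your proposal: Lemma~\ref{normalization:lemma} shows, by reflecting across $\partial B$ (Lemma~\ref{extension:lemma}) and applying the interior Harnack inequality, that $u_1/\|u_1(\cdot,\tau)\|_{L^\infty(B)}$ stays between $1/\eta$ and $\eta$ on $B\times[\tau-3,\tau+3]$ uniformly in $\tau\ge 5$. The normalized functions $v_n=u_1/\beta_n$ therefore retain uniform lower bounds and equicontinuity even when $u_1$ degenerates, and the proof of Lemma~\ref{normalization:lemma:2} splits into two cases according to whether $\|v_n^{\hat e}\|_\infty$ stays bounded away from zero (then the perturbation lemma applies to the pair $(v_n,u_2)$) or tends to zero (then an integration-by-parts argument with the normalized equations~\eqref{normalized:equations} gives a contradiction). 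Also note that the ``Pol\'a\v{c}ik--F\"oldes'' linear theory you invoke is developed for Dirichlet problems; what is actually needed here is the Harnack--Hopf estimate for mixed boundary conditions on half-cylinders (Lemma~\ref{hopf:lemma:Neumann}) and the perturbation lemma (Lemma~\ref{perturbationlemma}), which the paper derives from scratch precisely because the Dirichlet tools do not transfer.
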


This theorem is a direct consequence of a more general result which we
state in Theorem~\ref{main:theorem:neumann} below. Note that the
inequality condition (h0) does not seem very strong, but it is a key
assumption in order to obtain the result. Indeed, for general positive
initial data, foliated Schwarz symmetry cannot be expected, as one may
see already by looking at equilibria (i.e., stationary solutions) in special cases. Consider e.g. the elliptic system  
\begin{equation}\label{Lotka:Volterra:system-elliptic}
\begin{aligned}
-\Delta u_1 &= \lambda u_1 - u_1u_2&&\qquad \text{ in } B,\\
-\Delta u_2 &= \lambda u_2 -  u_1u_2&&\qquad \text{ in } B,\\
\partial_\nu u_1&=\partial_\nu u_2=0&&\qquad  \text{ on $\partial B.$}
\end{aligned}
\end{equation}
Using bifurcation theory, one can detect values $\lambda>0$ and $\eps>0$ such that (\ref{Lotka:Volterra:system-elliptic}) admits positive solutions in the annulus $B= \{x \in \R^2\::\: 1-\eps < |x|<1\}$ such that the angular derivatives of $u_1,u_2$ change sign multiple times and therefore 
neither of the components is foliated Schwarz symmetric, see Theorem~\ref{thm:local:maxima:system}
below in the appendix. Theorem~\ref{corollary:Lotka:Volterra} is
somewhat related to our previous work \cite{saldana:weth}
on scalar nonlinear parabolic equations under Dirichlet boundary
conditions. The main idea of both \cite{saldana:weth} and the present
paper is to obtain the symmetry of elements in the omega limit set by
a rotating plane argument. However, different tools are required to set up the
method under Neumann boundary conditions. In particular, the main
result of \cite{saldana:weth} does 
not extend in a straightforward way to the scalar nonlinear Neumann problem  
\begin{equation}\label{model:scalar}
\begin{aligned}
 u_t-\mu(|x|,t)\Delta u &=f(t,|x|,u)&&\qquad  \text{in $B \times (0,\infty)$,}\\
\partial_\nu u&=0&&\qquad  \text{on $\partial B \times (0,\infty)$,}\\
 u(x,0)&=u_0(x)&& \qquad \text{for $x\in B$.}
\end{aligned}
\end{equation}
It therefore seems appropriate to include a symmetry result for (positive and sign changing)
solutions of (\ref{model:scalar}) in the present paper. This result will be easier to prove
than Theorem~\ref{corollary:Lotka:Volterra}. We need the following hypotheses on the nonlinearity $f$ and the
diffusion coefficient $\mu$. In the following, we put $I_B:=\{|x| : x\in \overline{B}\}$.    
\begin{enumerate}
\item[(H1)] The nonlinearity $f:[0,\infty)\times I_B \times \R\to\R,$ $(t,r,u)\mapsto f(t,r,u)$ is
  locally Lipschitz continuous in $u$ uniformly in $r$
  and $t,$ i.e.,  
$$
\sup_{\genfrac{}{}{0pt}{}{\scriptstyle{r\in I_B,
      t>0,}}{\scriptstyle{u,\bar u\in K, u\neq\bar
      u}}}\!\!\!\frac{|f(t,r,u)-f(t,r,\bar u)|}{|u-\bar u|}< \infty\ \ \  \text{for any compact subset $K\subset [0,\infty)$.}
$$
\item[(H2)] $\sup\limits_{r\in I_B, t>0} |f(t,r,0)|<\infty.$
\item[(H3)] $\mu\in C^1(I_B\times(0,\infty))$ and there are constants
  $\mu^* \ge \mu_* >0$ such that
  $\|\mu_i\|_{C^1(I_B\times(0,\infty))}\le \mu^*$ and  $\mu_i(r,t)\geq \mu_*$ for all $r\in I_B,$ $t>0.$
\end{enumerate}

The following is our main result on (\ref{model:scalar}).
\begin{theo}\label{main:theorem:scalar}
Assume that (H1)-(H3) are satisfied, and let $u\in C^{2,1}(\overline{B}\times(0,\infty))\cap
C(\overline{B}\times[0,\infty))$ be a classical solution of
\eqref{model:scalar} such that  
\begin{equation}
\label{eq:15}
\|u\|_{L^\infty(B\times(0,\infty))}<\infty.
\end{equation}
Suppose furthermore that 
\begin{enumerate}
\item [(H4)] there is $e\in \Sn$ such that $u_0 \geq u_0 \circ
  \sigma_e$ in $B(e)$ and $u_0\not\equiv u_0\circ
  \sigma_e$. 
\end{enumerate}
Then, there is some $p\in \Sn$ such that every element of the
omega limit set 
$$
\omega(u):= \{z \in C(\overline{B})\mid \lim_{n\to\infty}\|u(\cdot,t_n)-z\|_{L^\infty(B)}=0
\text{ for some sequence } t_n\to\infty\}
$$
is foliated Schwarz symmetric with respect to $p$.
\end{theo}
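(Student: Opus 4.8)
\emph{Strategy of proof.}
The plan is to run a rotating hyperplane argument at the level of the omega limit set. Write $e_0\in\Sn$ for the direction provided by (H4), and for an arbitrary $e\in\Sn$ set $w_e(x,t):=u(x,t)-u(\sigma_e x,t)$ on $B(e)\times(0,\infty)$. Since $|\sigma_e x|=|x|$, the reflected function $(x,t)\mapsto u(\sigma_e x,t)$ again solves \eqref{model:scalar}, so $w_e$ solves the linear parabolic problem
\begin{equation*}
\partial_t w_e-\mu(|x|,t)\,\Delta w_e=c_e(x,t)\,w_e \ \text{ in }B(e)\times(0,\infty),\qquad w_e=0 \ \text{ on }\bigl(H(e)\cap B\bigr)\times(0,\infty),
\end{equation*}
together with $\partial_\nu w_e=0$ on the curved part of $\partial B(e)\times(0,\infty)$; here the zeroth-order coefficient $c_e$ --- the difference quotient of $f(t,|x|,\cdot)$ between $u(x,t)$ and $u(\sigma_e x,t)$ --- is bounded in $L^\infty$ uniformly in $e$ by (H1), (H2) and \eqref{eq:15}. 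The Neumann condition persists on the curved boundary because $\sigma_e$ maps $\partial B$ onto itself and the outer normal on $\partial B$ is radial, hence $\sigma_e$-equivariant; this mixed Dirichlet--Neumann structure on $B(e)$ is precisely the feature absent in the Dirichlet setting of \cite{saldana:weth}. By interior and boundary parabolic estimates --- the scalar counterpart of Lemma~\ref{regularity}, using (H1)--(H3) and \eqref{eq:15} --- the orbit $\{u(\cdot,t):t\ge1\}$ is relatively compact in $C(\overline B)$, so that $\omega(u)$ is nonempty, compact and connected, and whenever $u(\cdot,t_n)\to z$ in $C(\overline B)$ one has $w_e(\cdot,t_n)\to w_e^z:=z-z\circ\sigma_e$ in $C(\overline{B(e)})$; moreover these estimates yield a bound on $\|u(\cdot,t)\|_{C^1(\overline B)}$ uniform in $t\ge1$.

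From (H4) we have $w_{e_0}(\cdot,0)\ge0$ and $w_{e_0}(\cdot,0)\not\equiv0$ in $B(e_0)$. The parabolic maximum principle (the bounded zeroth-order term being harmless after the usual exponential rescaling) gives $w_{e_0}\ge0$ in $B(e_0)\times(0,\infty)$ --- so every $z\in\omega(u)$ satisfies $z\ge z\circ\sigma_{e_0}$ in $B(e_0)$ --- and the strong maximum principle together with Hopf's lemma on the curved Neumann part gives in addition $w_{e_0}(\cdot,t)>0$ in $\innt B(e_0)$ for every $t>0$. Introduce now
\begin{equation*}
\mathcal D:=\{e\in\Sn:\ w_e(\cdot,t)\ge0\ \text{in }B(e)\ \text{for all sufficiently large }t\},\qquad
G:=\{e\in\Sn:\ z\ge z\circ\sigma_e\ \text{in }B(e)\ \text{for all }z\in\omega(u)\}.
\end{equation*}
Then $e_0\in\mathcal D\subseteq G$, the set $G$ is closed in $\Sn$ (pass to the limit in the inequality, using continuity of each $z$), and $e\in G$ together with $-e\in G$ forces $z\circ\sigma_e=z$ for every $z\in\omega(u)$.

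The core of the proof is to show that $G$ contains a closed half-sphere $\{e\in\Sn:\ e\cdot p\ge0\}$ for some $p\in\Sn$. Granting this, for $e$ with $e\cdot p=0$ both $e$ and $-e$ lie in $G$, so each $z\in\omega(u)$ is symmetric with respect to every hyperplane through $\R p$, i.e.\ axially symmetric about $\R p$; and for $e\cdot p>0$ the inequality $z\ge z\circ\sigma_e$ in $B(e)$ is exactly monotonicity of $z$ in the polar angle $\theta=\operatorname{arccos}(\tfrac{x}{|x|}\cdot p)$, so every $z\in\omega(u)$ is foliated Schwarz symmetric with respect to $p$, as asserted. To produce $p$ one rotates $e_0$: along every two-dimensional great circle through $e_0$ one rotates $e_0$, in both senses, as long as one stays in $\mathcal D$, and the decisive ingredient is a \emph{gain-of-room} claim --- if $e\in\mathcal D$ but $e$ is not a common symmetry direction (that is, $-e\notin G$), then a full neighbourhood of $e$ in $\Sn$ still lies in $\mathcal D$. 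Combined with the closedness of $G$, the inclusion $\mathcal D\subseteq G$, and an elementary connectedness argument on $\Sn$, this shows that $\mathcal D$ contains an open half-sphere $\{e\in\Sn:\ e\cdot p>0\}$ and hence that $\{e\in\Sn:\ e\cdot p\ge0\}\subseteq G$.

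The main obstacle is making the gain-of-room claim rigorous, since the elements of $\omega(u)$ are merely continuous and cannot be differentiated. The approach is to lift information to the parabolic flow: given $e\in\mathcal D$ that is not a common symmetry direction one has some $z\in\omega(u)$ with $w_e^z\ge0$, $\not\equiv0$, and picking $u(\cdot,t_n)\to z$ one extracts --- using the uniform bounds, the $C^1$ control of $\mu$ in (H3), and weak-$*$ compactness of $c_e$ --- a subsequential limit $\tilde u(\cdot,s)=\lim_n u(\cdot,t_n+s)\in\omega(u)$ solving a limiting linear parabolic problem of the same mixed type with $\tilde u(\cdot,0)=z$. Applying the strong maximum principle and Hopf's lemma (in the interior, on the flat Dirichlet face and on the curved Neumann face) to $\tilde u-\tilde u\circ\sigma_e$ at a positive time $s_0$ produces an element of $\omega(u)$ whose reflection difference is strictly positive away from $H(e)$, vanishes with strictly positive normal derivative on $H(e)\cap B$, and is a classical function, $C^1$ up to the piecewise-smooth boundary --- precisely the configuration for the classical moving-plane ``room'' estimate. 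Two points then require care and are where the Neumann setting demands tools different from those of \cite{saldana:weth}: controlling the Harnack and boundary-Harnack constants for the linear $w_e$-problem uniformly in time (so that the room does not degenerate as $t\to\infty$) and under the Neumann condition on the curved boundary, including at the edge $H(e)\cap\partial B$; and upgrading the positivity statement from the single limit profile $\tilde u(\cdot,s_0)$ to one valid for every element of $\omega(u)$, which combines the connectedness and compactness of $\omega(u)$ with those uniform estimates. Everything else --- compactness of the orbit, closedness of $G$, and the deduction of foliated Schwarz symmetry from the half-sphere --- is routine.
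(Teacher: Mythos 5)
Your proposal has the same overall architecture as the paper's proof: work with reflection differences $u^e(x,t)=u(x,t)-u(\sigma_e x,t)$ which solve a linear parabolic problem on the half-domain $B(e)$ with Dirichlet data on $H(e)\cap B$ and Neumann data on the curved part; define the set of ``good'' directions (your $\mathcal D$, the paper's $\cN$); show it is nonempty (by (H4) plus a Hopf-type lemma) and relatively open; show that boundary points of it force the anti-inequality $z\le z\circ\sigma_e$, hence are symmetry axes; and conclude by a geometric criterion on $\Sn$. The paper packages that last reduction as Corollary~\ref{sec:symm-char} (built on Proposition~\ref{sec:char-foli-schw-1}, quoted from \cite{saldana:weth}); you propose instead to establish directly that $G$ contains a closed half-sphere by rotating along great circles. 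These are equivalent in outcome, but the direct great-circle argument has its own subtleties (the two boundary points of $\mathcal D\cap P$ on a great circle need not be antipodal; two non-antipodal symmetry axes generate a dihedral group and one has to argue from there), which the cited proposition disposes of cleanly.

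The genuine technical deviation is your limit-flow step. You propose extracting a subsequential limit flow $\tilde u(\cdot,s)=\lim_n u(\cdot,t_n+s)$, applying a strong maximum principle and Hopf lemma to $\tilde u-\tilde u\circ\sigma_e$ at some positive time, and then transferring back to finite times. The paper avoids the limit problem entirely: it works at finite times via the quantitative Harnack--Hopf estimate of Lemma~\ref{hopf:lemma:Neumann}, which gives $v(x,t)\ge x_N\,\kappa\bigl(\int_Q v^p\bigr)^{1/p}$ with constants depending only on the $L^\infty$ bounds and the geometry, together with the perturbation Lemma~\ref{perturbationlemma}, which combines that lower bound with the uniform equicontinuity of $u$ and $\nabla u$ to show stability of the sign of $u^{e'}(\cdot,1)$ on $B(e')$ for $e'$ near $e$. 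Your variant can be made to work, but the limiting coefficient $\tilde c_e$ is only a weak-$*$ $L^\infty$ limit, $\tilde u$ is only a strong solution, and the Hopf estimate must hold with a quantitative constant up to the corner $H(e)\cap\partial B$ where the Dirichlet and Neumann portions meet --- this is precisely the estimate you would still have to prove in order to control the ``room'' quantitatively and to pass from a single limit profile to a statement valid for all large times and all nearby directions. In other words, you have correctly located the paper's central technical contribution --- a Harnack--Hopf estimate valid up to the corner under mixed conditions, proved via extension across $\partial B$ (Remark~\ref{extension:remark}) and an explicit barrier --- but your blueprint flags it as a ``point requiring care'' rather than supplying it. That lemma is the one non-routine ingredient separating your sketch from a complete proof; everything else in your outline is consistent with the paper's argument.
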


We now turn to a general class of two-component nonlinear
competitive systems which includes (\ref{Lotka:Volterra:system}). More
precisely, we consider, for $i=1,2$, 
\begin{equation}\label{model:competitive:neumann}
\begin{aligned}
 (u_i)_t-\mu_i(|x|,t)\Delta u_i &=f_i(t,|x|,u_i)-\alpha_i(|x|,t)
 u_1u_2, &&\; \text{$x\in B,\,t>0$,}\\
\partial_\nu u_i&=0,&&\; \text{$x\in \partial B,\ t>0$,}\\
 u_i(x,0)&=u_{0,i}(x) \ge 0 && \; \text{for $x\in B$.}
\end{aligned}
\end{equation}
On the data, we assume the following. 
\begin{itemize}
\item[(h1)] For $i=1,2,$ the function $f_i:[0,\infty)\times I_B\times [0,\infty)\to\R,$ $(t,r,v)\mapsto f(t,r,v)$ is locally Lipschitz continuous in $v$ uniformly with respect to $r$ and $t,$ i.e.  
$$
\sup_{\genfrac{}{}{0pt}{}{\scriptstyle{r\in I_B,
      t>0,}}{\scriptstyle{v,\bar v\in K, v\neq\bar
      v}}}\!\!\!\frac{|f_i(t,r,v)-f_i(t,r,\bar v)|}{|v-\bar v|}<
\infty\ \ \  \text{for any compact subset $K\subset [0,\infty)$}.
$$
Moreover $f_1(t,r,0)=f_2(t,r,0)=0$ for all $r\in I_B, t>0.$
\item[(h2)] $\mu_i\in C^{2,1}(I_B\times(0,\infty))$ and there are
  constants $\mu^* \ge \mu_*>0$ such that
  $\|\mu_i\|_{C^{2,1}(I_B\times(0,\infty))}\le \mu^*$ and  $\mu_i(r,t)\geq
  \mu_*$ for all $r\in I_B,$ $t>0,$ and $i=1,2.$
\item[(h3)] $\alpha_i\in L^\infty(I_B\times(0,\infty))$ and there are
 constants  $\alpha^* \ge \alpha_*>0$ such that $\alpha_* \le
 \alpha_i(r,t)\leq \alpha^*$ for all $r\in I_B,$ $t>0,$ and $i=1,2.$
\end{itemize}

Then we have the following result.

\begin{theo}\label{main:theorem:neumann}
Let $(h1)$--$(h3)$ be satisfied, and let $u_1,u_2\in C^{2,1}(\overline{B}\times(0,\infty))\cap
C(\overline{B}\times[0,\infty))$ be functions such that $u=(u_1,u_2)$ solves
\eqref{model:competitive:neumann} and 
\begin{equation}
\label{eq:12}
\|u_i\|_{L^\infty(B\times(0,\infty))}<\infty \qquad \text{for $i=1,2$.}
\end{equation}
Suppose furthermore that assumption $(h0)$ of
Theorem~\ref{corollary:Lotka:Volterra} holds. Then there is
some $p\in \Sn$ such that every $(z_1,z_2)\in \omega(u)$ has
the property that $z_1$ is foliated Schwarz symmetric with respect to $p$ and $z_2$ is foliated Schwarz symmetric with respect to $-p.$
\end{theo}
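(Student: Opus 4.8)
The plan is to run a rotating-plane argument on the sphere of reflection hyperplanes, adapting the strategy of \cite{saldana:weth} to the Neumann setting while exploiting the sign-reversing structure of the competitive coupling. For a unit vector $e\in\Sn$, set $w_1^e := u_1 - u_1\circ\sigma_e$ and $w_2^e := u_2\circ\sigma_e - u_2$, both functions on $\overline{B(e)}$ which vanish on $H(e)\cap B$; assumption $(h0)$ says $w_1^e,w_2^e\geq 0$ and $\not\equiv 0$ for the distinguished $e$. Since $\mu_i$ and $\alpha_i$ are radial, the reflected functions $u_i\circ\sigma_e$ solve the same equations, so $(w_1^e,w_2^e)$ satisfies a linear cooperative parabolic system in $B(e)$ of the form $(w_i^e)_t - \mu_i\Delta w_i^e = c_i(x,t)\,w_i^e + d_i(x,t)\,w_j^e$ (with $\{i,j\}=\{1,2\}$), with bounded coefficients coming from (h1)--(h3) and the uniform bound \eqref{eq:12}, and with the crucial property that the off-diagonal coefficients $d_i$ are nonnegative — this is precisely where the competition sign and the choice of signs in $w_1^e,w_2^e$ conspire to produce a \emph{cooperative} linearized system. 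The boundary condition on $\partial B(e)\cap\partial B$ is the homogeneous Neumann condition, and on $H(e)\cap B$ it is the homogeneous Dirichlet condition $w_i^e=0$.

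First I would establish, for the distinguished $e$ from $(h0)$, a \emph{strong} positivity statement: $w_1^e>0$ and $w_2^e>0$ in $B(e)$ for all $t>0$, together with a Hopf-type inequality $\partial_{-e} w_i^e > 0$ on $H(e)\cap B$. This follows from the parabolic strong maximum principle and Hopf lemma applied to the cooperative system, after a standard change of unknowns (subtracting a large multiple of $t$, or multiplying by $e^{-\lambda t}$) to make the zeroth-order terms have the right sign; some care is needed because $B(e)$ is only a half-ball/half-annulus and one must handle the Neumann part of the boundary, but interior and boundary Hölder/Schauder estimates together with Lemma~\ref{regularity} give enough regularity to justify this. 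Next, using the uniform bound \eqref{eq:12} and parabolic estimates, I would pass to limits along $t_n\to\infty$ to transfer these inequalities to the omega limit set: every $(z_1,z_2)\in\omega(u)$ satisfies $z_1\geq z_1\circ\sigma_e$ and $z_2\leq z_2\circ\sigma_e$ in $B(e)$, and — this is the key upgrade — for the limiting elliptic problem the strong maximum principle gives that either $z_i\equiv z_i\circ\sigma_e$ or the corresponding strict inequality holds throughout $B(e)$.

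The heart of the argument is then the rotation step. Define, on $\Sn$, the set $E$ of vectors $e$ for which the conclusion of the strong positivity propagates, i.e. for which $w_1^e,w_2^e\geq0$ and $\not\equiv0$ eventually; by a continuity/connectedness argument one shows that starting from the distinguished $e_0$ one can rotate $e$ continuously and the inequalities $w_1^e,w_2^e\geq 0$ are preserved as long as they do not become identities. The standard dichotomy (as in \cite{smets,saldana:weth}) is: either one can keep rotating until the reflection becomes trivial on $\omega(u)$ in \emph{some} direction, forcing the symmetry axis, or one finds a "maximal" direction $p$ such that every element of $\omega(u)$ is symmetric with respect to $H(p)$; iterating over a maximal family of mutually "compatible" hyperplanes through the axis $\R p$ then forces $z_1$ to be axially symmetric about $\R p$ and monotone in the polar angle, i.e. foliated Schwarz symmetric with respect to $p$, while the opposite sign in the $u_2$-inequality makes $z_2$ foliated Schwarz symmetric with respect to $-p$. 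The orientation must be chosen consistently along the rotation so that the \emph{same} $p$ works for all elements of $\omega(u)$ simultaneously; this uses the connectedness of $\omega(u)$.

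The main obstacle I anticipate is not the rotation bookkeeping (which is by now fairly routine) but establishing the strong maximum principle and the Hopf lemma for the limiting stationary problem on the half-domain under the \emph{mixed} Dirichlet–Neumann boundary condition, and doing so with only the regularity that the omega limit set inherits — elements of $\omega(u)$ are a priori merely continuous. One must either (i) show by parabolic regularity that the convergence $u_i(\cdot,t_n)\to z_i$ is strong enough in $C^1$ to apply classical elliptic maximum principles to $z_i - z_i\circ\sigma_e$, or (ii) work at the parabolic level throughout — propagating strict positivity for each finite $t_n$ and only then passing to the limit — which is cleaner but requires a uniform-in-$t$ version of the Hopf boundary estimate near $H(e)\cap\partial B$, where the Dirichlet face and the Neumann face meet. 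I would pursue route (ii), and the corner behavior there is the one genuinely delicate point; resolving it is what makes the Neumann case harder than the Dirichlet case of \cite{saldana:weth}.
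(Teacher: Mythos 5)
Your proposal correctly identifies the central structural point: with the sign convention $w_2^e := u_2\circ\sigma_e - u_2$, the pair $(w_1^e,w_2^e)$ solves a weakly coupled \emph{cooperative} linear parabolic system on $B(e)$ under mixed Dirichlet--Neumann boundary conditions; this is exactly system \eqref{linear:neumann} in the paper. Your ``route (ii)'' --- working at the parabolic level and proving a uniform Hopf-type estimate up to the corner $H(e)\cap\partial B$ --- is also the route the paper takes, via the Harnack--Hopf Lemma~\ref{hopf:lemma:Neumann} and the stability Lemma~\ref{perturbationlemma}, and the rotating-plane bookkeeping you sketch is packaged in the paper by Corollary~\ref{sec:symm-char} applied to the relatively open set $\cN$ of directions with eventual strict reflection inequalities.

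However, you have a genuine gap: you never address the \emph{semi-trivial limit profiles}, which the paper singles out in the introduction as the main new difficulty for the system case and which forces a new normalization argument. The perturbation step that drives the rotation needs, at a boundary direction $\hat e\in\partial\cN$, a lower bound of the form $\sup\{u_i^{\hat e}(x,t_n+t^*)\,:\,x\cdot\hat e\geq d\}\geq k>0$ for \emph{both} $i=1,2$ (condition (ii) of Lemma~\ref{perturbationlemma}). But competition can drive one component to extinction: $\omega(u)$ may well contain a profile $(z_1,0)$ or $(0,z_2)$. If $u_1(\cdot,t_n)\to 0$ uniformly then $u_1^{\hat e}(\cdot,t_n)\to 0$ too, so the lower bound fails for the first component and the rotation cannot be pushed past $\hat e$ by the argument you describe. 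Passing to a limiting elliptic problem (your route (i)) does not help either, because the inequality you would extract there is only the non-strict one. Your proposal gives no mechanism to rule out this scenario.

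The paper's resolution (Section~\ref{normalization:argument}, Lemma~\ref{normalization:lemma:2}) is to normalize the possibly vanishing component, $v_n:=u_1/\|u_1(\cdot,t_n)\|_{L^\infty(B)}$. This is made possible by the two-sided Harnack-type bound Lemma~\ref{normalization:lemma} (here the Neumann condition and the reflection-extension across $\partial B$ are used to avoid boundary decay, giving a stronger and simpler estimate than in the Dirichlet case). One then checks that $(v_n,u_2)$ satisfies, on $B(\hat e)$, a rescaled cooperative system of the same type, so the perturbation lemma applies to $(v_n^{\hat e},u_2^{\hat e})$. The argument then splits into two cases according to whether $\|v_n^{\hat e}\|_{L^\infty}$ stays bounded away from zero along the sequence: in the first case the perturbation lemma pushes $\hat e$ into $\cN$, contradicting $\hat e\in\partial\cN$; in the second case, where $v_n^{\hat e}\to 0$ uniformly, a duality argument (integrating the $v_n^{\hat e}$-equation against a cutoff supported where $u_2^{\hat e}$ is bounded below by $k_1>0$) produces both a positive lower bound and a vanishing upper bound for the same quantity, again a contradiction. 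This normalization plus dichotomy is precisely what is missing from your proposal, and without it the rotation step breaks down whenever a semi-trivial profile occurs.
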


As mentioned before, Theorem~\ref{corollary:Lotka:Volterra} is an
immediate consequence of Theorem~\ref{main:theorem:neumann}. As far as we know, there
is no previous result on the asymptotic symmetry of
competition-diffusion parabolic systems. For a related class of
Dirichlet problems for 
elliptic competing systems with a variational structure, Tavares and the second author proved
recently in \cite{weth:tavares} that the ground state solutions are
foliated Schwarz symmetric with respect to antipodal points. Note
that, in contrast, the elliptic counterpart of
(\ref{Lotka:Volterra:system}) has no variational structure which could
lead to symmetry information. 
More is known in the case of Dirichlet problems for {\em
  cooperative} systems. In particular, for a class of parabolic cooperative
systems, F\"{o}ldes and Pol\'{a}\v{c}ik 
\cite{polacik:systems} proved that, in the case where the underlying domain
is a ball, positive solutions are asymptotically radially symmetric and
radially decreasing. Moreover, for elliptic cooperative systems with
variational structure and some convexity properties of the data, Damascelli and Pacella \cite{pacella} proved foliated Schwarz symmetry of solutions having Morse index less or equal to the dimension of the domain. 

To prove Theorems \ref{main:theorem:scalar} and
\ref{main:theorem:neumann}, we follow the strategy of our previous
work \cite{saldana:weth} on a scalar Dirichlet problem, using a
rotating plane argument.  However, the proofs in \cite{saldana:weth}
rely strongly on parabolic maximum principles for small domains
due to Pol\'{a}\v{c}ik  \cite{polacik}, and these are only available under
Dirichlet boundary conditions. In the present paper, we replace this
tool by a Harnack-Hopf type estimate, 
Lemma~\ref{hopf:lemma:Neumann} below, which yields information up to the
nonsmooth part of the boundary of cylinders over half balls and half annuli. With the
help of this tool we show a stability property of reflection
inequalities with respect to small perturbations of a hyperplane, see
Lemma~\ref{perturbationlemma} below. 

The adjustment of the rotating plane method to systems gives rise to a further difficulty. When dealing with the
so-called semi-trivial limit profiles, that is, elements of
$\omega(u_1,u_2)$ of the form $(z,0)$ and $(0,z),$ the perturbation
argument within the rotating plane method cannot be carried out directly. To overcome this
obstacle, we apply a new normalization procedure and distinguish
different cases for the asymptotics of the normalized profile.  We
remark that a similar normalization argument can be made for the
Dirichlet problem version of system \eqref{model:competitive:neumann}.
In this case, the estimates given in \cite{huska:polacik:safonov} play
a decisive role, and the argument is somewhat more
technical.  To keep this paper short we do not include the Dirichlet case here.
We note that the occurrence and nature of semi-trivial limit profiles
have been studied extensively in recent years, see
e.g. \cite{cantrell,langa,dancer,lou,dancer:zhang}. 

It is natural to ask whether similar symmetry properties are available
for the cooperative version of problem \eqref{model:competitive:neumann}, i.e.,
\begin{equation}\label{model:cooperative:neumann}
\begin{aligned}
 (u_i)_t-\mu_i(|x|,t)\Delta u_i &=f_i(t,|x|,u_i)+\alpha_i(|x|,t) u_1u_2 && \text{in $B \times (0,\infty)$},\\
\partial_\nu u_i&=0&& \text{on $\partial B \times (0,\infty)$},\\
 u_i(x,0)&=u_{0,i}(x) \ge 0&& \text{for $x\in B,$ $i=1,2$.}
\end{aligned}
\end{equation}
Indeed, the proof of Theorem~\ref{main:theorem:neumann} can easily
be adjusted to deal with \eqref{model:cooperative:neumann}. More
precisely, we have the following result. 

\begin{theo}\label{main:theorem:neumann-cooperative}
Let $(h1)$--$(h3)$ be satisfied, and let $u_1,u_2\in C^{2,1}(\overline{B}\times(0,\infty))\cap
C(\overline{B}\times[0,\infty))$ be functions such that $u=(u_1,u_2)$ solves
\eqref{model:cooperative:neumann} and satisfies \eqref{eq:12}. Suppose
furthermore that 
$$
\leqno{\rm (h0)'} \qquad u_{0,i} \geq u_{0,i}
  \circ \sigma_e\ \ \  \text{ in }B(e) \ \ \ \ \left\{
  \begin{aligned}
&\text{for some $e \in \Sn$ with }\\
&\text{ $u_{0,i}\not\equiv u_{0,i}\circ \sigma_e$ for $i=1,2$.}    
  \end{aligned}
\right.   
$$
Then there is some $p\in \Sn$ such that every $(z_1,z_2)\in \omega(u)$ has
the property that $z_1,z_2$ are foliated Schwarz symmetric with respect to
$p$.
\end{theo}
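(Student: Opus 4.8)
\emph{Proof idea.} The plan is to mimic the proof of Theorem~\ref{main:theorem:neumann}, changing only the sign bookkeeping that accounts for the coupling. Fix $e\in\Sn$ and set $v_i(x,t):=u_i(\sigma_e(x),t)$ and $w^e_i:=u_i-v_i$ on $\overline{B(e)}\times(0,\infty)$. Since $B$, $\mu_i$ and $\alpha_i$ are invariant under $\sigma_e$, the pair $(v_1,v_2)$ again solves \eqref{model:cooperative:neumann} on $B(e)$; subtracting, linearizing $f_i$ with the help of (h1) and \eqref{eq:12} (so that $f_i(t,|x|,u_i)-f_i(t,|x|,v_i)=c_i(x,t)\,w^e_i$ with $c_i\in L^\infty$), and using the identity $u_1u_2-v_1v_2=u_2\,w^e_1+v_1\,w^e_2$, one finds that $(w^e_1,w^e_2)$ solves the linear system
\begin{align*}
(w^e_1)_t-\mu_1\Delta w^e_1&=(c_1+\alpha_1 u_2)\,w^e_1+\alpha_1 v_1\,w^e_2,\\
(w^e_2)_t-\mu_2\Delta w^e_2&=\alpha_2 u_2\,w^e_1+(c_2+\alpha_2 v_1)\,w^e_2
\end{align*}
in $B(e)\times(0,\infty)$, with $w^e_i=0$ on $H(e)\cap B$ and $\partial_\nu w^e_i=0$ on the curved part of $\partial B(e)$. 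The off-diagonal coefficients $\alpha_1 v_1$ and $\alpha_2 u_2$ are \emph{nonnegative}, because $u_i\ge 0$ (hence $v_i\ge 0$) and the coupling in \eqref{model:cooperative:neumann} carries a $+$ sign; thus the system is cooperative. This is the only genuine change with respect to Theorem~\ref{main:theorem:neumann}, where the $-$ sign of the competition term forces one to reflect the second component before a cooperative system is obtained, and where that reflection is precisely what turns the conclusion into ``$z_1$ foliated Schwarz symmetric with respect to $p$, $z_2$ with respect to $-p$''. Here no reflection is needed, which is why (h0)$'$ prescribes the same inequality for both components and the conclusion gives the same $p$.

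With the cooperative structure in hand, I would run the rotating plane argument exactly as for Theorem~\ref{main:theorem:neumann}. Taking $e=e_0$ from (h0)$'$ we have $w^{e_0}_i(\cdot,0)\ge 0$ and $\not\equiv 0$, so the maximum principle for cooperative parabolic systems together with the Harnack-Hopf estimate of Lemma~\ref{hopf:lemma:Neumann}, valid up to the nonsmooth edge where $H(e_0)\cap B$ meets $\partial B$, gives $w^{e_0}_i(\cdot,t)\ge 0$ in $B(e_0)$ for all $t>0$; hence $z_i\ge z_i\circ\sigma_{e_0}$ in $B(e_0)$ for every $(z_1,z_2)\in\omega(u)$ and $i=1,2$ (recall $\omega(u)$ is nonempty and compact by \eqref{eq:12} and Lemma~\ref{regularity}). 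Next I would set $\cM:=\{e\in\Sn: z_i\ge z_i\circ\sigma_e\text{ in }B(e)\text{ for all }(z_1,z_2)\in\omega(u),\ i=1,2\}$, which is nonempty and closed, and invoke the stability property of reflection inequalities, Lemma~\ref{perturbationlemma}, applied to the cooperative system above, to conclude that every $e$ in the relative boundary of $\cM$ in $\Sn$ satisfies $z_i\equiv z_i\circ\sigma_e$ for all $(z_1,z_2)\in\omega(u)$, $i=1,2$. A standard topological argument on $\Sn$ (using also that $e,-e\in\cM$ forces $\sigma_e$ to be a symmetry of every limit profile) then shows that $\cM$ is either all of $\Sn$ or a closed half-sphere $\{e\in\Sn: e\cdot p\ge 0\}$ for some $p\in\Sn$. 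In the first case every limit profile is radial, hence foliated Schwarz symmetric with respect to an arbitrary $p$; in the second, $z_i\ge z_i\circ\sigma_e$ in $B(e)$ whenever $e\cdot p\ge 0$, and by the characterization of foliated Schwarz symmetry (\cite{smets,wethsurvey}) this means precisely that $z_1$ and $z_2$ are both foliated Schwarz symmetric with respect to $p$.

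The step I expect to be the main obstacle, exactly as in Theorem~\ref{main:theorem:neumann}, is the treatment of the semi-trivial limit profiles $(z,0)$ and $(0,z)\in\omega(u)$ inside the rotating plane argument. When one component vanishes asymptotically, the associated off-diagonal coefficient ($\alpha_2 u_2$, respectively $\alpha_1 v_1$) degenerates to $0$ along the relevant sequence of times, so positivity of the reflection difference of the surviving component cannot be transferred to perturbed directions by a direct application of Lemma~\ref{perturbationlemma}. To get around this I would reuse the normalization procedure from the proof of Theorem~\ref{main:theorem:neumann}: divide the vanishing component by its $L^\infty$-norm along the approximating times, pass to a limit of the normalized profile, and split into cases according to whether this limit vanishes on a half-domain or satisfies a strict reflection inequality. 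The cooperative sign pattern keeps each of these cases no harder than in the competitive setting, and once the semi-trivial profiles are dealt with, the rotating plane argument closes as above.
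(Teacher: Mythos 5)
Your proposal is essentially correct and follows the same route as the paper. The decisive observation is the one you isolate: since the coupling now carries a $+$ sign and the components are nonnegative, there is no need to reflect the second component before forming differences, so one takes $u_i^e:=u_i-u_i\circ\sigma_e$ for \emph{both} $i=1,2$, and the linearization still produces a weakly coupled cooperative system (your identity $u_1u_2-v_1v_2=u_2\,w_1^e+v_1\,w_2^e$ and the resulting system are correct). This is exactly the ``first change'' the paper makes; the ``second change'' is that the auxiliary sets become $\cU=\omega(u_1)\cup\omega(u_2)$ and $\cN=\{e:\exists T>0,\ u_i^e>0\text{ in }B(e)\times[T,\infty),\ i=1,2\}$, after which the whole machinery of Section~\ref{normalization:argument} (Harnack--Hopf, perturbation lemma, normalization for semi-trivial profiles, Corollary~\ref{sec:symm-char}) goes through unchanged and yields a single $p$, precisely as you predict.

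Two places where your sketch is looser than the paper, and where as written there is a small gap you would have to close. First, you run the contradiction argument off $\partial\cM$ (the boundary of the closed set $\cM=\cM_\cU$), but the perturbation lemma~\ref{perturbationlemma} is formulated for the time-dependent differences $u_i^e$, not for limit profiles; membership in $\cM_\cU$ only controls the $\omega$-limit set, and does not by itself give the positivity ``$u_i^e(\cdot,t)\ge 0$ for $t\ge T$'' that hypothesis~(i) of Lemma~\ref{perturbationlemma} requires. The paper works with the \emph{open} subset $\cN\subset\cM_\cU$ precisely so that, for $\hat e\in\partial\cN$, nearby $e\in\cN$ provide the needed sign information on $u_i^e$, and then Corollary~\ref{sec:symm-char} converts ``boundary of $\cN$ gives equality'' into foliated Schwarz symmetry. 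Second, the claim that ``$\cM$ is either $\Sn$ or a closed half-sphere'' is not established by what precedes it and is not needed; the paper replaces this ad hoc topological step by the two-dimensional-section argument of Proposition~\ref{sec:char-foli-schw-1}/Corollary~\ref{sec:symm-char}, which is the precise form of the geometric characterization you are appealing to. With $\partial\cM$ replaced by $\partial\cN$ and the last step run through Corollary~\ref{sec:symm-char}, your argument becomes the paper's proof.

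Your discussion of semi-trivial profiles is on target in spirit: the normalization $v_n=u_1/\|u_1(\cdot,t_n)\|_{L^\infty}$ combined with Lemma~\ref{normalization:lemma} is exactly the paper's device, though the case split in the paper is on $\limsup_n\|v_n^{\hat e}\|_{L^\infty}$ (whether the \emph{reflection difference} of the normalized function degenerates), not on whether the normalized limit ``vanishes on a half-domain''.
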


The paper is organized as follows. In Section \ref{technical:lemmas}
we collect some preliminary tools which are rather easy consequences
of already established results. In Section~\ref{hopflemma} we 
derive a Harnack-Hopf type estimate for scalar equations in
a half cylinder under mixed boundary conditions, a
related version of the Hopf Lemma for cooperative systems and a perturbation
lemma for hyperplane reflection inequalities. In Section
\ref{results:scalar:equations} we complete the proof of
Theorem~\ref{main:theorem:scalar}, and in
Section~\ref{normalization:argument} we complete the (more difficult)
proof of Theorem~\ref{main:theorem:neumann}.
In Section~\ref{sec:other:problems} we first provide the proof of 
Theorem~\ref{main:theorem:neumann-cooperative} and then briefly
discuss further classes of competitive and cooperative systems
(see \eqref{cubic:system} and \eqref{general:cooperative:model} below).\\

\noindent \textbf{Acknowledgements:}
The work of the first author is supported by a joint grant
from CONACyT (Consejo Nacional de Ciencia y Tecnolog\'{\i}a - Mexico)
and DAAD (Deutscher Akademischer Austausch Dienst - Germany). The
authors would like to thank Nils Ackermann, Sven Jarohs, Filomena Pacella, Peter
Pol\'{a}\v{c}ik, and Hugo Tavares for helpful discussions related to the paper.
The authors also wish to thank the referee for his/her helpful comments.  

\section{Preliminaries}
\label{technical:lemmas}
First we fix some notation. Throughout the paper, we assume that $B$ is a ball or an annulus in
$\R^N$ centered at zero, 
and we fix $0 \le A_1 <A_2< \infty$ such that 
\begin{equation}\label{B:definition}
B:= \begin{cases}
\{x\in\mathbb R^N: A_1<|x|<A_2\}, & \text{ if } A_1>0,\\
 \{x\in\mathbb R^N: |x|<A_2\}, & \text{ if } A_1=0.
\end{cases}
\end{equation}
Note that $I_B=[A_1,A_2]$. For $\Omega\subset\mathbb R^N,$ we
let $\Omega^\circ$ denote the interior of $\Omega.$
For two sets $\Omega_1,\Omega_2 \subset \R^N$, we put
$\dist(\Omega_1,\Omega_2):= \inf \{|x-y|\::\: x \in \Omega_1,\,y \in
\Omega_2\}$. If $\Omega_1= \{x\}$ for some $x \in \R^N$, we simply
write $\dist(x,\Omega_2)$ in place of $\dist(\{x\},\Omega_2)$.

We will need equicontinuity properties of uniformly bounded global
solutions of (\ref{model:scalar}) and
(\ref{model:competitive:neumann}) and their gradients. These
properties are derived from standard uniform regularity estimates as collected in the following 
lemma.

\begin{lemma}\label{regularity} Let $\Omega \subset \mathbb R^{N}$ be
  a smooth bounded domain, $I\subset \R$ open, $\mu\in
  C^{1}(\Omega\times I),$ $g\in L^\infty(\Omega\times I),$ and let
  $v\in C^{2,1}(\overline{\Omega}\times I)\cap C(\overline{\Omega \times I})$ be a classical solution of
\begin{equation*}
\begin{aligned}
v_t-\mu(x,t)\Delta v&=g(x,t) &&\qquad \text{in $\Omega \times I$},\\
\partial_\nu v&=0 &&\qquad \text{on $\partial \Omega \times I$.}
\end{aligned}
\end{equation*}
Suppose moreover that 
\begin{equation*}
\begin{aligned}
&\mu_*:= \inf_{\Omega \times I} \mu(x,t)>0,\\
&K:= \|v\|_{L^\infty(\Omega\times I)}+\|\mu\|_{C^1(\Omega\times I)}+\|g\|_{L^\infty(\Omega\times I)}<\infty.
\end{aligned}
\end{equation*}
Let $h\in \{v,v_{x_j}:j=1,\ldots,N\}$ and ${\cal I}\subset I$
with $\operatorname{dist}({\cal I},\partial I)\geq 1$. Then there exist positive constants $C$ and $\gamma,$ depending only on $\Omega,$ $\mu_*,$ and $K,$ such that 
\begin{align}\label{equicontinuity:h}
\sup_{\genfrac{}{}{0pt}{}{\scriptstyle{x,\bar x\in
        \overline{\Omega},\, t,\bar t\in[t_0,t_0+1],}}{\scriptstyle{x \not= \bar x,\, t \not=
        \bar t,\, {t_0\in {\cal I}}}}}
 \frac{|h(x,t)-h(\bar x,\bar t)|}{|x-\bar x|^\gamma+|t-\bar
  t|^{\frac{\gamma}{2}}}< C.
\end{align}
\end{lemma}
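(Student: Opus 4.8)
The plan is to obtain \eqref{equicontinuity:h} from the standard parabolic Schauder and $W^{2,1}_p$ theory, localized appropriately and then patched via a covering argument. First I would freeze the coefficient: rewrite the equation as $v_t - \mu_*\Delta v = g + (\mu - \mu_*)\Delta v$ is not yet useful since $\Delta v$ is not controlled a priori, so instead I would keep the nondivergence form $v_t - \mu(x,t)\Delta v = g$ and apply interior and boundary $L^p$-estimates for nondivergence parabolic operators with bounded measurable (in fact $C^1$, hence continuous) leading coefficient $\mu$, which is uniformly elliptic by $\mu \ge \mu_*$. Concretely, fix $t_0 \in {\mathcal I}$; since $\dist({\mathcal I},\partial I)\ge 1$, the cylinder $\overline{\Omega}\times[t_0-1,t_0+1]$ lies in $\Omega\times I$. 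On $\overline{\Omega}\times[t_0-\tfrac12,t_0+1]$, the global $W^{2,1}_p$-estimate up to the boundary (using that $\partial\Omega$ is smooth and the boundary condition is the conormal/Neumann one $\partial_\nu v = 0$) gives $\|v\|_{W^{2,1}_p(\Omega\times(t_0-\frac12,t_0+1))} \le C\big(\|g\|_{L^\infty} + \|v\|_{L^\infty}\big)$ with $C = C(\Omega,\mu_*,\|\mu\|_{C^1},p)$, for any finite $p$. Choosing $p > N+2$ and invoking the parabolic Sobolev embedding $W^{2,1}_p \hookrightarrow C^{1+\beta,\frac{1+\beta}{2}}$ with $\beta = 1 - \frac{N+2}{p}$ yields a uniform H\"older bound for $v$ and $\nabla v$ on $\overline{\Omega}\times[t_0,t_0+1]$; this already handles $h = v$ and, for $h = v_{x_j}$, gives the $C^{\beta,\beta/2}$-seminorm bound with constants independent of $t_0 \in {\mathcal I}$.

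The one subtlety is that $v_{x_j}$ does not itself satisfy a nice Neumann problem, so I would not differentiate the equation; instead the $W^{2,1}_p$ bound on $v$ directly controls $\nabla v$ in the parabolic H\"older class, which is exactly what \eqref{equicontinuity:h} asks for with $\gamma := \beta$. Thus the order of steps is: (i) localize in time using $\dist({\mathcal I},\partial I)\ge 1$ so that a unit-length buffer is available on both sides; (ii) apply the interior plus boundary $W^{2,1}_p$-estimate for the nondivergence operator $\partial_t - \mu\Delta$ under the conormal boundary condition, absorbing the dependence on $\Omega$, $\mu_*$, $\|\mu\|_{C^1}$, and $\|v\|_{L^\infty} + \|g\|_{L^\infty}$ into a single constant; (iii) fix $p > N+2$ and embed into $C^{1+\gamma,\frac{1+\gamma}{2}}$; (iv) read off the bound for $h \in \{v, v_{x_j}\}$, noting all constants are uniform over $t_0 \in {\mathcal I}$ since the estimate on each buffered cylinder depends only on the stated data. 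A standard compactness/covering argument over $\overline{\Omega}$ (finitely many interior and boundary charts, $\Omega$ bounded) makes the local-to-global passage, and one takes $C$ slightly larger than the supremum of the finitely many local constants to get the strict inequality in \eqref{equicontinuity:h}.

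The main obstacle is bookkeeping the boundary estimate: one must use the precise form of the parabolic $L^p$-theory up to a smooth boundary for the conormal (here Neumann, since $\mu$ is scalar) condition, and verify that its constant depends on $\mu$ only through $\mu_*$ and $\|\mu\|_{C^1}$ — the $C^1$-regularity (rather than mere continuity) of $\mu$ is what guarantees the modulus of continuity of the leading coefficient is controlled quantitatively, which is needed for the estimate's constant to be uniform. Once that citation is in place, everything else is the routine embedding and covering argument sketched above; no maximum principle or Harnack inequality is needed for this lemma, only linear regularity theory.
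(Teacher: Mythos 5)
Your proposal takes essentially the same route as the paper: the paper also applies a global parabolic $W^{2,1}_p$-estimate up to the boundary for the Neumann problem on a unit-length time slab (citing Lieberman, Theorem~7.35, with $p=N+3$), then uses the parabolic Sobolev embedding $W^{2,1}_{N+3}\hookrightarrow C^{1+\gamma,(1+\gamma)/2}$ for $\gamma<1-\tfrac{N+2}{N+3}$ to control both $v$ and $\nabla v$, exactly as you do with a generic $p>N+2$. Your remarks about not differentiating the equation, and about needing the time buffer from $\dist(\mathcal I,\partial I)\ge 1$, match what is implicitly used; the explicit covering argument you sketch is already absorbed into the cited global $L^p$-estimate, so the two proofs coincide in substance.
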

\begin{proof}
Fix $t_0\in {\cal I}$ and set $Q:=\overline{\Omega}\times[t_0,t_0+1].$ Then, by \cite[Theorem 7.35, p.185]{lieberman} there is a constant $C>0,$ which depends only on $\Omega,$ $\mu_*,$ and $K$ such that
\begin{align*}
 \|D^2 v\|_{L^{N+3}(Q)}+\|v_t\|_{L^{N+3}(Q)}&\leq C(\|g\|_{L^{N+3}(Q)}+\|v\|_{L^{N+3}(Q)})\leq 2 C|\Omega|K.
\end{align*}
In particular, there is some constant $\tilde K>0$ independent of $t_0$ such that
\begin{align*}
 \|v\|_{W_{N+3}^{2,1}(Q)}\leq \tilde K.
\end{align*}
Next, fix $0 < \gamma < 1-\frac{N+2}{N+3}\in(0,1).$ By Sobolev embeddings (see, for example, \cite[embedding (1.2)]{quittner-souplet}) there exists  a constant $\tilde C>0$ which only depends on $\Omega$ such that
\begin{equation}
\label{regularity-new-equation}
 \|v\|_{C^{1+\gamma,(1+\gamma)/2}(Q)}\leq \tilde C \|v\|_{W_{N+3}^{2,1}(Q)}\leq \tilde C \tilde K,
\end{equation}
where 
$$
\|u\|_{C^{1+\gamma,(1+\gamma)/2}(Q)}:= \|u\|_{L^\infty(Q)}+ |u|_{\gamma;Q}+ \|\nabla u\|_{L^\infty(Q)}+ |\nabla u|_{\gamma;Q}
$$
and 
$$
|v|_{\gamma;Q}:= \sup\bigg\{ \frac{|v(x,t)-v(y,s)|}{|x-y|^\gamma+|t-s|^\frac{\gamma}{2}}\::\: (x,t),(y,s)\in \overline{Q},\ (x,t)\neq (y,s)\bigg\} 
$$
for functions $v: Q \to \R$ resp. $v: Q \to \R^N$. Since the constant $\tilde C \tilde K$ in (\ref{regularity-new-equation}) does not depend on the choice
of $t_0,$ we obtain \eqref{equicontinuity:h}.
\end{proof}

\begin{remark}\label{equicontinuity}
If $u=(u_1,u_2)$ is a nonnegative solution of \eqref{model:competitive:neumann}
 with $u_1,u_2\in C^{2,1}(\overline{B}\times(0,\infty))\cap
C(\overline{B}\times[0,\infty))$, then $u_i$ satisfies
\begin{equation*}
 (u_i)_t-\mu_i(|x|,t)\Delta u_i = g_i(x,t),\qquad x\in B,\ t>0,
\end{equation*}
with $g_i: B \times (0,\infty) \to \R$ given by 
$$
g_i(x,t)=f_i(t,|x|,u_i(x,t))-\alpha_i(|x|,t)u_1(x,t)u_2(x,t)\qquad \text{for $i=1,2$.}
$$
If, moreover, (h1)-(h3) and (\ref{eq:12}) are satisfied, then we have 
$\|u_i\|_{L^\infty(B \times (0,\infty))}<\infty$ and 
$\|g_i\|_{L^\infty(B \times (0,\infty))}< \infty$ for $i=1,2$. Since also the diffusion coefficients $\mu_i$ satisfy the
assumptions of Lemma~\ref{regularity}, we conclude that $u_i$ and $\partial_j u_i$ for all
$j=1,\dots,N,$ $i=1,2,$ satisfy \eqref{equicontinuity:h} with $\cI=(1,\infty).$ 
As a consequence of \eqref{equicontinuity:h}, the
semiorbits $\{u_i(\cdot,t):t\geq 1\}$ are precompact sets for
$i=1,2$. Hence, by a standard
compactness argument, the omega limit sets of $u=(u_1,u_2)$ and its
components are related as follows:
\begin{equation}
  \label{eq:3}
\omega(u_i)= \{z_i \::\: z=(z_1,z_2) \in \omega(u)\} \qquad \text{for $i=1,2$.}   \end{equation}
\end{remark}
\medskip

Next we define extensions of solutions to second order Neumann
problems on $B$ to a larger domain via inversion at
the boundary. Recalling (\ref{B:definition}), we define 
\begin{equation}
\label{eq:14}
\widetilde B:= \begin{cases}
\{x\in\mathbb R^N: \frac{A_1^2}{A_2}<|x|<\frac{A_2^2}{A_1}\}, & \text{ if } A_1>0,\\
 \{x\in\mathbb R^N: |x|<2A_2\}, & \text{ if } A_1=0,
\end{cases}
\end{equation}
and for $x \in \widetilde B \setminus B$ we put 
$$
\hat x := 
\begin{cases}
\frac{A_2^2}{|x|^2}x, & \text{ if $|x| \ge A_2$,}\\
\frac{A_1^2}{|x|^2}x, & \text{ if $|x| \le A_1$.}
\end{cases}
$$

\begin{lemma} \label{extension:lemma} 
Let $I \subset \R$ be an open interval, $\mu,g: B\times I\to \R$ be
given functions and let $u\in C^{2,1}(\overline{B}\times I)\cap
C(\overline{B \times I})$ be a solution of 
$$
\left \{
  \begin{aligned}
u_t-\mu(x,t)\Delta u &= g(x,t)&& \qquad \text{in $B \times I$,}\\
 \partial_\nu u &=0 &&\qquad \text{on $\partial B \times I.$}
  \end{aligned}
\right.
$$
Then the function 
\begin{equation}
  \label{eq:16}
\tilde u: \widetilde B \to \R,\qquad 
\tilde u(x,t):= 
\begin{cases}
 u(x,t), & x\in \overline{B},\ t\in I,\\\vspace{.1cm}
 u(\hat x,t), & x\in \widetilde B \setminus \overline {B},\ t\in I,
\end{cases}
\end{equation}
satisfies that $\tilde u\in W^{2,1}_{p,loc}(\widetilde B\times I)\cap
C^{1}(\widetilde B\times I)$ for any $p\geq 1$, and it is a strong solution of the equation
\begin{align}
\label{extended:neumann:system}
{\tilde u}_t-\tilde \mu(x,t)\Delta \tilde u-\tilde b(x,t) \partial_r \tilde u &= \tilde g(x,t) \qquad \text{ in } \widetilde B\times I.
 \end{align}
Here $\partial_r = \frac{1}{|x|} \sum \limits_{j=1}^N x_j \partial_j$ is the
radial derivative and 
\begin{align*}
\tilde \mu(x,t)&:=\begin{cases}
\mu(x,t), & \qquad x\in B,\ t\in I, \\\vspace{.1cm}
\frac{|x|^2}{|\hat x|^2}\mu(\hat x,t\big), &\qquad x\in\widetilde B
\setminus B,\ t\in I,
\end{cases}\\
\tilde b(x,t)&:=\begin{cases}
0, & x\in B,\ t\in I\\\vspace{.1cm}
 \frac{(4-2N)|x|}{|\hat x|^2} \mu(\hat x,t), & x\in\widetilde B
  \setminus B,\ t\in I,
\end{cases}\\
\tilde g(x,t)&:=\begin{cases}
g(x,t), & \qquad \qquad x\in B,\ t\in I,\\\vspace{.1cm}
g(\hat x,t), & \qquad \qquad x\in\widetilde B \setminus B,\ t\in I.
\end{cases}
\end{align*}
\end{lemma}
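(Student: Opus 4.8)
The plan is to verify all assertions in a neighbourhood of each smooth component of $\partial B$, since elsewhere $\tilde u$ is visibly of class $C^{2,1}$: it equals $u$ on $B\times I$ (where \eqref{extended:neumann:system} reduces to the given equation, with $\tilde b\equiv0$, $\tilde\mu=\mu$, $\tilde g=g$), and it equals $u\circ\Phi$ on $(\widetilde B\setminus\overline B)\times I$, where $\Phi$ is the smooth inversion $x\mapsto\frac{A_2^2}{|x|^2}x$ on the outer part and $x\mapsto\frac{A_1^2}{|x|^2}x$ on the inner part (in the ball case only the outer part occurs, and there is no issue at the origin). So I would fix one boundary sphere, say $\{|x|=A_2\}$, write $R:=A_2$, and work in a neighbourhood $\{R-\delta<|x|<R+\delta\}$; the inner sphere is treated identically with $R=A_1$.

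On $\{r>R\}$, writing points in polar coordinates $(r,\omega)=(|x|,x/|x|)$ and using $\Delta=\partial_{rr}+\frac{N-1}{r}\partial_r+\frac1{r^2}\Delta_{\Sn}$, I would compute by the chain rule that $\partial_r\tilde u=-\frac{R^2}{r^2}(\partial_r u)(\hat x,t)$ and, collecting terms and using $|\hat x|^2=R^4/|x|^2$, that $\Delta\tilde u=\frac{|\hat x|^2}{|x|^2}(\Delta u)(\hat x,t)-\frac{4-2N}{|x|}\,\partial_r\tilde u$ on $(\widetilde B\setminus\overline B)\times I$. Multiplying this identity by $\tilde\mu=\frac{|x|^2}{|\hat x|^2}\mu(\hat x,t)$ and inserting $\mu(\hat x,t)(\Delta u)(\hat x,t)=(u_t-g)(\hat x,t)=\partial_t\tilde u-\tilde g$ (the equation satisfied by $u$ at $\hat x$, together with $\partial_t\tilde u=(u_t)(\hat x,t)$ and $\tilde g=g(\hat x,t)$) reproduces exactly \eqref{extended:neumann:system} with $\tilde\mu,\tilde b,\tilde g$ as in the statement. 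Since $u\in C^{2,1}(\overline B\times I)$ and $\Phi$ is smooth up to $\{|x|=R\}$, $\tilde u$ is in fact $C^{2,1}$ up to $\partial B\times I$ from the outside; together with the classical regularity on $B\times I$, $\tilde u$ is thus piecewise $C^{2,1}$ with smooth interface $\partial B\times I$, solving \eqref{extended:neumann:system} classically on each side.

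It remains to glue across $\partial B\times I$, and this is the only point where the Neumann condition is used. On $\{|x|=R\}$ one has $\hat x=x$, so $\tilde u$, its tangential (spherical) derivatives, and $\partial_t\tilde u$ match automatically; for the radial derivative the inner limit is $\partial_r u|_{\partial B}=\partial_\nu u|_{\partial B}=0$, while the outer limit is $-\frac{R^2}{r^2}(\partial_r u)(\hat x,t)|_{r=R}=-\partial_r u|_{\partial B}=0$, so both vanish and therefore agree. Hence $\tilde u\in C^1(\widetilde B\times I)$. Finally, a $C^1$ function that is piecewise $C^{2,1}$ across a smooth interface with coinciding first-order derivatives has distributional derivatives $\partial_t\tilde u$ and $D^2_x\tilde u$ equal to its a.e.\ defined classical pieces (no singular term on $\partial B\times I$, precisely because the first-order traces match), so $\tilde u\in W^{2,1}_{p,loc}(\widetilde B\times I)$ for every $p\ge1$, and \eqref{extended:neumann:system}, which holds classically on each side, holds a.e.; that is, $\tilde u$ is a strong solution. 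The only genuinely delicate step is the polar-coordinate computation, i.e.\ getting the coefficient $\tilde\mu$ and especially the first-order term $\tilde b\,\partial_r\tilde u$ right; the $C^1$-gluing and the $W^{2,1}_{p,loc}$ conclusion are routine, the conceptual point being that the Neumann condition is exactly what offsets the sign change of the radial derivative under the inversion.
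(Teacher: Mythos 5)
Your proof is correct and follows essentially the same approach as the paper's: verify the transformed equation by direct chain-rule/polar-coordinate computation on each side of $\partial B\times I$, use the Neumann condition to obtain the $C^1$-gluing across the interface, and deduce $W^{2,1}_{p,\mathrm{loc}}$ regularity from piecewise regularity plus matching first-order traces. The paper's proof is merely terser (it declares the calculation to be ``easy to check'' rather than carrying it out, and it invokes boundedness of the first and second derivatives of $x\mapsto\hat x$ in place of your explicit polar-coordinate identity for $\Delta\tilde u$), but the ingredients and their roles are identical.
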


\begin{proof} As a consequence of the Neumann boundary conditions we have 
$\tilde u\in C^{1}(\widetilde{B} \times I)$. Fix $p\geq 1.$ By assumption, $\|u\|_{W_{p}^{2,1}(B\times J)}<\infty$ for any subinterval $J\subset\subset I.$ Since the map $x \mapsto \hat x$ has uniformly
bounded first and second derivatives in $\tilde B \setminus B$, it follows that $\|\tilde u\|_{W_{p}^{2,1}(\tilde B\times
  J)}<\infty.$ Finally, it is easy to check by direct calculation that
\eqref{extended:neumann:system} holds for $x\in \widetilde B
\setminus \partial B$ and $t\in I$. Combining these facts,
we find that $\tilde u$ is a strong solution of (\ref{extended:neumann:system}). 
\end{proof}

\begin{remark}\label{extension:remark}
A similar extension property is valid in half balls and half annuli under mixed
boundary conditions. More precisely, let $B_+:= \{x \in \overline
B\::\:x_N>0\}$, $I \subset \R$ be an open interval, let $\mu,g:B_+\times I\to R$ given functions and let $u\in C^{2,1}(\overline{B_+}\times I)\cap
C(\overline{B_+\times I})$ be a solution of 
$$
u_t-\mu(x,t)\Delta u = g(x,t) \qquad \text{in $B_+^\circ \times I$,}\\
$$
satisfying $u=0$ on $\Sigma_1\times I$ and 
$\partial_\nu u =0$ on $\Sigma_2\times I,$ where 
\begin{equation}
  \label{eq:19}
\Sigma_1 := \{x\in\partial B_+ : x_N=0\}, \qquad \Sigma_2 :=
\{x\in\partial B_+ : x_N>0\}.
\end{equation}
Let $\widetilde {B_+}:= \{x \in \widetilde B\::\: x_N>0\}$ and define
$\tilde u: \widetilde {B_+} \to \R$ by (\ref{eq:16}) for $x \in
\widetilde {B_+}$. Then $\tilde u\in W^{2,1}_{p,loc}(\widetilde {B_+}\times I)\cap
C^{1}(\widetilde {B_+} \times \overline{I})$ for any $p>N+2$ and it is a strong solution of
(\ref{extended:neumann:system}) in $\widetilde {B_+}$ with
coefficients defined analogously as in Lemma~\ref{extension:lemma}.
\end{remark}

The final preliminary tool we need is a geometric characterization of a set of foliated Schwarz
symmetric functions. We first recall the following result from \cite[Proposition 3.2]{saldana:weth}.

\begin{prop}
\label{sec:char-foli-schw-1}
 Let $\cU$ be a set of continuous functions defined on a radial domain $B\subset
 \mathbb R^N,$ $N\geq 2,$ and suppose that there exists 
 \begin{equation}
   \label{eq:5}
\tilde e\in \cM_\cU:=\{e\in \Sn \mid z(x) \ge z(\sigma_e(x)) \text{ for all }x\in B(e) \text{ and } z \in \cU\}.
 \end{equation}
If for all two dimensional subspaces $P\subseteq
 \mathbb R^N $ containing $\tilde e$ there are two different points
 $p_1, \ p_2$ in the same connected component of $\cM_\cU\cap P$ such that
 $z \equiv z \circ \sigma_{p_1}$ and $z \equiv z \circ
 \sigma_{p_2}$ for every $z \in \cU$, then there is $p
 \in \Sn$ such that every $z \in \cU$ is foliated Schwarz symmetric with
 respect to $p$.
\end{prop}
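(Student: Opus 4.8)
The plan is to reduce the assertion to showing that $\cM_\cU$ contains a closed half-sphere $H_p:=\{e\in\Sn:e\cdot p\ge 0\}$ for some $p\in\Sn$, which is equivalent to every $z\in\cU$ being foliated Schwarz symmetric with respect to $p$ (cf.\ \cite{wethsurvey}). The implication we need, that $\cM_\cU\supseteq H_p$ forces foliated Schwarz symmetry, is checked directly: if $q\perp p$ then $\pm q\in H_p\subseteq\cM_\cU$, so every $z\in\cU$ is invariant under all reflections in hyperplanes containing $\R p$, hence axially symmetric about $\R p$; and if $e\cdot p\ge 0$ and $x\in B(e)$, then $|\sigma_e(x)|=|x|$ and $\sigma_e(x)\cdot p=x\cdot p-2(x\cdot e)(e\cdot p)\le x\cdot p$, so $\sigma_e$ does not decrease the polar angle, which together with $z\ge z\circ\sigma_e$ on $B(e)$ forces $z$ to be nonincreasing in the polar angle. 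So it suffices to produce such a $p$.

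Write $\cM:=\cM_\cU$ and $\mS:=\{q\in\Sn: z\equiv z\circ\sigma_q\text{ for all }z\in\cU\}$; one checks $\mS=\cM\cap(-\cM)$, and $\tilde e\in\cM\neq\emptyset$. Two structural facts will be used. First, $\cM$ is closed in $\Sn$, by continuity of the elements of $\cU$ and of $(e,x)\mapsto\sigma_e(x)$. Second, $\cM$ is invariant under $\sigma_q$ for every $q\in\mS$: for $e\in\cM$ and $x\in B(\sigma_q(e))$ we have $\sigma_q(x)\in B(e)$, and since $z=z\circ\sigma_q$ and $\sigma_{\sigma_q(e)}=\sigma_q\sigma_e\sigma_q$, it follows that $z(x)=z(\sigma_q(x))\ge z(\sigma_e\sigma_q(x))=z(\sigma_{\sigma_q(e)}(x))$ for all $z\in\cU$, so $\sigma_q(e)\in\cM$. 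Hence $\cM$ and $\mS$ are invariant under the subgroup $G\le O(N)$ generated by $\{\sigma_q:q\in\mS\}$; in particular, once we know $p^\perp\cap\Sn\subseteq\mS$ for some $p$, the set $\cM$ is invariant under all rotations fixing $\R p$.

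The core of the argument is carried out in the two-dimensional slices. Fix a two-dimensional subspace $P\ni\tilde e$ and identify $P\cap\Sn$ with a circle. By hypothesis, some connected component $K$ of the closed set $\cM\cap P$ contains two distinct points $p_1,p_2\in\mS$; being a closed connected subset of a circle, $K$ is an arc (or all of $P\cap\Sn$) and contains an arc $A$ joining $p_1$ and $p_2$, so $A\subseteq\cM$. By the previous paragraph, $\sigma_{p_1}$ and $\sigma_{p_2}$ map $P$ onto itself and preserve $\cM\cap P$, and so does the rotation $\sigma_{p_1}\sigma_{p_2}$. Iterating these on $A$ and using that $\cM\cap P$ is closed, the aim is to conclude that $\cM\cap P$ contains a closed half-circle of $P$, which — since $\tilde e\in\cM\cap P$ — may be taken to pass through $\tilde e$. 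I expect this to be the main obstacle: one must exclude that the reflected copies of $A$ fill out only finitely many short, pairwise disjoint arcs that cover no half-circle. I would deal with it by first showing that if $\cM\cap P$ contains any arc of positive length then it already contains a closed half-circle of $P$ — a reflection inequality $z\ge z\circ\sigma_e$ on $B(e)$ holding simultaneously for all $e$ in an arc of directions is a continuous family of polarizations, and this propagates enough monotonicity of the functions along the great circle to span a whole half-circle — so that the reflections $\sigma_{p_1},\sigma_{p_2}$ serve only to reposition the half-circle through $\tilde e$. (If a two-dimensional version of the proposition is available as a separate lemma, one may instead apply it to $\cM\cap P$ directly.)

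Finally the slices must be glued. Granting a closed half-circle $H_P\subseteq\cM\cap P$ through $\tilde e$ for every two-plane $P\ni\tilde e$, observe that every point of $\Sn$ lies on such a plane, so if all the $H_P$ can be chosen with one common pole $p$ — that is, of the form $\{v\in P\cap\Sn:v\cdot p\ge 0\}$ — then $\bigcup_P H_P=H_p\subseteq\cM$ and the first paragraph finishes the proof. To obtain a common pole, I would use that the two endpoints of each $H_P$ lie in $\mS=\cM\cap(-\cM)$, together with the $G$-invariance and closedness of $\mS$ and $\cM$, and run a connectedness argument on the space of two-planes through $\tilde e$; equivalently, one shows that the open set $\Sn\setminus\cM$, which avoids $\tilde e$ and whose intersection with each two-plane through $\tilde e$ lies in an open half-circle, must lie in an open hemisphere. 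This gluing also requires some care, but is more elementary than the two-dimensional step. The degenerate case $\cM=\Sn$, in which every $z\in\cU$ is radial, is trivial.
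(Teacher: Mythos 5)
The paper does not prove this proposition here; it simply recalls it from \cite[Proposition 3.2]{saldana:weth}, so there is no in-text proof to compare against. Judged on its own, your first two paragraphs are sound: the equivalence ``$\cM_\cU \supseteq \{e\in\Sn : e\cdot p\ge 0\}$ $\Leftrightarrow$ every $z\in\cU$ is foliated Schwarz symmetric with respect to $p$'' is correct, as are the closedness of $\cM_\cU$ and its invariance under $\sigma_q$ for $q\in\mS$ via $\sigma_{\sigma_q(e)}=\sigma_q\sigma_e\sigma_q$. The final ``gluing'' paragraph is left as a sketch, but the decisive problem is in the middle.

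The key step you defer --- ``if $\cM_\cU\cap P$ contains any arc of positive length then it already contains a closed half-circle of $P$'' --- is, as far as I can see, simply false, and the heuristic offered does not save it. Having $z(\phi+s)\ge z(\phi+\pi-s)$ for all $s\in(-\tfrac{\pi}{2},\tfrac{\pi}{2})$ and all $\phi$ in an arc $[a,b]$ of length $\ell:=b-a<\pi$ does propagate monotonicity by composing two reflections, but only over arcs of aperture $\ell$: one gets that $z$ (restricted to circles of $P$) is nonincreasing on $[a+\tfrac{\pi}{2},\,b+\tfrac{\pi}{2}]$ and nondecreasing on $[a-\tfrac{\pi}{2},\,b-\tfrac{\pi}{2}]$, and nothing is controlled on the two complementary arcs of length $\pi-\ell$. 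Thus for a single $z$ one can easily have $\cM_{\{z\}}\cap P$ equal to a short arc rather than a half-circle, so the claim cannot be proved without invoking the symmetry points $p_1,p_2\in\mS$. These are not there merely to ``reposition'' the half-circle; they are what makes the slice argument work (e.g.\ one must use that $\sigma_{p_1},\sigma_{p_2}$ and the rotation $\sigma_{p_1}\sigma_{p_2}$ all preserve $\cM_\cU\cap P$, and run a counting/pigeonhole argument on the dihedral orbit of the component $K$ containing $p_1,p_2$ to force either $K$ to be reflection-invariant --- hence to contain an antipodal pair and a half-circle --- or the orbit to cover the whole circle). As written, the proposal has a genuine gap precisely at the step you flagged as ``the main obstacle,'' and the proposed way around it does not hold.
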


Instead of applying this Proposition directly, we will rather use the
following corollary. 

\begin{coro}
\label{sec:symm-char}
 Let $\cU$ be a set of continuous functions defined on a radial domain $B\subset
 \mathbb R^N,$ $N\geq 2,$ and suppose that the set $\cM_\cU$ defined in
 (\ref{eq:5}) contains a nonempty subset $\cN$ with the following
 properties: 
 \begin{itemize}
 \item[(i)] $\cN$ is relatively open
 in $\Sn$;
\item[(ii)] For every $e \in \partial \cN$ and $z \in
 \cU$ we have $z \le z \circ \sigma_e$ in $B(e)$. Here $\partial
 \cN$ denotes the relative boundary of $\cN$ in $\Sn$. 
 \end{itemize}
Then there is $p \in \Sn$ such that every $z \in \cU$ is foliated Schwarz symmetric with
 respect to $p$.
\end{coro}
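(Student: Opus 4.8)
The plan is to deduce the corollary from Proposition \ref{sec:char-foli-schw-1} by verifying its hypotheses with a suitable choice of $\tilde e$ and then, in each two-dimensional plane $P$ through $\tilde e$, locating two distinct symmetry points in a single connected component of $\cM_\cU \cap P$. First I would fix any $\tilde e \in \cN$; since $\cN \subseteq \cM_\cU$ and $\cN \neq \emptyset$, this gives the starting point required in \eqref{eq:5}. The task is then to show: for every two-dimensional subspace $P \subseteq \R^N$ with $\tilde e \in P$, there exist two different points $p_1, p_2$ lying in the same connected component of $\cM_\cU \cap P$ with $z \equiv z \circ \sigma_{p_i}$ for all $z \in \cU$, $i=1,2$.

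Fix such a $P$ and work on the circle $\Sn \cap P$, which I parametrize by an angle and identify antipodal-free arcs as needed. Let $\cN_P := \cN \cap P$, a relatively open subset of the circle $\Sn \cap P$ containing $\tilde e$; let $C$ be the connected component of $\cN_P$ containing $\tilde e$, so $C$ is a (relatively) open arc. Since $\cN$ is relatively open in $\Sn$ but (being a proper subset, as $\cM_\cU$ cannot be all of $\Sn$ unless every $z$ is constant, in which case the conclusion is trivial) not all of the circle, $C$ has nonempty boundary consisting of one or two endpoints $p_1, p_2 \in \partial \cN \cap P \subseteq \partial \cN$. The key observation is that each such endpoint $p_i$ lies in $\overline{\cN} \subseteq \overline{\cM_\cU}$; since the defining inequalities $z(x) \ge z(\sigma_e x)$ on $B(e)$ pass to the closure (using continuity of $z$ and continuous dependence of $\sigma_e$ and $B(e)$ on $e$), we get $p_i \in \cM_\cU$, i.e. $z \ge z \circ \sigma_{p_i}$ in $B(p_i)$ for all $z \in \cU$. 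On the other hand, by property (ii), $p_i \in \partial \cN$ gives $z \le z \circ \sigma_{p_i}$ in $B(p_i)$ for all $z \in \cU$. Combining, $z \equiv z \circ \sigma_{p_i}$ on $B(p_i)$, hence on all of $B$ by the reflection symmetry. Thus $p_1$ and $p_2$ are symmetry points of every $z \in \cU$.

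It remains to check that $p_1, p_2$ are distinct and lie in the same connected component of $\cM_\cU \cap P$. For the first point: if $C$ were the entire circle $\Sn \cap P$ we would have $\cN_P = \Sn \cap P$ with empty boundary, contradicting that $\cN$ is a proper subset along $P$ — but one must rule out the degenerate possibility $\cN \cap P = \Sn \cap P$ directly; this forces, via (i) applied along $P$ and the closure argument, that all $z \in \cU$ are constant along every circle in $P$, and handling this case separately (the conclusion holds trivially, any $p$ works) I may assume $C$ is a proper open arc with two distinct endpoints $p_1 \neq p_2$. (If the component $C$ is a proper arc with a single boundary point, one instead picks $p_1$ to be that endpoint and $p_2 = \sigma_{p_1}$-related reflection or argues with the antipode; since $\sigma_e = \sigma_{-e}$ and both $e, -e$ play symmetric roles, I expect to reduce to the two-endpoint case.) For the second point: $C \cup \{p_1, p_2\} = \overline{C}$ is a closed arc, hence connected, and it is contained in $\cM_\cU \cap P$ because $C \subseteq \cN \subseteq \cM_\cU$ and $p_1, p_2 \in \cM_\cU$ as shown; so $p_1, p_2$ lie in the same connected component of $\cM_\cU \cap P$. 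With the hypotheses of Proposition \ref{sec:char-foli-schw-1} now verified, it yields $p \in \Sn$ such that every $z \in \cU$ is foliated Schwarz symmetric with respect to $p$, which is the claim.

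The main obstacle I anticipate is the careful bookkeeping of the \emph{antipodal identification}: the map $e \mapsto \sigma_e$ satisfies $\sigma_e = \sigma_{-e}$, so statements about $\cM_\cU$ and $\cN$ are really statements on projective space, and one must make sure that "connected component of $\cM_\cU \cap P$" and the choice of arc endpoints are interpreted consistently, and that the degenerate cases (component equal to a full semicircle, or $\cN$ meeting $P$ in everything, or a component with only one endpoint) are all disposed of correctly. The closure argument promoting $\partial \cN \subseteq \overline{\cM_\cU}$ to $\partial \cN \subseteq \cM_\cU$ is routine given continuity, but should be stated explicitly since it is what makes the endpoints usable.
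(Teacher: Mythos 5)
Your approach is essentially the same as the paper's: invoke Proposition~\ref{sec:char-foli-schw-1}, fix a plane $P$ through $\tilde e \in \cN$, use openness of $\cN$ plus closedness of $\cM_\cU$ to obtain endpoints in $\partial \cN$, and combine $z\ge z\circ\sigma_{p_i}$ (from membership in $\cM_\cU$) with $z\le z\circ\sigma_{p_i}$ (from hypothesis (ii)) to get equality. The one place where the paper's bookkeeping is genuinely cleaner than yours is the choice of set inside $P$: you take $C$, a connected component of the open set $\cN\cap P$, and then pass to $\overline C$, which forces you into the two awkward degenerate cases you flag (the whole circle, and the single-endpoint arc such as $\cN\cap P=(\Sn\cap P)\setminus\{q\}$). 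The paper instead takes $L$ to be a connected component of $\overline{\cN\cap P}$ directly; this absorbs your single-endpoint case into the branch $L=\Sn\cap P$, where the conclusion follows for every $p\in L$ at once, because then both $p$ and $-p$ lie in $L\subset\cM_\cU$ and the two inequalities on $B(p)$ and $B(-p)$ together force $z\equiv z\circ\sigma_p$. Your parenthetical about "picking $p_2$ via the antipode" gestures at the right fix but is not carried out, and the claim that the full-circle case makes the conclusion "trivial" should really be the explicit verification that $\pm p\in\cM_\cU$ implies $z\equiv z\circ\sigma_p$. Modulo tightening those two spots, your plan is sound.
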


\begin{proof}
By assumption, there exists $\tilde e \in \cN \subset \cM_\cU$. Let $P\subseteq
 \mathbb R^N$ be a twodimensional subspace containing $\tilde
 e$, and let $L$ denote the connected component of $\overline {\cN \cap
   P}$ containing $\tilde e$. Since $\cM_\cU$ is
closed, $L$ is a subset of the connected component of $\cM_\cU \cap P$
containing $\tilde e$. By Proposition~\ref{sec:char-foli-schw-1}, it suffices to show that 
there are different points $p_1,p_2 \in L$ such that   
$z \equiv z \circ \sigma_{p_1}$ and $z \equiv z \circ
 \sigma_{p_2}$ for every $z \in \cU$.
 
We distinguish two cases. If $L= \Sn \cap P$, then we have $z \equiv z
\circ \sigma_p$ in $B$ for every $p \in L$, $z \in \cU$ by the definition of $\cM_\cU$
and since $L \subset \cM_\cU$.

 If $L \not = \Sn \cap P$, then there exists two different points
 $p_1,p_2$ in the relative boundary of $L$ in $\Sn \cap
 P$. Since $\cN$ is relatively open in $\Sn$, these points are contained
 in $\partial \cN \subset \cM_\cU$, and by assumption and the definition
 of $\cM_\cU$ we have $z \equiv z \circ \sigma_{p_1}$ and $z \equiv z \circ
 \sigma_{p_2}$ in $B$ for every $z \in \cU$, as required.
\end{proof}

\section{A Harnack-Hopf type lemma and related estimates}\label{hopflemma}

The first result of this section is an estimate related to a linear parabolic boundary
value problem on a (parabolic) half
cylinder. The estimate can be seen as an extension of
both the Harnack inequality and the Hopf lemma since it also gives information on a \textquotedblleft tangential
\textquotedblright derivative at corner points. A somewhat related
(but significantly weaker) result for supersolutions of the Laplace
equation was given in \cite[Lemma A.1]{girao:weth}. 

\begin{lemma}\label{hopf:lemma:Neumann}
Let $a,b \in \R$, $a<b$, $I:=(a,b),$ $B_+:=\{x\in \overline B\::\: x_N>0\}.$ Suppose that $v\in C^{2,1}(\overline{B_+}\times I)\cap C(\overline{B_+\times I})$ satisfies 
\begin{equation*}
\begin{aligned}
v_t-\mu\Delta v - c v&\geq 0 &&\qquad \text{in $B^\circ_+ \times I$,}\\
\frac{\partial v}{\partial \nu}&= 0 &&\qquad \text{on $\Sigma_2 \times I$,}\\
\hspace{2.7cm} v &= 0 && \qquad \text{on $\Sigma_1 \times I$,}\\
\hspace{2.7cm} v(x,a) &\geq 0&&\qquad \text{for $x\in B_+$,}
\end{aligned}
\end{equation*}
where the sets $\Sigma_i$ are given in (\ref{eq:19}) and the coefficients satisfy 
\begin{equation*}
\frac{1}{M} \leq \mu(x,t)\leq M \quad \text{and}\quad 
 |c(x,t)|\leq M \qquad \text{ for } (x,t)\in B_+\times I
\end{equation*}
with some positive constant $M>0.$ Then $v\geq 0$ in $B_+\times
(a,b)$. Moreover, if $v(\cdot,a) \not\equiv 0$ in $B_+$, then 
\begin{equation}\label{hopf:conclusion:general:notation}
v>0 \text{ in } B_+\times I\qquad  \text{and}\qquad \frac{\partial v}{\partial e_N}>0 \text{ on } \Sigma_1\times I.
\end{equation}
Furthermore, for every $\delta_1>0,$ $\delta_2 \in (0,\frac{b-a}{4}]$, there exist $\kappa>0$ and $p>0$ depending only on
$\delta_1,$ $\delta_2$, $B$ and $M$ such that 
\begin{equation}
  \label{eq:1}
v(x,t) \ge x_N \,\kappa \bigg(\int_{Q(\delta_1,\delta_2)} v^p \ dxdt\bigg)^{\frac{1}{p}}\ \ \  \text{for every
  $x \in B_+,\: t \in [a+3\delta_2,a+4\delta_2],$}   
\end{equation}
with $Q(\delta_1,\delta_2):= \{ (x,t) \::\: 
x \in B_+,\: x_N \ge \delta_1, \: a+
  \delta_2 \le t \le a+2\delta_2\}.$
\end{lemma}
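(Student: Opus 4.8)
Here is my plan for proving the final statement (Lemma~\ref{hopf:lemma:Neumann}).

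The plan is to obtain the qualitative conclusions -- nonnegativity, and, when $v(\cdot,a)\not\equiv0$, strict positivity together with the Hopf inequality on the flat face $\Sigma_1$ -- from classical parabolic maximum principles, and the quantitative bound~\eqref{eq:1} by combining the inversion extension of Remark~\ref{extension:remark} with a weak Harnack chain in the interior and a \emph{paraboloidal} Hopf barrier near $\Sigma_1$; the last point is where the real difficulty lies. For the qualitative part I would first replace $v$ by $e^{-\lambda t}v$ with $\lambda>M$ to reduce to $c\le0$. Then a negative minimum of $v$ on $\overline{B_+}\times[a,t_1]$ cannot lie in $B^\circ_+\times(a,t_1]$ (supersolution property), on $B_+\times\{a\}$, or on $\Sigma_1\cup(\overline{\Sigma_1}\cap\overline{\Sigma_2})$ (where $v=0$), so it would lie on the relatively open curved part $\Sigma_2$ -- which is excluded by the parabolic Hopf lemma (an interior ball condition holds along $\Sigma_2$) together with $\partial_\nu v=0$. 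Hence $v\ge0$. If moreover $v(\cdot,a)\not\equiv0$, the strong maximum principle and the boundary point lemma on $\Sigma_2$ give $v>0$ in $(B^\circ_+\cup\Sigma_2)\times I$, and the parabolic Hopf lemma at each point of the relatively open face $\Sigma_1$ (again an interior ball condition holds) gives $\partial v/\partial e_N=-\partial_\nu v>0$ on $\Sigma_1\times I$; this is~\eqref{hopf:conclusion:general:notation}.

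For~\eqref{eq:1} we may assume $\int_{Q(\delta_1,\delta_2)}v^p>0$, and, after a further exponential substitution (which changes the final constant only by the harmless factor $e^{\lambda\delta_2}$), that $c\le0$. By Remark~\ref{extension:remark}, whose proof carries over to supersolutions since the inversion intertwines the operators, I would extend $v$ across $\Sigma_2$ to a nonnegative strong supersolution $\tilde v$ of a uniformly parabolic equation $\tilde v_t-\tilde\mu\Delta\tilde v-\tilde b\,\partial_r\tilde v-\tilde c\,\tilde v\ge0$ on $\widetilde{B_+}\times I$, with bounded coefficients and $\tilde\mu$ bounded below, $\tilde v=v$ on $\overline{B_+}$, and $\tilde v$ vanishing on the flat part $\{x_N=0\}$ of $\partial\widetilde{B_+}$; crucially $\Sigma_2$ is now interior to $\widetilde{B_+}$, and $d_0:=\dist(\overline{B_+},\partial\widetilde B)>0$. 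Put $\rho:=\tfrac14\min(\delta_1,d_0)$. Because $Q(\delta_1,\delta_2)$ contains the whole slab $\{x\in\overline{B_+}:x_N\ge\delta_1\}$ at every time of $[a+\delta_2,a+2\delta_2]$, a finite covering by balls and the parabolic weak Harnack inequality (see \cite{lieberman}) give $\tilde v\ge c'\bigl(\int_{Q(\delta_1,\delta_2)}v^p\bigr)^{1/p}$ on some later-time cylinder; chaining this bound forward in time and across the connected compact set $\{x\in\widetilde{B_+}:x_N\ge\rho/2,\ \dist(x,\partial\widetilde B)\ge d_0/2\}$ in finitely many steps (their number, and hence the resulting constant, depending only on $\delta_1,\delta_2,B,M$) produces $p>0$ and $c_1>0$, depending only on $\delta_1,\delta_2,B,M$, such that $\tilde v(x,t)\ge c_1\bigl(\int_{Q(\delta_1,\delta_2)}v^p\bigr)^{1/p}$ for all $x\in\overline{B_+}$ with $x_N\ge\rho/2$ and all $t\in[a+\tfrac52\delta_2,a+4\delta_2]$. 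Since $x_N\le\operatorname{diam}(B)$, this already gives~\eqref{eq:1} in the range $x_N\ge\rho/2$.

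For $x^*\in B_+$ with $0<x_N^*<\rho/2$ and $t^*\in[a+3\delta_2,a+4\delta_2]$, I would set $\tilde x:=x^*+(\rho-x_N^*)e_N$, $\gamma:=4\rho^2/\delta_2$, and work on the paraboloidal region $\mathcal P:=\{(x,t):t\le t^*,\ |x-\tilde x|^2+\gamma(t^*-t)<\rho^2\}$: it lies in $\widetilde{B_+}\times I$, it opens up from the single point $\tilde x$ at its earliest time $t^*-\rho^2/\gamma=t^*-\tfrac{\delta_2}{4}>a+\tfrac52\delta_2$, and at $t=t^*$ its cross-section is the ball $B_\rho(\tilde x)$, tangent to $\{x_N=0\}$ at $x^*-x_N^*e_N$. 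On $\mathcal P$ I would use the classical barrier $\psi(x,t):=e^{-\alpha(|x-\tilde x|^2+\gamma(t^*-t))}-e^{-\alpha\rho^2}$, which vanishes on the lateral paraboloid, is positive inside, and -- for $\alpha=\alpha(\rho,\gamma,M)$ large -- is a subsolution of the extended operator on the annular region $A:=\{|x-\tilde x|\ge\rho/2\}\cap\mathcal P$. On the parabolic boundary of $A$ one has $\psi=0\le\tilde v$ on the lateral paraboloid and, on the inner cylinder $\{|x-\tilde x|=\rho/2\}$ (where $x_N\ge\rho/2$), $\tilde v\ge c_1\bigl(\int_{Q(\delta_1,\delta_2)}v^p\bigr)^{1/p}$ by the previous step, so the maximum principle yields $\tilde v\ge c\,\psi$ on $A$ with $c$ a fixed positive multiple of $c_1\bigl(\int_{Q(\delta_1,\delta_2)}v^p\bigr)^{1/p}$. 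Finally, since $s\mapsto e^{-\alpha(\rho-s)^2}-e^{-\alpha\rho^2}$ is convex on $[0,\rho/2]$ and vanishes at $0$, it is $\ge2\alpha\rho e^{-\alpha\rho^2}s$ there; evaluating $\tilde v\ge c\psi$ at $(x^*,t^*)$, where $|x^*-\tilde x|=\rho-x_N^*$, gives $v(x^*,t^*)=\tilde v(x^*,t^*)\ge\kappa\,x_N^*\bigl(\int_{Q(\delta_1,\delta_2)}v^p\bigr)^{1/p}$ for some $\kappa=\kappa(\delta_1,\delta_2,B,M)>0$. Combined with the case $x_N\ge\rho/2$, this is~\eqref{eq:1}.

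The step I expect to be the main obstacle is this last one. A cylindrical barrier over a fixed time interval is useless here: a subsolution cannot grow out of a vanishing initial time-slice, and near the Dirichlet face $\Sigma_1$ one has no a priori linear-in-$x_N$ lower bound for $v$ at the initial time. The paraboloid $\mathcal P$ circumvents this by emanating from a single interior point at which the interior weak Harnack bound is available; the price is that $\kappa$ and $1/p$ degenerate as $\delta_1,\delta_2\to0$, which the statement permits. One must also check that $\tilde v$ is a genuine strong supersolution across the inversion sphere -- this relies on the $C^1$ matching forced by $\partial_\nu v=0$ on $\Sigma_2$, exactly as in Lemma~\ref{extension:lemma} -- and that all Harnack-chain and comparison arguments are carried out on $\widetilde{B_+}$, where $\Sigma_2$ has become interior.
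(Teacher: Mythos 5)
Your argument is correct and follows the paper's proof in all essential respects: exponential substitution plus the boundary point lemma for nonnegativity and strict positivity, the inversion extension to $\widetilde{B_+}$ so that $\Sigma_2$ becomes interior, an interior (weak) Harnack estimate to obtain a lower bound on $\{x_N\ge\text{const}\}$, and a barrier argument to transport that bound down to $\Sigma_1$ with a linear-in-$x_N$ factor. The only substantive variation is in the geometry of the comparison region: you use a paraboloid $\{\,|x-\tilde x|^2+\gamma(t^*-t)<\rho^2\,\}$ that emanates from a single interior point, whereas the paper instead uses the space-time ball $\{\,|x-\delta_1 e_N|^2+t^2<\delta_1^2,\ t<0,\ x_N<\delta_1/2\,\}$ with the barrier $z(x,t)=\bigl(e^{-k(|x-\delta_1e_N|^2+t^2)}-e^{-k\delta_1^2}\bigr)e^{-\beta_0 t}$; both regions share the crucial feature you identify — no initial time-slice on the parabolic boundary along which the solution might vanish, so the barrier is anchored only on the curved surface (where it vanishes) and on the inner cylinder $\{x_N\ \text{bounded below}\}$ (where the Harnack estimate applies). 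Your observation that a cylindrical barrier over a fixed time interval cannot work is exactly right and is what both constructions are designed to avoid. Your covering/chaining version of the Harnack step is somewhat less explicit than the paper's direct citation of Pol\'{a}\v{c}ik's Lemma 3.5, and you obtain the Hopf inequality on $\Sigma_1$ in \eqref{hopf:conclusion:general:notation} directly from the boundary point lemma rather than as a byproduct of \eqref{eq:1}, but these are cosmetic differences.
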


\begin{proof} 
We begin by showing that $v\geq 0$ in $B_+\times I$. The argument is standard. Let $\varepsilon>0$ and define $\varphi(x,t):=e^{-2M t}v(x,t)+\varepsilon$ for $x\in \overline{B_+}$ and $t\in \overline{I}.$ Then 
\begin{align*}
\varphi_t-\mu\Delta \varphi-(c-2M)\varphi&\ge \varepsilon(2M-c )\geq \eps M, &&x\in B^\circ_+, t\in I,\\
 \frac{\partial \varphi}{\partial \nu}(x,t) &= 0, &&x\in\Sigma_2, t\in I,\\
 \varphi(x,t) &= \varepsilon, &&x\in \Sigma_1, t\in I,\\
 \varphi(x,a) &\geq \varepsilon, &&x\in B_+.
\end{align*}
Suppose by contradiction that $\bar t := \sup \{t \in [a,b)\::\: \text{$\varphi> 0$ in $B_+\times [a,t)$}\} \: <\: b.$ By continuity, we have $\bar t>a$, $\varphi(\cdot,\bar t)\geq 0$ in $B_+$ and $\varphi(\bar
x,\bar t)=0$ for some $\bar x\in B_+$. As a consequence of the Neumann boundary conditions on $\Sigma_2 \times I$  and the boundary point lemma (see for example \cite[Lemma 2.8]{lieberman}), we find that $\bar x\in {B_+}^\circ$ . But then 
\begin{align*}
0\geq \varphi_t(\bar x,\bar t)-\mu\Delta \varphi(\bar x,\bar t)-(c-2M)\varphi(\bar x,\bar t) \ge \eps M > 0,
\end{align*}
a contradiction. Therefore $\varphi>0 $ in $B_+\times I$. Since $\eps>0$ was chosen arbitrarily, we conclude that $v\geq 0$ in $B_+\times I$.  Then the first claim in
\eqref{hopf:conclusion:general:notation} follows by the strong maximum
principle and the boundary point lemma (see e.g. \cite[Theorem
2.7 and Lemma 2.8]{lieberman}).

Next we note that the second claim in
\eqref{hopf:conclusion:general:notation} is a consequence of the first
claim and the inequality (\ref{eq:1}) (for suitably chosen
$\delta_1,\delta_2$). It thus remains to prove (\ref{eq:1}). Let
$\delta_1>0$, $\delta_2 \in (0,\frac{b-a}{4}]$ and consider
$\widetilde B, \widetilde {B_+}$ as defined in (\ref{eq:14}) and Remark~\ref{extension:remark}. Without loss, we
may assume that 
\begin{equation}
  \label{eq:2}
\delta_1 < \min \Bigl \{\frac{\delta_2}{2}, \frac{\dist(B,\partial \widetilde
B)}{3}\Bigr \}. 
\end{equation}
By Remark \ref{extension:remark}, there exists an extension $\tilde
v\in W_{N+1,loc}^{2,1}(\widetilde {B_+} \times
I)$ of $v$ which satisfies $\cL(t,x)\tilde v\geq 0$ in
$\widetilde {B_+} \times I$ in the strong sense. Here the linear differential
operator $\cL$ is given by 
 \begin{align*}
 \cL(t,x) w:= w_t-\tilde \mu(x,t)\Delta w- \tilde b(x,t)\partial_r w-\tilde c(x,t)w
 \end{align*}
 with $\tilde \mu,\: \tilde b$ given as in Lemma \ref{extension:lemma}
 and 
\begin{equation*}
\tilde c(x,t):= \begin{cases}
c(x,t), &\quad x\in B_+,\ t\in I,\\\vspace{.1cm}
c(\hat x,t), &\quad x\in\widetilde B_+ \setminus B_+,\ t\in I.
\end{cases}
\end{equation*}  

Moreover, there is a positive constant $\beta_0$ which only depends on $B$ and $M$ such that $\tilde \mu,$ $\tilde b,$ and $\tilde c$ are uniformly bounded by $\beta_0,$ and $\tilde\mu$ is
 bounded below by $\beta_0^{-1}.$ Next, we define the compact sets 
\begin{align*}
K_{\delta_1}&:= \{x \in B_+\::\: x_N \ge \frac{\delta_1}{2}\}\quad \text{and}\\
\tilde K_{\delta_1}&:= \{x \in \widetilde {B_+} \::\: x_N\geq \frac{\delta_1}{2},\:\dist(x,\partial \widetilde B) \ge \delta_1 \}.
\end{align*}
By the parabolic Harnack inequality given in \cite[Lemma
3.5]{polacik}, there exist $\kappa_1>0$ and $p>0$, depending only
on $\delta_1, \delta_2$, $B$ and $M$, such that
\begin{align}\label{eq1:hopf-0}
\inf_{\genfrac{}{}{0pt}{}{\scriptstyle{x\in \tilde
       K_{\delta_1}}}{\scriptstyle{t\in[a+\frac{5}{2}\delta_2,a+4\delta_2]}}}\tilde
 v(x,t) \nonumber
&\geq \kappa_1 \bigg(\int_{\tilde K_{\delta_1} \times
  [a+\delta_2,a+2\delta_2]} {\tilde v}^p\ dxdt\bigg)^{\frac{1}{p}}\\
  &\geq \kappa_1 \bigg(\int_{Q(\delta_1,\delta_2)}  v^p\ dxdt\bigg)^{\frac{1}{p}}.
\end{align}
Here we used in the last step that 
$$
Q(\delta_1,\delta_2) \;\subset \; K_{\delta_1} \times
  [a+\delta_2,a+2\delta_2] \;\subset \; \tilde K_{\delta_1} \times
  [a+\delta_2,a+2\delta_2].
$$
Next, we define 
\begin{align*}\begin{split}
D&:=\{(x,t)\::\:t<0,\: x_N <
\frac{\delta_1}{2},\: |x-\delta_1 e_N|^2 + t^2 < \delta_1^2 \};\\
\Gamma_1&:= \{(x,t)\::\: t \le 0,\: x_N <
\frac{\delta_1}{2},\: |x-\delta_1 e_N|^2 + t^2 = \delta_1^2  \};\\
\Gamma_2&:= \{(x,t)\::\:t \le 0,\: |x-\delta_1 e_N|^2 + t^2 \le \delta_1^2, \: x_N =
\frac{\delta_1}{2} \}.
\end{split}
\end{align*}
Note that $\Gamma_1 \cup \Gamma_2$ equals $\partial_P D$, the
parabolic boundary of $D$. Let $x_0\in \Sigma_1$ and $t_0 \in
[a+3\delta_2,a+4 \delta_2]$. By construction and (\ref{eq:2}), we then have 
$$
\{(x_0+x,t_0+t)\::\: (x,t) \in D\} \subset \widetilde
{{B_+}}^\circ\times [a+\frac{5}{2}\delta_2,a + 4 \delta_2]
$$
and 
\begin{equation}
  \label{eq:17}
\{(x_0+x,t_0+t)\::\: (x,t) \in \Gamma_2\} \subset \tilde
K_{\delta_1} \times [a+\frac{5}{2}\delta_2,a + 4 \delta_2].
\end{equation}
Next we fix $k>0$ such that 
\begin{align*}
 k\geq \frac{2\beta_0[{\delta_1}+\beta_0N(1+{\delta_1})]}{{\delta_1}^2}.
\end{align*}
Moreover, we define the function
\begin{equation*}
z:\overline D\to\mathbb R,\qquad 
 z(x,t):= \Bigl(e^{-k(|x-{\delta_1}
   e_N|^2+t^2)}-e^{-k{\delta_1}^2}\Bigr)e^{-\beta_0 t} 
\end{equation*}
Let also 
$$
\eps:= \frac{\min \limits_{(x,t) \in \Gamma_2} \tilde v(x_0+x,t_0+t)}{\max \limits_{(x,t) \in \Gamma_2} z(x,t)} >0
$$
and consider  
\begin{align*}
w: \overline D\to\mathbb R,\qquad
 w(x,t):=\tilde v(x_0+x,t_0+t)-\varepsilon z(x,t) 
\end{align*}
Then $w \geq 0$ on $\Gamma_2$ and also $w\geq 0$ on $\Gamma_1$, since
$z\equiv 0$ on $\Gamma_1$. 
Moreover, for $(x,t)\in D$ we have 
\begin{align*}
&\cL(t_0+t,x_0+x)z(x,t)\\
= &[-\beta_0 - \tilde c(t+t_0,x_0+x)]z(x,t)\\
& + 2k\, e^{-k(|x-{\delta_1} e_N
  |^2+t^2)-\beta_0 t}\Bigl[\tilde \mu(t_0+t,x_0+x)(N-2k|x-{\delta_1} e_N|^2)   
\\&-t -\tilde b(t_0+t, x_0+x)\frac{x_0+x}{|x_0+x|}\cdot (x-{\delta_1} e_N)\Bigr]\\
\leq& 2k\, e^{-k(|x-{\delta_1} e_N|^2+t^2)-\beta_0t}\Bigl[{\delta_1}-
\frac{2k}{\beta_0} \Bigl(\frac{{\delta_1}}{2}\Bigr)^2 + \beta_0 N(1 + {\delta_1})\Bigr]\leq 0,
\end{align*}
by the definition of $k.$ Therefore we have 
$$
\cL(t_0+t,x_0+x)w(x,t) \geq 0 \;\: \text{for $(x,t)\in D$}\quad
\text{and}\quad w\geq 0\;\: \text{on $\partial_P D = \Gamma_1\cup \Gamma_2$.}
$$
By the maximum principle for strong solutions,  we conclude that
$w\geq 0$ in $\overline D$ and thus in particular 
$$
\tilde v(x_0+s e_N,t_0) \ge \eps z(s e_N,0) \qquad \text{for $s \in (0,\frac{{\delta_1}}{2})$.}
$$
Since moreover
$$
z(s e_N,0)= e^{-k(s-{\delta_1})^2}-e^{-k{\delta_1}^2} \ge s\, \eps_1\, \max \limits_{(x,t) \in \Gamma_2} z(x,t)  \qquad \text{for $s \in (0,\frac{{\delta_1}}{2})$}
$$
with a constant $\eps_1 \in (0,\frac{1}{\operatorname{diam}(B)})$ depending
only on the function $z$ and on $B$, it follows that 
$$
\tilde v(x_0+s e_N,t_0) \ge s \eps_1\, \eps \max \limits_{(x,t) \in \Gamma_2} z(x,t)  = \eps_1 s  \min \limits_{(x,t) \in
  \Gamma_2} \tilde v(x_0+x,t_0+t)  \quad \text{for $s \in (0,\frac{{\delta_1}}{2})$.}
$$
By (\ref{eq:17}) and since $x_0 \in \Sigma_1$, $t_0 \in
[a+3\delta_2,a+4 \delta_2]$ were chosen arbitrarily, we conclude that 
$$
v(x,t) \:\ge\: \eps_1 x_N \!\!\!\!\inf_{\genfrac{}{}{0pt}{}{\scriptstyle{y \in \tilde
       K_{\delta_1}}}{\scriptstyle{\tau  \in [a+\frac{5}{2}\delta_2, a+4\delta_2]}}}\!\!\!\!\tilde
 v(y,\tau) \qquad 
\left\{
  \begin{aligned}
  &\text{for $x \in B_+$ with $x_N < \frac{{\delta_1}}{2}$}\\
  &\text{and $t \in [a+3\delta_2,a+4 \delta_2].$}    
  \end{aligned}
\right.
$$
 By definition of $K_{\delta_1}$ and since 
$0 \le \eps_1 x_N \le 1$ for $x \in B_+$, the latter
estimate holds also without the restriction $x_N <
\frac{{\delta_1}}{2}$. Combining this fact with (\ref{eq1:hopf-0}), we obtain that 
\begin{align*}
v(x,t) \ge \kappa_1 \eps_1 x_N \bigg(\int_{Q(\delta_1,\delta_2)} v^p \
dxdt\bigg)^{\frac{1}{p}}\ \ \ \ \text{for $x \in B_+$ and $t \in [a+3\delta_2,a+4 \delta_2],$}
\end{align*}
 so that (\ref{eq:1}) holds with $\kappa:= \kappa_1 \eps_1$.
\end{proof}

Next, we prove a related but weaker Hopf Lemma for a class of
cooperative systems under mixed boundary conditions. The argument is
essentially the same as in the scalar case, but we include it for
completeness since we could not find the result in this form in the literature. We use the notation of Lemma \ref{hopf:lemma:Neumann}.

\begin{lemma}\label{hopf:lemma:Neumann:systems}
Let $a,b \in \R$, $a<b$, $I:=(a,b),$ $J:=\{1,2,\ldots,n\}$ for some $n\in\mathbb N,$ and $w=(w_1, w_2,\ldots, w_n)$ with $w_i\in C^{2,1}(\overline{B_+}\times I)\cap C(\overline{B\times I})$ be a classical solution of 
\begin{equation*}
\begin{aligned}
(w_i)_t-\mu_i\Delta w_i&=\sum_{j\in J}c_{ij}w_j&&\qquad \text{in
  $B_+^\circ \times I$,}\\
\frac{\partial w_i}{\partial \nu}&= 0 &&\qquad \text{on $\Sigma_2 \times I$,}\\
\hspace{2.7cm} w_i &= 0 &&\qquad \text{on $\Sigma_1 \times I$,}\\
\hspace{2.7cm} w_i(x,\tau) &\geq 0&&\qquad \text{for $x\in B_+$,}
\end{aligned}
\end{equation*}
for $i\in J$ with coefficient functions $\mu_i, c_{ij}\in
L^\infty(B_+\times I)$. Suppose moreover that $\inf\limits_{B_+ \times
  I}\mu_i>0$ and $\inf\limits_{B_+ \times
  I}c_{ij}\geq 0$ for $i,j \in
J$, $i\neq j$. Then 
\begin{equation}
  \label{eq:10}
w_i\geq 0 \qquad \text{in $B_+\times I$ for $i\in J$.}  
\end{equation}
Moreover, if $w_i(x,\tau) \not\equiv 0$ for some $i\in J,$ then 
\begin{equation}\label{hopf:conclusion:systems}
w_i>0 \text{ in } B_+\times I\ \ \ \  \text{ and }\ \ \ \  \frac{\partial w_i}{\partial e_N}>0 \text{ on } \Sigma_1\times I.
\end{equation}
\end{lemma}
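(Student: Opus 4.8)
The plan is to prove the nonnegativity assertion \eqref{eq:10} by an approximation argument coupled with the parabolic maximum principle, run simultaneously on all components via a first‑touching‑time argument that exploits cooperativity, and then to deduce the strong positivity \eqref{hopf:conclusion:systems} by reducing to the scalar result Lemma~\ref{hopf:lemma:Neumann}.

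For \eqref{eq:10}, I would set $M_0:=\max_{i,j}\|c_{ij}\|_{L^\infty(B_+\times I)}$, fix $\eps>0$ and $\lambda>nM_0$, and put $\varphi_i(x,t):=e^{-\lambda t}w_i(x,t)+\eps$ for $i\in J$. A direct computation gives, in $B_+^\circ\times I$,
$$(\varphi_i)_t-\mu_i\Delta\varphi_i=\sum_{j\in J}c_{ij}\varphi_j-\lambda\varphi_i+\eps\Bigl(\lambda-\sum_{j\in J}c_{ij}\Bigr),$$
while $\varphi_i=\eps$ on $\Sigma_1\times I$, $\partial_\nu\varphi_i=0$ on $\Sigma_2\times I$, and $\varphi_i(\cdot,\tau)\geq\eps$ on $B_+$. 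Assume for contradiction that $\bar t:=\sup\{t\in[\tau,b):\varphi_i>0 \text{ in }B_+\times[\tau,t)\text{ for all }i\in J\}<b$. By continuity and compactness of $\overline{B_+}$ there are $i\in J$ and $\bar x\in\overline{B_+}$ with $\varphi_i(\bar x,\bar t)=0$ and $\varphi_j(\bar x,\bar t)\geq 0$ for every $j\in J$. Since $\varphi_i=\eps>0$ on $\Sigma_1$ we have $\bar x\notin\Sigma_1$; since $\partial_\nu\varphi_i=0$ on $\Sigma_2$, the parabolic boundary point lemma (exactly as used in the proof of Lemma~\ref{hopf:lemma:Neumann}) excludes $\bar x\in\Sigma_2$; and $\varphi_i(\cdot,\tau)\geq\eps$ excludes $\bar t=\tau$. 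Hence $\bar x\in B_+^\circ$, so $(\varphi_i)_t(\bar x,\bar t)\leq 0$ and $\Delta\varphi_i(\bar x,\bar t)\geq 0$, which makes the left-hand side above $\leq 0$ at $(\bar x,\bar t)$. On the right-hand side, $\varphi_i(\bar x,\bar t)=0$ kills the term $-\lambda\varphi_i$ and turns $\sum_{j}c_{ij}\varphi_j$ into $\sum_{j\neq i}c_{ij}\varphi_j(\bar x,\bar t)\geq 0$ by cooperativity ($c_{ij}\geq 0$ and $\varphi_j\geq 0$), while $\eps(\lambda-\sum_j c_{ij})\geq\eps(\lambda-nM_0)>0$; thus the right-hand side is $>0$, a contradiction. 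Therefore $\varphi_i>0$ in $B_+\times I$ for all $i$, and letting $\eps\to 0$ yields \eqref{eq:10}.

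For \eqref{hopf:conclusion:systems}, I would fix $i\in J$ with $w_i(\cdot,\tau)\not\equiv 0$. By \eqref{eq:10} and $c_{ij}\geq 0$ for $j\neq i$ we have $\sum_{j\neq i}c_{ij}w_j\geq 0$, so $w_i$ satisfies
$$(w_i)_t-\mu_i\Delta w_i-c_{ii}w_i\geq 0\quad\text{in }B_+^\circ\times I,$$
together with $w_i=0$ on $\Sigma_1\times I$, $\partial_\nu w_i=0$ on $\Sigma_2\times I$, $w_i(\cdot,\tau)\geq 0$ and $w_i(\cdot,\tau)\not\equiv 0$. Since $\mu_i$ is bounded and bounded below and $c_{ii}\in L^\infty(B_+\times I)$, Lemma~\ref{hopf:lemma:Neumann} applies to $w_i$ (with $\mu=\mu_i$, $c=c_{ii}$) and gives $w_i>0$ in $B_+\times I$ and $\partial_{e_N}w_i>0$ on $\Sigma_1\times I$, which is \eqref{hopf:conclusion:systems}.

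The only mildly delicate point is the first‑touching‑time step: one must ensure that a vanishing minimum of some $\varphi_i$ cannot occur on the Neumann face $\Sigma_2\times I$ (handled by the boundary point lemma, precisely as in Lemma~\ref{hopf:lemma:Neumann}) nor on the nonsmooth corner $\overline{\Sigma_1}\cap\overline{\Sigma_2}$ (which lies in $\overline{\Sigma_1}$, where $\varphi_i\geq\eps$), and that the sign bookkeeping forcing the contradiction uses cooperativity exactly through $\sum_{j\neq i}c_{ij}\varphi_j\geq 0$. Everything else is the standard scalar argument already carried out in this section.
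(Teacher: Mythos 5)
Your proof is correct and follows essentially the same route as the paper's: first the first-touching-time argument applied to $\varphi_i=e^{-\lambda t}w_i+\varepsilon$ (the paper takes $\lambda>\max_i\sum_j\|c_{ij}\|_{L^\infty}$, you take $\lambda>n\max_{i,j}\|c_{ij}\|_{L^\infty}$, which is the same idea), invoking the parabolic boundary point lemma to exclude touching on $\Sigma_2$, and then the reduction of the Hopf conclusion to the scalar Lemma~\ref{hopf:lemma:Neumann} via the observation that $\sum_{j\neq i}c_{ij}w_j\geq 0$. Your explicit remark about the corner $\overline{\Sigma_1}\cap\overline{\Sigma_2}$ being harmless because it lies in $\overline{\Sigma_1}$ is a helpful clarification that the paper leaves implicit.
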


 \begin{proof} To prove (\ref{eq:10}), we fix \mbox{$\lambda > \max \limits_{i\in J} \sum
     \limits_{j\in
     J}\|c_{ij}\|_{L^\infty(B_+\times I)}$} and let
 $\varepsilon>0$. We define $v_i(x,t):=e^{-\lambda t}w_i(x,t)+\varepsilon$ for $x\in
   \overline{B_+}, t\in \overline{I}$ and $i \in J$. 
Then  
\begin{align*}
(v_i)_t-\mu_i\Delta v_i-(c_{ii}-\lambda)v_i&>\sum_{j\in
  J\backslash\{i\}}c_{ij}v_j &&\qquad \text{in ${B_+}^\circ \times I$,}\\
 \frac{\partial v_i}{\partial \nu} & \equiv  0 &&\qquad \text{on
   $\Sigma_2 \times I,\quad$ and}\\
 v_i &\ge   \varepsilon>0&& \qquad \text{on $\Sigma_1 \times I \;\cup\;
   B_+ \times \{a\}$.}
\end{align*}
As in the proof of Lemma \ref{hopf:lemma:Neumann}, we show that $v_i> 0$ in
$B_+\times I$ for all $i\in J.$  Suppose by contradiction that 
$$
\bar t := \sup \{t \in [a,b)\::\: \text{$v_i> 0$ in $B_+\times [a,t)$
  for all $i\in J$}\}\;<\;b.
$$
By continuity, we have $\bar t>a$, $v_i(\cdot,\bar t)\geq 0$ in $B_+$
for all $i \in J$ and $v_j(\bar
x,\bar t)=0$ for some $\bar x\in B_+$ and some $j \in J$. The Neumann boundary conditions on $\Sigma_2 \times I$  and the boundary point lemma (see for example \cite[Lemma 2.8]{lieberman}) then imply that $\bar x\in {B_+}^\circ$. But then 
\begin{align*}
0\geq (v_j)_t(\bar x,\bar t)-\mu_i\Delta v_i(\bar x,\bar t)-(c_{ii}-\lambda)v_i(\bar x,\bar t)>\sum_{j\in J\backslash\{i\}}c_{ij}v_j(\bar x,\bar t)\geq 0,
\end{align*}
a contradiction. Therefore $v_i>0 $ in $B_+\times I$ for all $i \in
J$. Since $\eps>0$ was chosen arbitrarily, we conclude that
(\ref{eq:10}) holds. Consequently, the non-negativity of $c_{ij}$ for
$i \not=j$ implies that 
\begin{align*}
(w_i)_t-\mu_i(x,t)\Delta w_i- c_{ii}(x,t)w_i&=\sum_{j\in
  J\backslash\{i\}} c_{ij}w_j \geq 0 \qquad \text{in $B_+\times I$ for $i\in J.$} 
\end{align*}
Hence (\ref{hopf:conclusion:systems}) follows from Lemma~\ref{hopf:lemma:Neumann}.
\end{proof}

For the last lemma of this section, we need to fix additional
 notation. For $e\in \Sn,$ let $\sigma_e:\overline{B}\to \overline{B}$ and $B(e) \subset B$ be defined as in the
introduction. We also put
\begin{equation}
  \label{eq:11}
\Sigma_1(e) := \{x\in\partial B(e) : x\cdot e=0\} \;\:
\text{and}\;\: \Sigma_2(e) := \{x\in\partial B(e) : x\cdot e>0\}.
\end{equation}
For a subset $I\subset \R$ and a function $v:\overline{B}\times I \to
\R$, we define 
$$
v^e: \overline{B} \times I \to \R, \qquad v^e(x,t):= v(x,t)-v(\sigma_e(x),t).
$$
To implement the rotating plane technique for the boundary value problems
considered in our main results, we need to analyze under which
conditions positivity of $v^e(t,\cdot)$ in $B(e)$ at some time $t
\in I$ induces positivity of $v^{e'}(t',\cdot)$ in $B(e')$ 
for a slightly perturbed direction $e'$ at a later time $t'>t$. The following perturbation lemma is sufficient for our purposes.

\begin{lemma}\label{perturbationlemma}
Let $I= (0,1)$, let $v \in
C^{2,1}(\overline{B\times I})$, and consider a function \mbox{$\chi :[0,\sqrt{1+\operatorname{diam}(B)^2}\,] \to
  [0,\infty)$} such that  
$$
\leqno{(E\chi)} \qquad \left\{
  \begin{aligned}
&\text{$\lim \limits_{\vartheta \to 0} \chi(\vartheta)=0$  and}\\ 
&|v(x,t)-v(y,s)|+|\nabla v(x,t)-\nabla v(y,s)|\leq \chi(|(x,t)-(y,s)|)\\
&\text{for all $(x,t),(y,s)\in \overline {B}\times I.$}
  \end{aligned}
\right.
$$
Moreover, let $d,k,M>0$ be given constants. Then there
exists $\rho>0$, depending only on $B$, $d$, $k$, $M,$ and the function $\chi,$ with the
following property:  If $e \in \Sn$ is such that 
\begin{enumerate}
\item[(i)] the function $v^e$ satisfies 
\begin{equation*}
v^e_t-\mu(x,t)\Delta v^e - c(x,t) v^e \geq 0 \qquad \text{in $B(e)
  \times I$}
\end{equation*}
with some coefficient functions $\mu, c$ satisfying
\begin{equation*}
\frac{1}{M} \leq \mu(x,t)\leq M \quad \text{and}\quad 
 |c(x,t)|\leq M \qquad \text{ for } (x,t)\in B(e) \times I,
\end{equation*}
and 
\begin{equation*}
\text{$\frac{\partial v^e}{\partial \nu}= 0$ on $\Sigma_2(e) \times I$, $\ \ \; v^e = 0$ on $\Sigma_1(e) \times I,$  
$\ \ \; v^e \geq 0$ on  $B(e) \times \{0\}$,
}
\end{equation*} 
\item[(ii)] $\quad \sup \{v^e(x,\frac{1}{4})\::\: x \in B(e),\: x \cdot e \ge d \} \ge k,$
\end{enumerate}
then  
\begin{equation*}
v^{e'}(\cdot,1)>0 \quad \text{in $B(e')\quad$ for all $e'\in \Sn$  with $|e-e'|<\rho$.}
\end{equation*}
\end{lemma}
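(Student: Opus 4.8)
\noindent\emph{Proof idea.} The plan is to turn the single pointwise estimate in (ii) into a \emph{linear} lower bound for $v^e$ at the terminal time, then differentiate it at the hyperplane, and finally transfer the resulting information to directions $e'$ close to $e$. If $\{x\in B(e):x\cdot e\ge d\}=\emptyset$ then (ii) cannot hold, so we may assume this set is nonempty and, by (ii), fix $x_0\in B(e)$ with $x_0\cdot e\ge d$ and $v^e(x_0,\tfrac14)\ge\tfrac k2$. By Lemma~\ref{hopf:lemma:Neumann}, $v^e\ge 0$ on $B(e)\times I$; and since $\sigma_e$ is an isometry, $(E\chi)$ gives $|v^e(x,t)-v^e(y,s)|\le 2\chi(|(x,t)-(y,s)|)$ on $\overline B\times I$. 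Hence there are $r_1\in(0,\tfrac d2)$ and $\tau_1\in(0,\tfrac14]$, depending only on $B,d,k,\chi$ (with $r_1$ also below the curvature radius of $\partial B$), so that $v^e\ge\tfrac k4$ on $B_{r_1}(x_0)\times[\tfrac14,\tfrac14+\tau_1]$, while $B_{r_1}(x_0)\subset\{x\cdot e>\tfrac d2\}$ and $|B_{r_1}(x_0)\cap B|\ge c_B r_1^N>0$. I would then apply the Harnack--Hopf estimate \eqref{eq:1} of Lemma~\ref{hopf:lemma:Neumann} to $v^e$ on $B(e)\times(0,1)$ — whose hypotheses are those of (i) — with $a=0$, $b=1$, $\delta_2=\tfrac14=\tfrac{b-a}4$ and $\delta_1=\tfrac d2$. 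The point of these choices is that the cylinder $Q=Q(\tfrac d2,\tfrac14)=\{x\in B(e):x\cdot e\ge\tfrac d2\}\times[\tfrac14,\tfrac12]$ sits at times containing $t=\tfrac14$, while \eqref{eq:1} delivers its conclusion at $t=1=a+4\delta_2$; combined with the previous sentence this produces a constant $c_0>0$, depending only on $B,d,k,M,\chi$, with
\[
v^e(x,1)\ \ge\ c_0\,(x\cdot e)\qquad\text{for all }x\in B(e).
\]

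From this linear bound I would extract gradient information at the hyperplane. Since $v\in C^{2,1}(\overline{B\times I})$, we have $v^e(\cdot,1)\in C^2(\overline B)$ with $v^e(\cdot,1)\equiv0$ on $\Sigma_1(e)$; evaluating $v^e(z+se,1)\ge c_0 s$ for $z$ in the relative interior of $\Sigma_1(e)$ and letting $s\downarrow0$ gives $\partial_e v^e(z,1)\ge c_0$ on $\Sigma_1(e)$, whence, using that $\sigma_e$ fixes $H(e)$ pointwise (so $\partial_e v^e=2\partial_e v$ there), $\partial_e v(\cdot,1)\ge\tfrac{c_0}2$ on $\Sigma_1(e)$. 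By the modulus of continuity of $\nabla v$ from $(E\chi)$ and projection onto $H(e)$, I would then obtain $\tau_2>0$, depending only on $c_0$ and $\chi$, with $\partial_e v(y,1)\ge\tfrac{c_0}4$ for all $y\in\overline B$ satisfying $\dist(y,H(e))\le\tau_2$. (For an annulus, points near the inner rim project outside $B$; there one repeats the argument with the inversion extension of Lemma~\ref{extension:lemma}/Remark~\ref{extension:remark}, obtaining the analogous slab estimate inside $\widetilde B$.)

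Now the perturbation step. Fix $e'\in\Sn$, set $\epsilon:=|e-e'|$ and $\tau_3:=\tfrac{\tau_2}4$, and recall that $|\sigma_e x-\sigma_{e'}x|\le 4\operatorname{diam}(B)\,\epsilon$ on $\overline B$. On the part $\{x\in B(e'):x\cdot e'\ge\tau_3\}$ one has $x\cdot e\ge\tau_3-\operatorname{diam}(B)\,\epsilon\ge\tfrac{\tau_3}2$ for $\epsilon$ small, so the linear bound gives $v^e(x,1)\ge\tfrac{c_0\tau_3}2$ and
\[
v^{e'}(x,1)=v^e(x,1)+\bigl(v(\sigma_e x,1)-v(\sigma_{e'}x,1)\bigr)\ \ge\ \tfrac{c_0\tau_3}2-\chi\bigl(4\operatorname{diam}(B)\,\epsilon\bigr)>0
\]
once $\epsilon$ is small. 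On the slab $\{0<x\cdot e'<\tau_3\}$ I would instead write
\[
v^{e'}(x,1)=v(x,1)-v(\sigma_{e'}x,1)=\int_0^{2(x\cdot e')}\partial_{e'}v\bigl(\sigma_{e'}x+s e',1\bigr)\,ds,
\]
observe that every point $\sigma_{e'}x+s e'$ with $0\le s\le 2(x\cdot e')$ lies within $\tau_3+C_B\epsilon\le\tau_2$ of $H(e)$ (and, for a ball, inside $\overline B$), and use $\partial_{e'}v=\partial_e v+(e'-e)\cdot\nabla v$ together with the slab estimate and the finite quantity $C_1:=\sup\{|\nabla v(y,1)|:\dist(y,H(e))\le\tau_2\}<\infty$ to get $\partial_{e'}v(\,\cdot\,,1)\ge\tfrac{c_0}4-C_1\epsilon\ge\tfrac{c_0}8$ along this segment for $\epsilon$ small; hence $v^{e'}(x,1)\ge 2(x\cdot e')\tfrac{c_0}8>0$. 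Taking $\rho$ to be the least of the finitely many smallness thresholds on $\epsilon$ that appeared then finishes the proof.

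I expect the slab case of the last step to be the main obstacle: near $H(e')$ the crude comparison from the first case degenerates (the term $v(\sigma_e x,1)-v(\sigma_{e'}x,1)$ need not be small compared with $c_0(x\cdot e)$), so one must genuinely use the sharp linear lower bound for $v^e(\cdot,1)$, its differentiated version near $H(e)$, and the closeness $|e-e'|$ small. Two further points deserve care: the quantitative integral bound feeding into the linear estimate rests on spreading the lone value from (ii) by the modulus of continuity of $v^e$, which is precisely why the time window in \eqref{eq:1} must be arranged to contain the instant $t=\tfrac14$; and the annulus case forces one to carry the inversion extension of Lemma~\ref{extension:lemma}/Remark~\ref{extension:remark} through the argument, with routine but slightly tedious modifications near the central hole.
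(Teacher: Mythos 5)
Your proposal follows essentially the same route as the paper's proof. Both proceed by (1) spreading the single pointwise value from (ii) via the modulus of continuity $\chi$ to get a uniform lower bound for $\int_{Q}(v^e)^p$, (2) applying the Harnack--Hopf estimate \eqref{eq:1} with $a=0$, $b=1$, $\delta_2=\tfrac14$ to obtain a linear-in-$(x\cdot e)$ lower bound for $v^e(\cdot,1)$ and, by differentiation at $\Sigma_1(e)$ and the fact $\partial_e v^e=2\partial_e v$ there, a quantitative gradient lower bound on $\Sigma_1(e)$, (3) using $(E\chi)$ to transfer both the value bound and the gradient bound to nearby directions $e'$, and (4) splitting $B(e')$ into the region $\{x\cdot e'\ge\tau_3\}$, where the transferred value bound applies, and the slab $\{0<x\cdot e'<\tau_3\}$, where one integrates the transferred gradient bound along a segment from $H(e')$ to $x$. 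The small presentational differences (your choice $\delta_1=d/2$ rather than $d$, phrasing in terms of $\partial_e v$ rather than $\nabla v^{e'}\cdot e'$) are immaterial.

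One place where you are actually \emph{more} careful than the paper's write-up is the slab step in the annulus case: when $B$ is an annulus, the segment from the orthogonal projection $x'\in H(e')$ to $x$ may dip inside the central hole, because $|x'|$ can be smaller than the inner radius $A_1$; then the naive integration $v^{e'}(x,1)=\int_0^{2(x\cdot e')}\partial_{e'}v(\sigma_{e'}x+s e',1)\,ds$ is not directly available since $v$ is only defined on $\overline B$. Your proposed remedy -- running the argument with the inversion extension of Lemma~\ref{extension:lemma}/Remark~\ref{extension:remark}, whose $C^1$-modulus of continuity is controlled by $\chi$ and the geometry of $B$, so that the segment stays in $\widetilde B$ where $\tilde v^{e'}$ is defined and still vanishes on $H(e')\cap\widetilde B$ with a comparable slab gradient bound -- is a correct and natural fix, and is the kind of detail the paper leaves implicit. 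In short: correct, same strategy, a touch more explicit on the annulus subtlety.
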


\begin{remark}
\label{sec:harnack-hopf-type}
The result obviously remains true if $v^e$ is replaced by $-v^e$, and
we will use this fact later on.
\end{remark}

\begin{proof} 
Let $e\in \Sn$ be such that $(i)$ and $(ii)$ are satisfied, and let
$\kappa>0$ and $p>0$ be the constants  given by Lemma~\ref{hopf:lemma:Neumann} applied to $a=0$, $b=1$, $\delta_1=d$ and $\delta_2=\frac{1}{4}.$  We first note that condition $(E\chi)$ and hypothesis $(ii)$ imply that there exists $C_1>0$,
depending only on $B,$ $d$, $k$, $M,$ and $\chi,$ such that 
$$
\kappa\bigg(\int_{Q^e} (v^e)^p\,dx\,dt\bigg)^{\frac{1}{p}} \ge C_1,
$$
where $Q^e:= \{(x,t)\::\:  x \in B(e),\: x \cdot e \ge d,\: \frac{1}{4} < t
< \frac{1}{2} \}$. Then, by Lemma~\ref{hopf:lemma:Neumann}, it follows that
\begin{align*}
|\nabla v^e(x,1)|= \nabla v^e(x,1)  \cdot e \geq C_1\qquad \text{ for all } x\in
\Sigma_1(e).
\end{align*}
By condition $(E\chi)$, there is some
$\rho_0>0$, depending only on $B,$ $d$, $k$, $M,$ and $\chi,$ such that 
\begin{equation}\label{eq2:hopf-new1}
|\nabla v^{e'}(x,1)|= \nabla v^{e'}(x,1)  \cdot e' \geq \frac{3}{4}C_1 \quad 
\left\{
  \begin{aligned}
  &\text{for $e'\in \Sn$ with}\\
  &\text{$|e-e'|<\rho_0$ and $x \in \Sigma_1(e')$.}
  \end{aligned}
\right.
\end{equation}
 Again by $(E\chi)$, we then find 
$\rho_1 \in (0,\rho_0)$, depending only on $B,$ $d$, $k$, $M,$ and $\chi,$ such that 
\begin{equation}\label{eq2:hopf}
\nabla v^{e'}(x,1)\cdot e'\geq \frac{C_1}{2} \quad 
\left\{
  \begin{aligned}
  &\text{for $e'\in \Sn$ and $x\in \overline{B}$ }\\
  &\text{ with $|e-e'|<\rho_0$ and $|x \cdot e'|\le \rho_1.$}    
  \end{aligned}
\right.
\end{equation}
By Lemma~\ref{hopf:lemma:Neumann}, there is some 
$\eta_1>0$ which only depends on $B,$ $d$, $k$, $M,$ and $\chi,$ such that
\begin{align*}
 v^e(x,1)\geq \eta_1 \qquad\text{for $x \in \overline{B(e)}$ with $x
   \cdot e \ge \frac{\rho_1}{2}$.} 
\end{align*}
Again by $(E\chi),$ we may fix $\rho\in(0,\rho_1)$, depending only on $B,$ $d$, $k$, $M$ and $\chi,$ such that for all $e'\in \Sn$ with $|e-e'|<\rho,$
\begin{align}
v^{e'}(x,1) \geq \frac{\eta_1}{2}\qquad 
\text{for $x \in \overline{B(e')}$ with $x
   \cdot e' \ge \frac{\rho_1}{2}$.} 
\label{eq3:hopf_2}
\end{align}
For fixed $e'\in \Sn$ with $|e-e'|<\rho$, \eqref{eq2:hopf} ensures that 
$$
v^{e'}(x,1) = v(x,1) - v(\sigma_{e'}(x),1) >0 \qquad
\text{for $x \in B(e')$ with $x
   \cdot e' \le \frac{\rho_1}{2}$.} 
$$
Combining this with (\ref{eq3:hopf_2}), we find that 
$$
v^{e'}(x,1)  >0 \qquad
\text{for $x \in B(e')$,}
$$
as claimed. 
\end{proof}

\section{The scalar Neumann problem}\label{results:scalar:equations}

This section is devoted to the proof of Theorem \ref{main:theorem:scalar}. Let $u\in
C^{2,1}(\overline{B}\times(0,\infty))\cap
C(\overline{B}\times[0,\infty))$ be a (possibly
sign changing) solution of \eqref{model:scalar} such that the hypothesis
(H1)-(H4) and (\ref{eq:15}) of Theorem \ref{main:theorem:scalar} are fulfilled. We first note that 
\begin{equation*}
\begin{aligned}
u_t-\mu(|x|,t)\Delta u -c(x,t) u&=f(t,|x|,0) &&\qquad \text{in $B
  \times (0,\infty),$}\\
\partial_\nu u&=0 && \qquad \text{on $\partial B \times (0,\infty)$}\\
\end{aligned}
\end{equation*}
with 
$$
c(x,t):= \left \{
  \begin{aligned}
  &\frac{f(t,|x|,u(x,t))-f(t,|x|,0)}{u(x,t)},&&\quad \text{ if } u(x,t) \not=0,\\
  &0,&&\quad \text{ if } u(x,t)=0    
  \end{aligned}
\right.
$$
for $x \in \overline B$, $t>0$. By (H1) and (\ref{eq:15}) we have $c\in L^\infty(B\times(0,\infty))$, and thus (H2) and Lemma \ref{regularity} imply that the
functions 
\begin{equation}
  \label{eq:6}
\overline {B} \times [0,1] \to \R, \qquad (x,t) \mapsto
u(x,\tau+t),\qquad \tau \ge 1  
\end{equation}
and $\overline {B} \times [0,1] \to \R^N,$ $(x,t) \mapsto
\nabla u(x,\tau+t)$, $\tau \ge 1$ are uniformly equicontinuous. Hence
there exists a function
$\chi:[0,\sqrt{1+\operatorname{diam}(B)^2}\,] \to [0,\infty)$ with
$\lim \limits_{\vartheta \to 0}\chi(\vartheta)=0$ and such
that $(E\chi)$ of Lemma~\ref{perturbationlemma} holds for all of the functions in (\ref{eq:6}). Next, we set 
$$
u^e(x,t):=u(x,t)-u(\sigma_e(x),t) \qquad \text{for $x \in \overline B,\ 
  t >0,$ and $e\in \Sn.$}
$$
We wish to apply Corollary~\ref{sec:symm-char} to the sets $\cU:=
\omega(u)$ and  
\begin{align*}
\cN:=\{e\in \Sn \mid \exists \ \  T>0 \text{ such that } u^e(x,t)>0 \text{ for all }x\in B(e), \ t>T\} 
\end{align*}
With $\cM_\cU$ defined as in (\ref{eq:5}), it is obvious that $\cN
\subset \cM_{\cU}.$ We note that the function $u^e$ satisfies
\begin{equation*}
\begin{aligned}
u^e_t-\mu(|x|,t)\Delta u^e &=c^e(x,t) u^e && \qquad \text{in $B(e)
  \times (0,\infty)$},\\
\frac{\partial u^e}{\partial \nu} &= 0 &&\qquad \text{on $\Sigma_2(e)
  \times (0,\infty)$},\\
u^e &= 0 &&\qquad \text{on $\Sigma_1(e) \times (0,\infty)$},\\
 \end{aligned}
\end{equation*}
with $\Sigma_i(e)$ as defined in (\ref{eq:11}) and
$$
c^e(x,t):=
\left \{
  \begin{aligned}
   &\frac{f(t,|x|,u(x,t))-f(t,|x|,u(\sigma_e(x),t))}{u^e(x,t)},&&\quad \text{ if } u^e(x,t)
   \not=0,\\
   &0, &&\quad \text{ if } u^e(x,t)=0.   
  \end{aligned}
\right.
$$
By (H1), there exists $M>0$ with 
$$
\|c^e\|_{L^\infty(B\times(0,\infty))} \le M \qquad \text{for all $e
  \in \Sn$.}
$$
Moreover, by making $M$ larger if necessary and using (H3), we may
also assume that 
$$
\frac{1}{M} \le \mu(|x|,t) \le M \qquad \text{for all $x \in B$, $t>0$.}
$$
By (H4), there exists $\tilde e\in
\Sn$ such that $u^{\tilde  e}(\cdot,0)\ge 0$, $u^{\tilde
  e}(\cdot,0)\not \equiv 0$ on $B(\tilde  e)$ and thus $u^{\tilde
  e}(x,t)>0$ in $B(\tilde  e)\times(0,\infty)$ by Lemma
\ref{hopf:lemma:Neumann}, so that $\tilde e \in \cN$. Moreover, it easily follows from
Lemmas~\ref{hopf:lemma:Neumann} and \ref{perturbationlemma} that $\cN$ is a relatively open subset of
$\Sn$. By Corollary~\ref{sec:symm-char}, it therefore only remains to prove that $z
\le z\circ \sigma_e$ in $B(e)$ for every $z \in \omega(u)$ and $e \in \partial \cN$.  We argue by contradiction. Assume there is $\hat e \in \partial \cN$ and $z \in \omega(u)$ such that $z
\not \le z\circ \sigma_{\hat e}$ in $B(\hat e)$. Define
\begin{align*}
 z^e:\overline{B}\to\R \qquad by \qquad z^e(x):= z(x)-z(\sigma_e(x))
\end{align*}
for $e\in\Sn.$  Then there exist constants $d,k>0$ such
that 
 \begin{equation*}
 \sup\{ z^{\hat e}(x)\::\: x \in B,\: x \cdot \hat e \ge d \} > k
 \end{equation*}
We now let $\rho>0$ be given by Lemma~\ref{perturbationlemma}
corresponding to the choices of $d$, $k$, $M$ and $\chi$ made above. 
By continuity and since $\hat e \in \partial \cN$, there exists $e \in
\cN$ such that 
\begin{align}\label{rho1}
|e-\hat e|<\rho
\end{align}
and 
 \begin{equation}
\label{eq:8}
 \sup\{ z^{e}(x)\::\: x \in B,\: x \cdot e \ge d \} > k
\end{equation}
Let $(t_n)_{n} \subset (0,\infty)$ be a sequence with $t_n \to \infty$
and $u(t_n,\cdot) \to z$ in $L^\infty(\overline B)$. By (\ref{eq:8}),
there exists $n_0 \in \mathbb N$ such that 
 \begin{equation*}
 \sup\{u^{e}(t_n,x)\::\: x \in B,\: x \cdot e \ge d \} > k
\qquad \text{for all $n \ge n_0$.}
 \end{equation*}
Moreover, by the definition of $\cN$ there exists $T>0$ such that
$u^{e}(\cdot,t)>0$ in $B(e)$ for $t \ge T$. Next, fixing $n \in \mathbb N$ such that 
$t_n \ge \max\{T+\frac{1}{4},t_{n_0}\}$ and applying Lemma~\ref{perturbationlemma}
to the function
$$
\overline{B} \times [0,1]\to \R,\qquad (x,t) \mapsto u(x,t_n-\frac{1}{4}+t),
$$
we find, using (\ref{rho1}), that $u^{\hat e}(x,t_n+\frac{3}{4})>0$ for all $x\in
B(\hat e)$. Hence $\hat e \in \cN$. Since $\cN$ is relatively open in
$\Sn$, this contradicts the fact that $\hat e \in \partial \cN$. The proof of
Theorem~\ref{main:theorem:scalar} is thus finished.

\section{Proof of the main result for competitive systems}
\label{normalization:argument}

In this section we will complete the proof of
Theorem~\ref{main:theorem:neumann}. For the remainder of this section,
let $u_1,u_2\in C^{2,1}(\overline{B}\times(0,\infty))\cap
C(\overline{B}\times[0,\infty))$ be functions such that $u=(u_1,u_2)$ solves
(\ref{model:competitive:neumann}) and such that assumptions
(h0)--(h3), (\ref{eq:12}) from the introduction are fulfilled. A key ingredient of
the proof is the following quotient estimate which compares the values
of the components of $u$ at different times. Similar estimates were
obtained by J. H\'{u}ska, P. Pol\'{a}\v{c}ik, and M. V. Safonov in
\cite[Corollary 3.10]{huska:polacik:safonov} for positive solutions of scalar
parabolic Dirichlet problems. We point out that the Neumann boundary
conditions on $\partial B$ allow to obtain a stronger result in the
present setting with a much simpler proof. 
In the following, for matters of simplicity, we sometimes omit the
arguments $(x,t)$ and $(|x|,t)$.

\begin{lemma} \label{normalization:lemma} 
There exists a constant $\eta>1$ such that
\begin{equation*}
\frac{1}{\eta}\le
\frac{u_i}{\|u_i(\cdot,\tau)\|_{L^\infty(B)}} \le \eta \ \ \ 
 \text{in $B\times[\tau -3, \tau+3]$}
\end{equation*}
for all $\tau\geq 5$ and $i=1,2.$
\end{lemma}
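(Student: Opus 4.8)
The plan is to establish the bound componentwise for each fixed $i \in \{1,2\}$ by viewing $u_i$ as a supersolution of a scalar linear parabolic equation and applying the parabolic Harnack inequality. First I would rewrite the $i$-th equation in the form
\begin{equation*}
(u_i)_t - \mu_i(|x|,t)\Delta u_i - \tilde c_i(x,t) u_i = 0 \qquad \text{in $B \times (0,\infty)$},
\end{equation*}
where $\tilde c_i(x,t) := f_i(t,|x|,u_i(x,t))/u_i(x,t)$ when $u_i(x,t) \ne 0$ (with $\tilde c_i := 0$ otherwise) and where we have absorbed the competition term by writing $-\alpha_i u_1 u_2 = -\alpha_i u_j \cdot u_i$ into the zero-order coefficient; more precisely I would set $\tilde c_i(x,t) = \frac{f_i(t,|x|,u_i)}{u_i} - \alpha_i(|x|,t) u_j(x,t)$ (with $j \ne i$), again interpreted as $-\alpha_i u_j$ on the zero set of $u_i$. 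By (h1) (local Lipschitz continuity of $f_i$ in $v$ uniformly in $r,t$, together with $f_i(t,r,0)=0$), (h3), and the uniform bound \eqref{eq:12}, the coefficient $\tilde c_i$ is bounded: $\|\tilde c_i\|_{L^\infty(B\times(0,\infty))} \le M$ for some constant $M$ independent of $i$. Likewise (h2) gives $\tfrac1M \le \mu_i \le M$ after enlarging $M$.

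Next I would pass to the inverted extension $\widetilde{u_i}$ on $\widetilde B \times (0,\infty)$ provided by Lemma~\ref{extension:lemma}, which turns $u_i$ (a Neumann solution on $B$) into a nonnegative strong solution of
\begin{equation*}
(\widetilde{u_i})_t - \widetilde{\mu_i}\,\Delta \widetilde{u_i} - \widetilde b_i\, \partial_r \widetilde{u_i} - \widetilde{c_i}\, \widetilde{u_i} = 0 \qquad \text{in $\widetilde B \times (0,\infty)$},
\end{equation*}
with coefficients uniformly bounded by a constant depending only on $B$ and $M$, and $\widetilde{\mu_i}$ bounded below. The point of the extension is exactly to remove the boundary of $B$ from the picture: $\overline B$ sits in the interior of $\widetilde B$, so at a fixed time $\tau \ge 5$ the compact set $\overline B$ lies at positive distance from $\partial \widetilde B$, and the space-time cylinder $\overline B \times [\tau-3,\tau+3]$ sits well inside $\widetilde B \times (\tau - 4, \infty)$. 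Now apply the interior parabolic Harnack inequality (in the form of \cite[Lemma 3.5]{polacik}, already invoked in the proof of Lemma~\ref{hopf:lemma:Neumann}, or any standard reference for strong solutions with bounded measurable coefficients) on a fixed parabolic neighborhood: there is a constant $\eta_0 = \eta_0(B,M) > 1$ such that
\begin{equation*}
\sup_{\overline B \times [\tau-3,\tau]} \widetilde{u_i} \;\le\; \eta_0 \inf_{\overline B \times [\tau, \tau+3]} \widetilde{u_i}
\end{equation*}
for every $\tau \ge 5$ (one can cover $\overline B \times [\tau-3,\tau+3]$ by finitely many overlapping Harnack cylinders, the number depending only on $B$, and chain the resulting estimates; the uniformity in $\tau$ is immediate because the coefficient bounds are uniform in $t$). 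Restricting back to $\overline B$ and taking $\eta := \eta_0^2$ (or a suitable fixed power obtained from the chaining), this yields
\begin{equation*}
\frac{1}{\eta} \, \|u_i(\cdot,\tau)\|_{L^\infty(B)} \;\le\; u_i(x,t) \;\le\; \eta \, \|u_i(\cdot,\tau)\|_{L^\infty(B)} \qquad \text{for all } (x,t) \in B \times [\tau-3,\tau+3],
\end{equation*}
where for the lower bound one also uses that $\|u_i(\cdot,\tau)\|_{L^\infty(B)} \le \sup_{\overline B \times [\tau-3,\tau+3]} u_i$, and for the upper bound that $\|u_i(\cdot,\tau)\|_{L^\infty(B)} \ge \inf$ over a slightly shifted cylinder. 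Replacing $\eta$ by $\max_{i}$ of the two constants (they in fact coincide, depending only on $B$ and $M$) gives the claim uniformly in $i$.

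The main obstacle I anticipate is bookkeeping with the Harnack inequality so as to get the \emph{two-sided} estimate centered at the single reference time $\tau$ while keeping $\eta$ independent of $\tau$ and of $i$. The Harnack inequality in its raw form compares a supremum on an earlier cylinder with an infimum on a later one, so to bound $u_i(x,t)$ both above and below by $\|u_i(\cdot,\tau)\|_{L^\infty(B)}$ for $t$ ranging over the symmetric interval $[\tau-3,\tau+3]$ I need to apply it on several overlapping cylinders and chain — once to push information from time $\tau$ forward, once to pull it from the top of the interval back down to $\tau$, etc. This is where the inversion-extension lemma is essential: it is what allows the Harnack inequality to be used as a genuinely \emph{interior} estimate uniformly up to $\partial B$, and it is what makes the argument, as the authors note, much simpler than the Dirichlet analogue in \cite{huska:polacik:safonov} where boundary Harnack estimates are needed. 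The uniformity in $\tau$ then follows for free from the time-translation structure: all coefficient bounds ($\widetilde{\mu_i} \ge \beta_0^{-1}$, $|\widetilde{\mu_i}|,|\widetilde b_i|,|\widetilde c_i| \le \beta_0$) hold on all of $\widetilde B \times (0,\infty)$ with constants depending only on $B$ and $M$, so the Harnack constant produced on a cylinder of fixed size is the same for every $\tau \ge 5$.
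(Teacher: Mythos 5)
Your overall strategy matches the paper's: rewrite the $i$-th equation with a bounded zero-order coefficient, extend via Lemma~\ref{extension:lemma} to remove $\partial B$ from the picture, and use the interior parabolic Harnack inequality on $\widetilde B$. However, there is a genuine gap in the Harnack bookkeeping, and it cannot be repaired by ``chaining cylinders'' alone.

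The parabolic Harnack inequality is intrinsically one-sided in time: it bounds a \emph{supremum on an earlier} slab by a constant times an \emph{infimum on a later} slab. With that tool alone you can get two of the four needed estimates. From $\sup_{B\times[\tau-1,\tau]}\widetilde{u_i}\le \eta_0\inf_{B\times[\tau,\tau+3]}\widetilde{u_i}$ you obtain $u_i(x,t)\ge \eta_0^{-1}\|u_i(\cdot,\tau)\|_{L^\infty(B)}$ for $t\in[\tau,\tau+3]$ (lower bound \emph{forward} from $\tau$), and from $\sup_{B\times[\tau-3,\tau]}\widetilde{u_i}\le \eta_0\inf_{B\times[\tau,\tau+3]}\widetilde{u_i}\le \|u_i(\cdot,\tau)\|_{L^\infty(B)}$ you obtain the upper bound \emph{backward} from $\tau$. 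But for the upper bound at times $t>\tau$ you need $\sup_{B\times[\tau,\tau+3]}u_i\le C\|u_i(\cdot,\tau)\|_{L^\infty}$, and for the lower bound at times $t<\tau$ you need $\inf_{B\times[\tau-3,\tau]}u_i\ge c\|u_i(\cdot,\tau)\|_{L^\infty}$; both require an estimate of the form $\|u_i(\cdot,s)\|_{L^\infty}\le C\|u_i(\cdot,s_0)\|_{L^\infty}$ for $s>s_0$, i.e.\ a \emph{forward growth bound}. Harnack never yields this, regardless of how you chain: every Harnack cylinder you add produces another ``$\sup$ earlier $\le C\inf$ later'' inequality and hence only propagates lower bounds forward and upper bounds backward, never upper bounds forward.

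The missing ingredient is the weak maximum principle for the linear scalar equation $(u_i)_t-\mu_i\Delta u_i=\tilde c_i u_i$ with $\tilde c_i$ bounded (\cite[Lemma~7.1]{lieberman} in the paper), which gives $\|u_i(\cdot,s)\|_{L^\infty(B)}\le\kappa_2\|u_i(\cdot,\tau-4)\|_{L^\infty(B)}$ for $s\in[\tau-3,\tau+3]$. The paper's trick, which I suggest you adopt, is to pick a single reference time $\tau-4$ strictly \emph{before} the whole interval $[\tau-3,\tau+3]$: Harnack then gives $\inf_{B\times(\tau-3,\tau+3)}u_i\ge\kappa_1\|u_i(\cdot,\tau-4)\|_{L^\infty}$, the maximum principle gives $\sup_{s\in[\tau-3,\tau+3]}\|u_i(\cdot,s)\|_{L^\infty}\le\kappa_2\|u_i(\cdot,\tau-4)\|_{L^\infty}$, and since $\|u_i(\cdot,\tau)\|_{L^\infty}$ itself is sandwiched between $\kappa_1\|u_i(\cdot,\tau-4)\|_{L^\infty}$ and $\kappa_2\|u_i(\cdot,\tau-4)\|_{L^\infty}$, the claim follows with $\eta=\kappa_2/\kappa_1$. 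Once you add the maximum principle your argument closes, and it is indeed essentially the paper's proof.
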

\begin{proof}
We only prove the estimate for $i=1$, the proof for $i=2$ is the
same. For simplicity, we write $u$ in place of $u_1$, and we
note that 
\begin{equation*}
u_t-\mu_1 \Delta u = c\, u \qquad \text{in $B
  \times (0,\infty)$}
\end{equation*}
with 
$$
c(x,t):= \alpha_1(t,|x|)u_2(x,t) + \left\{
  \begin{aligned}
   &\frac{f_1(t,|x|,u(x,t))}{u(x,t)},&&\quad \text{ if } u(x,t) \not=0,\\
   &0,&&\quad \text{ if } u(x,t)=0. 
  \end{aligned}
\right.
$$
By (h1), (h3), and (\ref{eq:12}),
we have that $c \in L^\infty(B\times(0,\infty))$. Let $\tilde u$ denote the extension of $u$ to $\widetilde B$ as
defined in (\ref{eq:16}). Then Lemma~\ref{extension:lemma}
implies that $\tilde u$ is a strong solution of 
\begin{equation*}
\begin{aligned}
(\tilde u)_t- \tilde \mu\, \Delta \tilde u-\tilde
b\,\partial_r\tilde u =\tilde c\, \tilde u \qquad \text{in
  $\widetilde B \times (0,\infty)$.} 
\end{aligned}
\end{equation*}
Here $\tilde \mu, \tilde b \in L^\infty(\widetilde B \times(0,\infty))$ are defined as in
Lemma~\ref{extension:lemma} with $\mu$ replaced by $\mu_1$, and
$\tilde c \in L^\infty(\widetilde B \times(0,\infty))$ is defined by 
\begin{equation*}
\tilde c(x,t):= \begin{cases}
c(x,t), &\quad  x\in B,\ t\in (0,\infty),\\\vspace{.1cm}
c(\hat x,t), &\quad  x\in \widetilde B \setminus B,\ t\in (0,\infty).
\end{cases}
\end{equation*}  
We also note that $\inf \limits_{B \times (0,\infty)} \tilde \mu >0$
as a consequence of $(h2)$. Next, we fix $\tau\geq 5,$ and we apply the Harnack inequality for
strong solutions given in \cite[Lemma 3.5]{polacik} (with
$p=\infty$, $U=\widetilde B,$ $D=B,$ and $v=\tilde u$). 
The application yields $\kappa_1>0$ independent of $\tau$ such that
\begin{align}\label{1}
\inf_{B\times(\tau-3,\tau+3)}u\geq \kappa_1 \|u(\cdot,\tau-4)\|_{L^\infty(B)},
\end{align}
since $\tilde u$ coincides with $u$ on $B \times (0,\infty)$. Moreover, by the maximum principle (see for example \cite[Lemma
7.1]{lieberman}) and the uniform bounds on the coefficients, there
exists $\kappa_2>\kappa_1$ independent of $\tau$ such that 
\begin{equation}\label{2}
\|u(\cdot,s)\|_{L^\infty(B)}\leq \kappa_2 \|u(\cdot,\tau-4)\|_{L^\infty(B)}
\quad \text{for $s \in [\tau-3,\tau+3]$.}
\end{equation}
Let $x\in B$ and $t\in[\tau-3,\tau+3].$ Then, by \eqref{1} and \eqref{2},
\begin{align*}
 \frac{u(x,t)}{\|u(\cdot,\tau)\|_{L^\infty(B)}}\geq \frac{\kappa_1\|u(\cdot,\tau-4)\|_{L^\infty(B)}}{\|u(\cdot,\tau)\|_{L^\infty(B)}}\geq \frac{\kappa_1}{\kappa_2},
\end{align*}
and 
\begin{align*}
 \frac{u(x,t)}{\|u(\cdot,\tau)\|_{L^\infty(B)}}
 \leq\frac{\kappa_2\|u(\cdot,\tau-4)\|_{L^\infty(B)}}{\|u(\cdot,\tau)\|_{L^\infty(B)}}
 \leq \frac{\kappa_2}{\kappa_1}.
\end{align*}
Thus the claim follows with $\eta= \frac{\kappa_2}{\kappa_1}$.
\end{proof}

Next, we slightly change some notation used in previous
sections in order to deal with competitive
systems of two equations. For $e\in\Sn,$ a radial domain $B\subset \mathbb R^N$,
$I\subset \R$ and a pair $v=(v_1,v_2)$ of functions $v_i: \overline{B}\times I\to\R,$ $i=1,2$,
we set 
\begin{equation}\label{difference:function:definition1}
\begin{aligned}
 v_1^e(x,t)&:=v_1(x,t)-v_1(\sigma_e(x),t),\ x\in \overline{B},\ t>0,\\
 v_2^e(x,t)&:=v_2(\sigma_e(x),t)-v_2(x,t),\ x\in \overline{B},\ t>0,\\
\end{aligned}
\end{equation}
The same notation is used if the functions do not depend on time. More
precisely, for a pair $z=(z_1,z_2)$ of functions $z_i: \overline{B} \to\R,$ $i=1,2$,
we set 
\begin{equation}\label{difference:function:definition2}
\begin{aligned}
 z_1^e(x)&:=z_1(x)-z_1(\sigma_e(x)),\ x\in \overline{B},\\
 z_2^e(x)&:=z_2(\sigma_e(x))-z_2(x),\ x\in \overline{B}.
\end{aligned}
\end{equation}
Since $u=(u_1,u_2)$ solves (\ref{model:competitive:neumann}), for fixed $e \in
\Sn$ we have 
\begin{equation*}
\begin{aligned}
(u_1^e)_t-\mu_1\Delta u_1^e - \hat c_1^e(x,t)u_1^e=&\alpha_1
[\hat u_1 \hat u_2 -u_1u_2]= \alpha_1 [u_1 u_2^e - \hat u_2  u_1^e] 
,\\
(u_2^e)_t-\mu_2\Delta u_2^e - \hat c_2^e(x,t)u_2^e=&\alpha_2 [u_1
u_2 -\hat u_1 \hat u_2]= \alpha_2 [u_2 u_1^e  -\hat u_1 u_2^e]   
\end{aligned}
\end{equation*}
in $B \times (0,\infty)$ with $\hat u_i(x,t):= u_i(\sigma_e(x),t))$ and 
\begin{equation*}
 \hat c_i^e(x,t):= 
\left\{
  \begin{aligned}
&\frac{f_i(t,|x|,u_i(x,t))-f_i(t,|x|,u_i(\sigma_e(x),t))}{u_i(x,t)-u_i(\sigma_e(x),t)},&&\quad \text{ if }
u_i^e(x,t) \not=0,\\
&0,&&\quad \text{ if } u_i^e(x,t)=0    
  \end{aligned}
\right.
\end{equation*}
for $i=1,2$. Setting 
\begin{align*}
c^e_1(x,t)&:=\hat c_1^e(x,t)- \alpha_1(|x|,t) u_2(\sigma_e(x),t)\\
c^e_2(x,t)&:=\hat c_2^e(x,t)- \alpha_2(|x|,t) u_1(\sigma_e(x),t)  
\end{align*}
for $x\in B$, $t>0$, we thus obtain the system 
\begin{equation}\label{linear:neumann}
\begin{aligned}
(u^e_1)_t-\mu_1\Delta u_1^e -c^e_1 u_1^e&= \alpha_1 u_1 u_2^e\\
(u^e_2)_t-\mu_2\Delta u_2^e -c^e_2 u_2^e & = \alpha_2 u_2 u_1^e
\end{aligned}
\qquad \text{in $B(e) \times (0,\infty)$}
\end{equation}
together with the boundary conditions 
\begin{equation}\label{linear:neumann-boundary}
\frac{\partial u^e_i}{\partial \nu}= 0 \quad \text{on $\Sigma_2(e)
  \times (0,\infty)$},\qquad 
u^e_i= 0 \quad \text{on $\Sigma_1(e)
  \times (0,\infty)$,}
\end{equation}
where the sets $\Sigma_i(e)$ are given as in (\ref{eq:11}) for
$i=1,2$. As a consequence of (h1),(h3), and (\ref{eq:12}), we have 
\begin{equation}\label{coefficient:estimates}
 \|c^e_1\|_{L^\infty(B \times (0,\infty))} \le M \quad \text{and}\quad \|c^e_2\|_{L^\infty(B \times (0,\infty))} \le M \qquad \text{for all $e\in \Sn$}
\end{equation}
with some constant $M>0$. Moreover, by making $M$ larger if necessary and using (h2), we may
also assume that 
\begin{equation}
  \label{eq:18}
\frac{1}{M} \le \mu_i(|x|,t) \le M \qquad \text{for $x \in B$, $t>0,$ and $i=1,2$.}
\end{equation}
We note that, by (h3) and since $u_1,u_2\geq 0$ in $B \times (0,\infty),$ system
\eqref{linear:neumann} is a (weakly coupled)
cooperative parabolic system. For these systems a variety of
estimates are available (see for example \cite{protter} and
\cite{polacik:systems}). In particular, Lemma~\ref{hopf:lemma:Neumann:systems}
can be applied to study the boundary value problem (\ref{linear:neumann}),~(\ref{linear:neumann-boundary}).

To prove Theorem~\ref{main:theorem:neumann}, we wish to apply
Corollary~\ref{sec:symm-char} to the sets 
\begin{equation}
  \label{eq:7}
\cU:=\omega(u_1) \cup -\omega(u_2) = \{z_1,-z_2\::\: z \in \omega(u)\}
\end{equation}
and 
\begin{equation}
  \label{eq:21}
\cN:=\{e\in \Sn\::\: \text{$\exists\ T>0$ s.t. $u^e_i> 0$ in $B(e)
  \times [T,\infty)$ for $i=1,2$} \}.
\end{equation}
Note that the equality in (\ref{eq:7}) is a consequence of
(\ref{eq:3}). In this case the associated set $\cM_{\cU}$,
defined in (\ref{eq:5}), can also be written as
$$
\cM_\cU=\{e\in \Sn \::\:
z_i^e \ge 0 \quad\text{in $B(e)$ for all $z \in \omega(u)$, $i=1,2$}\}.
$$
Thus we obviously have $\cN\subset \cM_{\cU}.$ Moreover, for $e\in
\Sn$ as in (h0), we have
\begin{align*}
u_i^e(\cdot,0)\geq 0,\; u_i^e(\cdot,0)\not\equiv 0 \qquad \text{in
  $B(e)$ for $i=1,2$,}
\end{align*}
Lemma~\ref{hopf:lemma:Neumann:systems} then implies that $u_i^e> 0$ in
  $B(e) \times (0,\infty)$ for $i=1,2$, so that $e \in \cN$ and
  thus $\cN$ is nonempty. We also note the following.

\begin{lemma} \label{M:open}
$\cN$ is relatively open in $\Sn$. 
\end{lemma}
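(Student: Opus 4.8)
The plan is to show that $\cN$ is open by perturbing a given direction $e \in \cN$ with the help of the perturbation Lemma~\ref{perturbationlemma}, applied to each of the two difference functions $u_1^e$ and $u_2^e$. First I would recall that, by Remark~\ref{equicontinuity}, the functions $u_i$ and their gradients $\nabla u_i$ are uniformly equicontinuous on $\overline B \times [\tau,\tau+1]$ for $\tau \ge 1$, so there is a single modulus function $\chi$ (as in condition $(E\chi)$) that works simultaneously for all the time-shifted functions $(x,t) \mapsto u_i(x,\tau-\tfrac14+t)$ on $\overline B \times [0,1]$, $i=1,2$. Combining the uniform bounds \eqref{coefficient:estimates} and \eqref{eq:18}, the linear system \eqref{linear:neumann}--\eqref{linear:neumann-boundary} provides, for each $i$, a scalar differential inequality of the form $(u_i^e)_t - \mu_i \Delta u_i^e - c_i^e u_i^e = \alpha_i u_i u_{3-i}^e \ge 0$ in $B(e)\times(0,\infty)$ — here the crucial point is that the right-hand side $\alpha_i u_i u_{3-i}^e$ is nonnegative whenever $u_{3-i}^e \ge 0$, which holds on $[T,\infty)$ by the definition of $\cN$ — together with the mixed boundary conditions required in hypothesis (i) of Lemma~\ref{perturbationlemma}, and with coefficient bounds uniform in $e$.

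Next I would fix $e \in \cN$ and pick $T>0$ with $u_i^e > 0$ in $B(e)\times[T,\infty)$ for $i=1,2$. Since $u_i^e(\cdot,T+\tfrac14)$ is continuous and strictly positive on the compact set $\{x\in\overline{B(e)}: x\cdot e \ge d\}$ for $d>0$ small enough, there are constants $d,k>0$ such that hypothesis (ii) of Lemma~\ref{perturbationlemma} holds for both $u_1^e$ and $u_2^e$ when the lemma is applied on the time interval $(T,T+1)$ (i.e. to the functions $(x,t)\mapsto u_i(x,T+t)$, after the obvious time translation so that the relevant time instant is $\tfrac14$). This yields a radius $\rho>0$, depending only on $B$, $d$, $k$, $M$, and $\chi$ — but not on $e$ itself beyond these quantities — such that $u_i^{e'}(\cdot,T+1)>0$ in $B(e')$ for every $e'\in\Sn$ with $|e-e'|<\rho$ and $i=1,2$.

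Finally, to conclude $e' \in \cN$, I would note that once $u_i^{e'}(\cdot,T+1)>0$ in $B(e')$, the strong positivity propagates for all later times: applying Lemma~\ref{hopf:lemma:Neumann:systems} to the cooperative system \eqref{linear:neumann}--\eqref{linear:neumann-boundary} for the direction $e'$ on the interval $(T+1,\infty)$, with initial data $u_i^{e'}(\cdot,T+1)\ge 0$ and not identically zero, gives $u_i^{e'}>0$ in $B(e')\times(T+1,\infty)$ for $i=1,2$. Hence the ball $\{e'\in\Sn : |e-e'|<\rho\}$ is contained in $\cN$, which proves that $\cN$ is relatively open in $\Sn$. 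The main technical obstacle is the bookkeeping needed to verify that the perturbation radius $\rho$ produced by Lemma~\ref{perturbationlemma} can be chosen independently of $e$; this works precisely because the coefficient bounds \eqref{coefficient:estimates}, \eqref{eq:18} and the equicontinuity modulus $\chi$ are uniform over all directions $e\in\Sn$, so that the input data $(B,d,k,M,\chi)$ to the lemma do not move as $e$ varies over $\cN$.
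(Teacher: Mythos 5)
Your proposal is correct and follows essentially the same route as the paper: both apply Lemma~\ref{perturbationlemma} to the time-shifted functions $(x,t)\mapsto u_i(x,T+t)$ using the cooperative differential inequalities, and then propagate positivity forward with Lemma~\ref{hopf:lemma:Neumann:systems}. The one thing you omit is that for $u_2$ the sign convention in (\ref{difference:function:definition1}) gives $u_2^e(x,t)= u_2(\sigma_e(x),t)-u_2(x,t)$, which is the \emph{negative} of the difference used in Lemma~\ref{perturbationlemma}, so you must invoke Remark~\ref{sec:harnack-hopf-type} (as the paper does) to apply the perturbation lemma to this component; also your closing remark about $\rho$ being ``independent of $e$'' is a red herring, since $d$ and $k$ are themselves chosen depending on $e$, but that uniformity is not needed for openness anyway.
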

\begin{proof} Let $e\in \cN.$ Then $(u_1^e,u_2^e)$ is a solution of \eqref{linear:neumann}, and there is $T>0$ such that $u^e_1$ and $u^e_2$ are positive in $B(e)\times(T,\infty).$ Thus
\begin{align*}
(u_1^e)_t-\mu_1\Delta u_1^e- c_1^e u_1^e &=\alpha_1 u_1 u_2^e\geq 0,\ \ x\in B(e),\ t>T,\\
(u_2^e)_t-\mu_2\Delta u_2^e- c_2^e u_2^e &=\alpha_2 u_2 u^e_1\geq 0,\ \ x\in B(e),\ t>T,
\end{align*}
since $\alpha_1$ and $\alpha_2$ are non-negative by hypothesis (h3). 
Applying Lemma \ref{perturbationlemma} and Remark~\ref{sec:harnack-hopf-type} 
to the functions 
$$
\overline {B} \times [0,1] \to \R, \qquad (x,t) \mapsto u_i(x,T+t),\qquad
i=1,2,
$$
we find that there exists $\rho>0$ such that $u_i^{e'}(\cdot,T+1)>0$
in $B(e')$ for $e'\in\Sn$ with $|e'-e|<\rho.$ Hence, by Lemma
\ref{hopf:lemma:Neumann:systems}, $e'\in \cN$ for $e'\in\Sn$ with
$|e'-e|<\rho$, and thus $\cN$ is open. 
\end{proof}

In order to apply Corollary~\ref{sec:symm-char}, it now suffices to
prove the following. 

\begin{lemma}\label{normalization:lemma:2}
For every $e\in \partial \cN$ and every $z \in \omega(u)$ we have $z^e_1 \equiv z^e_2 \equiv 0$ in $B(e)$.
\end{lemma}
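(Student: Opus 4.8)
The plan is to argue by contradiction. Suppose there are $e\in\partial\cN$ and $z\in\omega(u)$ with $z_1^e\not\equiv 0$ or $z_2^e\not\equiv 0$ in $B(e)$. Since $\cN\subset\cM_{\cU}$ and $\cM_{\cU}$ (see \eqref{eq:5}) is closed in $\Sn$, we have $e\in\overline{\cN}\subset\cM_{\cU}$, hence $z_i^e\ge 0$ in $B(e)$ for every $z\in\omega(u)$ and $i=1,2$. Thus, without loss of generality (the two components playing symmetric roles in \eqref{linear:neumann} because of the sign convention in \eqref{difference:function:definition1}), we may assume that some profile satisfies $z_1^e\ge 0$, $z_1^e\not\equiv 0$; in particular $z_1\not\equiv 0$. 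The goal is to reach a contradiction by showing $e\in\cN$, which is impossible by Lemma~\ref{M:open}.

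The first step recovers quantitative positivity of $u_1^e$ near $e$ by means of the perturbation lemma. Because $e\in\partial\cN$ we may fix $e'\in\cN$ arbitrarily close to $e$, and for such $e'$ there is $T'>0$ with $u_1^{e'},u_2^{e'}>0$ in $B(e')\times[T',\infty)$; on this set \eqref{linear:neumann} reads $(u_1^{e'})_t-\mu_1\Delta u_1^{e'}=c^{e'}_1 u_1^{e'}+\alpha_1 u_1 u_2^{e'}$ with forcing term $\alpha_1 u_1 u_2^{e'}\ge 0$, and the boundary conditions \eqref{linear:neumann-boundary} hold. Choosing a sequence $t_n\to\infty$ realizing $z$, using the uniform equicontinuity of the semiorbits (Remark~\ref{equicontinuity}) to transfer the supremum lower bound produced by $z_1^e\not\equiv 0$ from direction $e$ to $e'$, and applying Lemma~\ref{perturbationlemma} on a time window ending at $t_n+3/4$, we obtain $u_1^e(\cdot,t_n+3/4)>0$ in $B(e)$ for all large $n$. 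The same argument applied to $u_2^{e'}$ (whose forcing term in \eqref{linear:neumann} is $\alpha_2 u_2 u_1^{e'}\ge 0$ for $e'\in\cN$) shows that, if some profile has $z_2^e\not\equiv 0$, then $u_2^e$ is likewise positive in $B(e)$ at arbitrarily large times. When these two positivity assertions hold at a common large time $\tau$ — which happens automatically as soon as a single profile $z$ has both $z_1^e\not\equiv 0$ and $z_2^e\not\equiv 0$, since one may use the same window for both components — then $(u_1^e,u_2^e)$ solves the weakly coupled cooperative system \eqref{linear:neumann}--\eqref{linear:neumann-boundary} on $B(e)\times[\tau,\infty)$ with non-negative data at $t=\tau$, not identically zero; Lemma~\ref{hopf:lemma:Neumann:systems}, combined with $u_1,u_2>0$ in $B$ (which follows from $(h0)$ and the maximum principle applied to each scalar equation), then forces $u_1^e,u_2^e>0$ in $B(e)\times(\tau,\infty)$, so $e\in\cN$ — the desired contradiction.

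The remaining, and main, difficulty is the case in which $z_2^e\equiv 0$ for every $z\in\omega(u)$ — the prototype being a semi-trivial regime where $\|u_2(\cdot,t)\|_{L^\infty(B)}\to 0$ — so that the supremum bound required in Lemma~\ref{perturbationlemma} is never available for $u_2^e$ and the perturbation step fails for that component. To handle it I would normalize: set $w_\tau:=u_2/\|u_2(\cdot,\tau)\|_{L^\infty(B)}$, which by Lemma~\ref{normalization:lemma} is bounded above and below by a fixed positive constant on $B\times[\tau-3,\tau+3]$, and note that dividing the $u_2$-equation by $\|u_2(\cdot,\tau)\|_{L^\infty(B)}$ turns it into a linear Neumann problem with uniformly bounded coefficients, so that, by Lemma~\ref{regularity}, the maps $(x,t)\mapsto w_\tau(x,\tau+t)$ are equicontinuous together with their gradients and converge, along a subsequence $\tau_j\to\infty$, to a strictly positive limit. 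One then distinguishes cases according to the asymptotics of the normalized profile — for instance whether the rescaled difference $u_2^e(\cdot,\tau_j+\cdot)/\|u_2(\cdot,\tau_j)\|_{L^\infty(B)}$ stays bounded or blows up, and how the sup-norms of $u_1,u_2,u_1^e,u_2^e$ compare — and in each regime a Harnack--Hopf estimate for the (normalized) limit equation, via Lemma~\ref{hopf:lemma:Neumann}, yields that $u_2^e>0$ in $B(e)$ for all sufficiently large times. Once this is known, one takes $n$ large and sets $\tau=t_n+3/4$, so that $u_1^e(\cdot,\tau)>0$ from the perturbation step and $u_2^e(\cdot,\tau)>0$ from the normalization step; Lemma~\ref{hopf:lemma:Neumann:systems} then closes the argument exactly as above, producing $e\in\cN$ and the final contradiction. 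The symmetric situation, with the indices $1$ and $2$ interchanged, is treated in the same way.
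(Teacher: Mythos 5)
The overall framework of your proposal -- contradiction, the perturbation lemma, normalization of the component that may converge to zero via Lemma~\ref{normalization:lemma}, and a final appeal to Lemma~\ref{hopf:lemma:Neumann:systems} -- matches the paper. However, there is a genuine gap in the treatment of the hard case, and the case split itself is misaligned with the paper's.

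First, the dichotomy. You split according to whether some single profile $z$ has \emph{both} $z_1^e\not\equiv 0$ and $z_2^e\not\equiv 0$, versus $z_2^e\equiv 0$ for all profiles. The paper, after fixing a profile with $z_2^{\hat e}\not\equiv 0$ and normalizing the other component, $v_n:=u_1/\|u_1(\cdot,t_n)\|_{L^\infty}$, splits instead according to whether the \emph{normalized} difference $v_n^{\hat e}$ has a positive $\limsup$ or converges to zero. These are not the same thing: $\|v_n^{\hat e}\|$ can have a positive $\limsup$ while $z_1^{\hat e}\equiv 0$ (because $\beta_n\to 0$), so the paper's Case~1 covers configurations that your easy case misses. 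Also, the suggested sub-split of the hard case into ``stays bounded or blows up'' for $u_2^e/\|u_2(\cdot,\tau)\|$ is vacuous: this quotient is automatically bounded by $2\eta$ by Lemma~\ref{normalization:lemma}, so blow-up cannot happen.

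Second, and more importantly, the step ``a Harnack--Hopf estimate for the (normalized) limit equation, via Lemma~\ref{hopf:lemma:Neumann}, yields that $u_2^e>0$ in $B(e)$ for all sufficiently large times'' is not justified and is not how the proof closes. In the paper's Case~2 (normalized difference tending to zero) one does \emph{not} establish positivity and then invoke the system Hopf lemma; one derives a contradiction directly, by a weak-formulation argument. Concretely, with a cutoff $\varphi_n$ concentrated on the set $\Omega\times(-\zeta,\zeta)$ where $u_2^{\hat e}\ge k_1$, the quantity
\begin{equation*}
A_n=\int_{\Omega_n}\alpha_1\, v_n\, u_2^{\hat e}\,\varphi_n\,d(x,t)
\end{equation*}
is bounded below by a positive constant (using $v_n\ge 1/\eta$, $\alpha_1\ge\alpha_*$, and the fact that the negative part of $u_2^{\hat e}$ tends to zero since $\hat e\in\partial\cN$), while integrating the equation for $v_n^{\hat e}$ by parts shows $A_n\le \|v_n^{\hat e}\|_{L^\infty}\cdot\text{const}\to 0$. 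This tension is the crux of the argument, and it has no analogue in what you have written. Note that Lemma~\ref{hopf:lemma:Neumann} requires a nonnegative supersolution, which the normalized difference is not: for $\hat e\in\partial\cN$ it generally changes sign, so the Harnack--Hopf estimate cannot be applied to it. Without the weak-formulation step (or an explicit substitute for it), the proposal does not close Case~2.
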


\begin{proof}
Let $z=(z_1,z_2) \in\omega(u)$, and consider an increasing sequence
$t_n\to\infty$ with $t_1>5$ and such that $u_i(\cdot,t_n)\to z_i$
uniformly in $\overline B$ for $i=1,2.$ We will only show that $z_2^e
\equiv 0$ in $B(e)$ for all $e \in \partial \cN$, since the same
argument shows that $z_1^e\equiv 0$ in $B(e)$ for all $e
\in \partial \cN$. Since, as noted in Remark
\ref{equicontinuity}, $u_2$ and its first derivatives satisfy the
H{\"o}lder condition (\ref{equicontinuity:h}), there exists a function
$\chi:[0,\sqrt{1+\operatorname{diam(B)}^2}\,] \to [0,\infty)$ with $\lim \limits_{\vartheta \to 0}\chi(\vartheta)=0$ and such
that the equicontinuity condition $(E\chi)$ of
Lemma~\ref{perturbationlemma} holds for all of the functions 
\begin{equation}
  \label{eq:13}
\overline B \times [0,1] \to \R, \qquad (x,t) \mapsto
u_2(x,\tau +t),\qquad  \tau \ge 1.
\end{equation}
Arguing by contradiction, we now assume that
$z_2^{\hat e}\not\equiv 0$  in $B(\hat e)$ for some $\hat e
\in \partial \cN$. By the equicontinuity of the functions in
(\ref{eq:13}), there are $\zeta \in(0,\frac{1}{4})$, a nonempty open
subset $\Omega \subset \subset B(\hat e),$ and $k_1>0$ such that, after passing to
a subsequence, 
\begin{equation}\label{contradiction:hypothesis}
u_2^{\hat e} \ge k_1 \quad \text{on $\Omega \times
  [t_n-\zeta,t_n+\zeta]\quad$ for all $n \in \mathbb N$.}
\end{equation}
We now apply a normalization procedure for $u_1$, since we cannot exclude the possibility
that $u_1(\cdot,t_n) \to 0$ as $n \to \infty$. Define, for $n\in\mathbb N,$
\begin{equation*}
I_n:=[t_n-2,t_n+2] \subset \R,\qquad
\beta_n:=\|u_1(\cdot,t_n)\|_{L^\infty(B)}
\end{equation*}
and the functions 
$$
v_n : \overline{B} \times I_n \to \R, \qquad v_n(x,t)= \frac{u_1(x,t)}{\beta_n}.
$$
By Lemma \ref{normalization:lemma}, there exists $\eta>1$ such that 
\begin{equation}
 \frac{1}{\eta} \:\le\: v_n \: \leq\: \eta \quad \text{in $B\times I_n$}\qquad
 \text{for all 
   $n\in\mathbb N.$} \label{eta}
\end{equation}
Moreover, we have 
\begin{align}\label{hoeldercontinuos:1}
\sup_{\genfrac{}{}{0pt}{}{\scriptstyle{x,\bar x\in
        \overline{B},\, t,\bar t\in[s,s+1],}}{{\scriptstyle{x \not= \bar x,\, t \not=
        \bar t,\, s\in[-1,1]}}}
} \frac{|v_n(x,t_n+t)-v_n(\bar x,t_n+\bar t)|}{|x-\bar x|^\gamma+|t-\bar
  t|^{\frac{\gamma}{2}}}< K,
\end{align}
and
\begin{align*}
\sup_{\genfrac{}{}{0pt}{}{\scriptstyle{x,\bar x\in
        \overline{B},\, t,\bar t\in[s,s+1],}}{{\scriptstyle{x \not= \bar x,\, t \not=
        \bar t,\, s\in[-1,1]}}}
} \frac{|\nabla v_n(x,t_n+t)-\nabla v_n(\bar x,t_n+\bar t)|}{|x-\bar x|^\gamma+|t-\bar
  t|^{\frac{\gamma}{2}}}< K
\end{align*}
for all $n\in\mathbb N$ with positive constants $\gamma$ and $K$. This follows
from Lemma~\ref{regularity} and the fact that $v_n$ satisfies
\begin{equation*}
\begin{aligned}
(v_n)_t-\mu_1\Delta v_n &= c v_n - \alpha_1 v_n u_2 &&\qquad \text{in $B \times I_n$},\\
\partial_\nu v_n&=0 &&\qquad \text{on $\partial B \times I_n$}
\end{aligned}
\end{equation*}
with 
$$
c \in L^\infty(B\times (0,\infty)),\qquad
c(x,t):=
\left\{
  \begin{aligned}
  &\frac{f_1(t,|x|,u_1(t,x))}{u_1(t,x)},&&\quad \text{ if }u_1(t,x) \not=0,\\
  &0,&&\quad \text{ if }u_1(t,x)=0.    
  \end{aligned}
\right.
$$ 
As a consequence, by adjusting the function $\chi$ above, we may also assume that all of the functions 
\begin{equation*}
\overline B \times [0,1] \to \R, \; (x,t) \mapsto v_n(x,\tau+t),\quad \text{$|t_n - \tau|\le 1$ for some $n \in \mathbb N$} 
\end{equation*}
satisfy the equicontinuity condition $(E\chi)$ of
Lemma~\ref{perturbationlemma}. For $e \in \Sn$, $n \in \mathbb N$ we also consider 
$$
v_n^e : \overline{B(e)} \times I_n \to \R,\qquad v_n^e(x,t):= v_n(x,t)-v_n(\sigma_e(x),t),
$$
and we note that 
\begin{equation}\label{normalized:equations}
\begin{aligned}
(v_n^e)_t-\mu_1\Delta v_n^e-c_1^e v_n^e &=\alpha_1 v_n u_2^e &&\qquad
\text{in $B(e) \times I_n$,}\\
(u_2^e)_t-\mu_2\Delta u_2^e- c_2^e u_2^e &=\alpha_2 \beta_n u_2 v^e_n
&&\qquad
\text{in $B(e) \times I_n$,}\\
\partial_\nu v_n^e=\partial_\nu u_2^e &= 0 &&\qquad \text{on
  $\Sigma_2(e) \times I_n$,}\\
v^e_n(x,t)=u_2^e(x,t) &= 0 &&\qquad \text{on $\Sigma_1(e) \times I_n$}
\end{aligned}
\end{equation}
with $\Sigma_i(e)$ as defined in (\ref{eq:11}). We now distinguish two cases. 

\begin{align*}
\hspace{-4cm}\text{ \underline{Case 1:} }\ \ \ \limsup_{n\to\infty}\|v_n^{\hat e}\|_{L^\infty(B(\hat e)\times [t_n-\zeta,t_n+\zeta])}>0. 
\end{align*}
In this case, by \eqref{hoeldercontinuos:1},
there are $d \in (0,1)$, $k_2>0,$ and $t^*\in[-\zeta,\zeta]$ such
that, after passing to a subsequence, 
$$
\sup \{v^{\hat e}_n(x,t_n + t^*) : x \in B({\hat e}),\: x\cdot {\hat e} \ge d\} \ge k_2 \qquad
\text{for $n \in \mathbb N$.}
$$
Without loss, we may assume that $d< \min \{x \cdot {\hat e}\::\: x \in
\Omega\}$, so that also 
$$
\sup \{u^{\hat e}_2(x,t_n + t^*) : x \in B({\hat e}),\: x\cdot {\hat e} \ge d\} \ge k_1 \qquad
\text{for $n \in \mathbb N$}
$$
by (\ref{contradiction:hypothesis}). Next, let $k:= \frac{1}{2}\min\{k_1,k_2\},$
and let $\rho>0$ be the constant given by Lemma~\ref{perturbationlemma} for
$M$ satisfying (\ref{coefficient:estimates}),~(\ref{eq:18}) and $d$,
$k$, $\chi$ as chosen above. Since $\hat e
\in \partial \cN$, there exists $e \in {\cal N}$ such that 
$|e-\hat e|<\frac{\rho}{2}$ 
and, by equicontinuity,  
\begin{align*}
&\sup \{v^e_n(x,t_n + t^*) : x \in B(e),\: x\cdot e \ge d\} \ge k,\\
&\sup \{u^e_2(x,t_n + t^*) : x \in B(e),\: x\cdot e \ge d\} \ge k
\end{align*}
for all $n \in \mathbb N$. Since $e\in{\cal N}$ we can fix $n\in\mathbb N$ such that
\begin{align*}
 v_n^e(x,t_n+t^*-\frac{1}{4})\geq 0,\quad u_2^e(x,t_n+t^*-\frac{1}{4})\geq 0 \qquad \text{for all }x\in B(e).
\end{align*}
Then applying Lemma~\ref{perturbationlemma} to the functions 
$$
\overline B \times [0,1] \to \R, \qquad (x,t) \mapsto
u_2(x,t_n +t^*-\frac{1}{4} +t),\qquad (x,t) \mapsto
v_n(x,t_n +t^*-\frac{1}{4} +t),
$$
we conclude that 
$$
u_{2}^{\bar e}(\cdot,t_{n}+t^*+\frac{3}{4})>0 \quad
\text{and}\quad v_{n}^{\bar e}(\cdot,t_{n}+t^*+\frac{3}{4})>0
\qquad \text{in $B(\bar e)$}
$$
for all $\bar e\in \Sn$ with $|\bar e-e|<\rho$, and thus in
particular for $\bar e = \hat e$. This yields $u_i^{\hat
  e}(\cdot,t_{n}+t^*+\frac{3}{4})>0$ in $B(\hat e)$ for $i=1,2$, and
thus $\hat e \in \cN$   
by Lemma \ref{hopf:lemma:Neumann:systems}. Since $\cN \subset \Sn$ is
relatively open by Lemma \ref{M:open}, this contradicts the
hypothesis that $\hat e \in \partial \cN$.

\begin{align}\label{normalization:goes:to:zero}
\hspace{-4cm}\text{ \underline{Case 2:} }\ \ \ \lim_{n\to\infty}\|v_n^{\hat e}\|_{L^\infty(B({\hat e})\times [t_n-\zeta,t_n+\zeta])}=0. 
\end{align}
In this case we fix a
nonnegative function $\varphi\in C_c^\infty(B({\hat e}) \times
(-\zeta,\zeta))$ with $\varphi \equiv 1$ on $\Omega \times
(-\frac{\zeta}{2},\frac{\zeta}{2})$. Moreover, we let 
$$
\Omega_n:= B({\hat e})
\times (t_n-\zeta,t_n+\zeta)\qquad \text{and}\qquad \varphi_n \in
C_c^\infty(\Omega_n),\quad \varphi_n(x,t):= \varphi(x,t_n+t).
$$
Setting
$(u_2^{\hat e})^+:=\max\{u_2^{\hat e},0\}$ and $(u_2^{\hat e})^-:=-\min\{u_2^{\hat e},0\},$ we
find by (h3), (\ref{contradiction:hypothesis}) and \eqref{eta} that 
\begin{align*}
A_n:= \int_{\Omega_n} &\alpha_1 v_n u_2^{\hat e}\varphi_n d(x,t)
=\int_{\Omega_n} \alpha_1 v_n [(u_2^{\hat e})^+-(u_2^{\hat e})^-]\varphi_n d(x,t)\nonumber\\
&\geq \frac{\alpha_*}{\eta} \int_{\Omega_n} (u_2^{\hat e})^+\varphi_n
d(x,t)- \alpha^* \eta \,\|(u_2^{\hat e})^-\|_{L^\infty(\Omega_n)}\,\|\varphi_n\|_{L^1(\Omega_n)},\nonumber\\
&\geq \frac{\alpha_*}{\eta} k_1 |\Omega| \zeta \;-\;\alpha^* \eta\, 
\|(u_2^{\hat e})^-\|_{L^\infty(\Omega_n)}\,\|\varphi\|_{L^1(\Omega \times
(-\zeta,\zeta))},
\end{align*}
for $n \in \mathbb N$, whereas $\lim \limits_{n \to \infty}\|(u_2^{\hat
  e})^-\|_{L^\infty(\Omega_n)}=0$ since ${\hat e}\in \partial \cN.$
Hence $\liminf \limits_{n \to
  \infty} A_n>0$. On the other hand, integrating by parts, we have by \eqref{normalized:equations} that
\begin{align*}
A_n &=\int_{\Omega_n}\!\![(v_n^{\hat e})_t-\mu_1\Delta v_n^{\hat e}- c_1^{\hat e} v_n^{\hat e}]\varphi_n d(x,t)\\ &=-\int_{\Omega_n}\!\![v_n^{\hat e}(\varphi_n)_t +v_n^{\hat e} \Delta (\mu_1\varphi_n) + c_1^{\hat e} v_n^{\hat e}\varphi_n ]d(x,t)\\
&\leq \|v^{\hat e}_n\|_{L^\infty(\Omega_n)} \int_{\Omega_n}\Bigl(|(\varphi_n)_t
|+ |\Delta(\mu_1 \varphi_n)| + M \varphi_n\Bigr) d(x,t)
\end{align*}
for $n \in \mathbb N$. Invoking (h2) and \eqref{normalization:goes:to:zero}, we conclude that
$\limsup \limits_{n\to\infty} A_n\le0.$ So we have
obtained a contradiction again, and thus the claim follows.
\end{proof}

\begin{proof}[Proof of Theorem~\ref{main:theorem:neumann} (completed)] 
By Lemmas~\ref{M:open} and \ref{normalization:lemma:2} and the remarks
before Lemma~\ref{M:open}, the assumptions of
Corollary~\ref{sec:symm-char} are satisfied with $\cU$ and $\cN$ as
defined in (\ref{eq:7}) and (\ref{eq:21}). Consequently, there exists
$p \in \Sn$ such that every $z \in \cU$ is foliated Schwarz symmetric
with respect to $p$. By definition of $\cU$, this implies that every
$z=(z_1,z_2) \in \omega(u)$ has the property that $z_1$ is foliated
Schwarz symmetric with respect to $p$ and $z_2$ is foliated Schwarz
symmetric with respect to $-p$.
\end{proof}

\section{The cooperative case and other problems}\label{sec:other:problems}

In this section we first complete the 
\begin{proof}[Proof of Theorem~\ref{main:theorem:neumann-cooperative}]
Let $u_1,u_2\in C^{2,1}(\overline{B}\times(0,\infty))\cap
C(\overline{B}\times[0,\infty))$ be functions such that $u=(u_1,u_2)$ solves
\eqref{model:cooperative:neumann}, and suppose that $(h0)'$,
$(h1)$--$(h3)$ and (\ref{eq:12}) are satisfied. The proof is almost
exactly the same as the one of Theorem~\ref{main:theorem:neumann}
with only two changes. The first change concerns
the definitions of $v_2^e$ and $z_2^e$ in
(\ref{difference:function:definition1}) and
(\ref{difference:function:definition2}). More precisely, we now set $v_i^e(x,t)= v_i(x,t)-v_i(\sigma_e(x),t)$ and $z_i^e(x)=
z_i(x)-z_i(\sigma_e(x))$ for $i=1,2$. With this change, we again arrive at
the linearized system~(\ref{linear:neumann}). Considering now the sets
\begin{equation*}
\cU:=\omega(u_1) \cup \omega(u_2) = \{z_1,z_2\::\: z \in \omega(u)\}
\end{equation*}
in place of (\ref{eq:7}) and  
\begin{equation*}
\cN:=\{e\in \Sn\::\: \text{$\exists\ T>0$ s.t. $u^e_i> 0$ in $B(e)
  \times [T,\infty)$ for $i=1,2$} \},
\end{equation*}
we may now validate the assumptions of Corollary~\ref{sec:symm-char}
in exactly the same way as in Section~\ref{normalization:argument}. Hence
the proof is complete.
\end{proof}

\begin{remark}
(i) Note that both in Theorem~\ref{main:theorem:neumann} and in
Theorem~\ref{main:theorem:neumann-cooperative} we assume that the
components $u_i$ are non-negative, and this assumption is essential for the
cooperativity of the linearized system~(\ref{linear:neumann}). 
Without the sign restriction, systems~(\ref{model:competitive:neumann}) and
(\ref{model:cooperative:neumann}) arise from each other by
replacing $u_i$ by $-u_i$ for $i=1,2$ and adjusting $f$ accordingly.\\
(ii) As a further example, we wish to mention the cubic system
\begin{equation}\label{cubic:system}
 \begin{aligned}
(u_1)_t-\Delta u_1 &=\lambda_1u_1+\gamma_1u_1^3-\alpha_1u^{2}_2 u_1
&&\qquad \text{in $B \times (0,\infty)$,}\\
(u_2)_t-\Delta u_2 &=\lambda_2u_2+\gamma_2u_2^3-\alpha_2u_1^2 u_2&&\qquad \text{in $B \times (0,\infty)$,}\\
\partial_\nu u_1&=\partial_\nu u_2=0&&\qquad \text{on $\partial B
  \times (0,\infty)$},\\
u_i(x,0)&=u_{0,i}(x) \ge 0&& \qquad \text{for $x\in B$, $i=1,2$,}
\end{aligned}
\end{equation}
where $\lambda_i,\gamma_i,$ and $\alpha_i$ are positive constants. The
elliptic counterpart of this system is being studied
extensively due to its relevance in the study of binary mixtures of Bose-Einstein
condensates, see \cite{esry}. The asymptotic symmetry of uniformly
bounded classical solutions of this problem satisfying the initial
reflection inequality condition $(h0)$ can be characterized in
the same way as in Theorem~\ref{main:theorem:neumann}. To see
this, minor adjustments are needed in the proof of
Theorem~\ref{main:theorem:neumann} to deal with a slightly different
linearized system. Details will be given in
\cite{saldana-phd}. Symmetry aspects of the elliptic counterpart of
(\ref{cubic:system}) have been studied in \cite{weth:tavares}.\\ 
(iii) Our method breaks down if the coupling term has different signs
in the components, as e.g. in a predator-prey
type system 
\begin{equation*}
\begin{aligned}
 (u_1)_t-\mu_1\Delta u_1 &=f_1(t,|x|,u_1)+\alpha_1 u_1u_2&&\qquad
 \text{in $B \times (0,\infty)$},\\
 (u_2)_t-\mu_2\Delta u_2 &=f_2(t,|x|,u_2)-\alpha_2 u_1u_2&&\qquad 
\text{in $B \times (0,\infty)$}.
\end{aligned}
\end{equation*}
In this case, there seems to be no way to derive a cooperative
linearized system of the type (\ref{linear:neumann}) for difference functions related to hyperplane
reflections. The asymptotic shape of solutions for this system
(satisfying Dirichlet or Neumann boundary conditions) remains an
interesting open problem.\\
(iv) Consider general systems of the form  
\begin{equation}\label{general:cooperative:model}
\begin{aligned}
 (u_i)_t-\Delta u_i &=f_i(t,|x|,u)&&\qquad \text{in $B \times (0,\infty)$},\\
 \partial_{\nu} u_i&=0&&\qquad \text{on $\partial B \times (0,\infty)$},\\
\end{aligned}
\end{equation}
for $i=1,2$, where the nonlinearities $f_i:[0,\infty)\times I_B \times \mathbb R^2 \to \mathbb R$ are
locally Lipschitz in $u=(u_1,u_2)$ uniformly with respect to $r\in I_B$ and
$t>0$. We call (\ref{general:cooperative:model}) an {\em irreducible
cooperative system} if for every $m>0$ there is a constant $\sigma>0$
such that 
$$
\frac{\partial f_{i}(t,r,u)}{\partial u_j}
 \:  \geq \: \sigma\quad \left \{
\begin{aligned}
&\text{for every $i,j \in \{1,2\}$, $i \not=j$, $r\in I_B$, $t>0,$ $|u|\le m$}\\
&\text{such that the derivative exists.}  
\end{aligned}
\right.
$$
For this class of systems a symmetry result similar to
Theorem~\ref{main:theorem:neumann-cooperative} can be derived 
{\em even for sign changing solutions}, and in fact the proof is simpler. 
The precise statement and detailed arguments are given in \cite{saldana-phd},
while we only discuss the key aspects here. We first note that, for a given uniformly bounded
classical solution $u=(u_1,u_2)$ of (\ref{general:cooperative:model}) and $e \in \Sn$, we can use the
Hadamard formulas as in \cite{polacik:systems} to derive a cooperative
system for the functions $(x,t) \mapsto
u^e_i(x,t):=u_i(x,t)-u_i(\sigma_e(x),t)$. This system has the form 
\begin{align*}
 (u_i^e)_t-\Delta u_i^e = \sum_{j=1}^2 c^e_{ij} u^e_j \qquad \text{in $B(e)\times (0,\infty)$}
\end{align*}
with functions $c^e_{ij} \in L^\infty(B\times (0,\infty))$, $i,j=1,2$
such that 
$$
\inf_{B(e) \times (0,\infty)}c^e_{ij}>0 \qquad \text{for $i\neq
j$.}
$$
With the help of the latter property, one can prove that for every sequence of
positive times $t_n$ with $t_n \to \infty$ and every $e \in \Sn$ we have the equivalence 
$$
\lim_{n \to \infty}\|u_1^e(\cdot,t_n)\|_{L^\infty(B(e))}= 0 \qquad \Longleftrightarrow
\qquad \lim_{n \to \infty}\|u_2^e(\cdot,t_n)\|_{L^\infty(B(e))}= 0.
$$
As a consequence, semitrivial limit profiles $(z_1,0), (0,z_2) \in \omega(u)$
have the property that the nontrivial component must be a radial
function, and hence no normalization procedure as in
Section~\ref{normalization:argument} is needed to deal with these
profiles. This is the reason why the positivity of components is not
needed in this case. Details are given in \cite{saldana-phd}. Note that the cooperative system
(\ref{model:cooperative:neumann}) is {\em not} irreducible. \\
(v) The arguments and results for irreducible cooperative systems sketched in (iv) also apply to a corresponding system with $n\geq 3$ equations.  On the other hand, one may also consider cooperative systems of the form 
\begin{equation}\label{model:cooperative:neumann_1}
 (u_i)_t-\mu_i(|x|,t)\Delta u_i =f_i(t,|x|,u_i)+\sum_{\stackrel{j=1}{j \not=i}}^n \alpha_{ij}(|x|,t) u_iu_j, \quad i=1,\dots,n.
\end{equation}
with $n \ge 3$ equations which are not irreducible. Assume that $(h1)$ and $(h_2)$ hold for $f_i$ and $\mu_i$, $i=1,\dots,n$, and that $\alpha_{ij} \in L^\infty(I_B \times (0,\infty))$ are nonnegative functions for $i,j=1,\dots,n$, $i \not = j$.  It is then an open question which additional positivity assumptions on the coefficients $\alpha_{ij}$ are required for the corresponding generalization of Theorem~\ref{main:theorem:neumann-cooperative}. Similar arguments as in Section 5 apply in the case where $\alpha_{ij} \ge \alpha_*>0$ for $i,j=1,\dots,n$, $i \not = j$, but we do not think that this assumption is optimal.  We thank the referee for pointing out this question.
\end{remark}

\section{Appendix}

Here we show the existence of positive solutions of the
elliptic system~(\ref{Lotka:Volterra:system-elliptic}) without
foliated Schwarz symmetric components. More precisely, we have the
following result. 

\begin{theo}\label{thm:local:maxima:system} Let $k\in\mathbb N$. 
Then there exists $\eps, \lambda >0$ such that (\ref{Lotka:Volterra:system-elliptic})  admits a positive classical solution $(u_1,u_2)$ in $B:= B_\eps= \{x \in \R^2\::\: 1 - \eps < |x| <1\} \subset \R^2$ such that the angular derivatives $\frac{\partial u_i}{\partial \theta}$ of the components change sign at least $k$ times on every circle contained in $\overline B_\eps$.
\end{theo}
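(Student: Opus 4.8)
The plan is to produce these solutions by local bifurcation from the branch of constant equilibria. Fix once and for all an integer $m$ with $2m\ge k$. For every $\lambda>0$ the constant pair $(u_1,u_2)=(\lambda,\lambda)$ solves \eqref{Lotka:Volterra:system-elliptic}, and substituting $u_i=\lambda+v_i$ turns the system into
\[
-\Delta v_1=-(\lambda+v_1)v_2,\qquad -\Delta v_2=-(\lambda+v_2)v_1\ \text{ in }B_\eps,\qquad \partial_\nu v_i=0\ \text{ on }\partial B_\eps .
\]
I would record this as $F(\lambda,v)=0$ for a smooth map $F\colon\R\times X\to Y$, where $X$ consists of the pairs $(v_1,v_2)\in(C^{2,\alpha}(\overline{B_\eps}))^2$ satisfying the Neumann condition and even in the polar angle $\theta$ (symmetric under $\theta\mapsto-\theta$), and $Y$ is the analogous subspace of $(C^{0,\alpha}(\overline{B_\eps}))^2$. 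Imposing the reflection symmetry is the device that makes the relevant angular eigenvalue simple. Since $F(\lambda,0)=0$ for all $\lambda$, I search for bifurcation points along this trivial branch.

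The linearization $D_vF(\lambda,0)(h_1,h_2)=(-\Delta h_1+\lambda h_2,\,-\Delta h_2+\lambda h_1)$ is Fredholm of index zero from $X$ to $Y$. Adding the two equations and using $\lambda>0$ with the Neumann condition forces $h_1+h_2\equiv0$ in the kernel, while subtracting shows $\phi:=h_1-h_2$ must solve $-\Delta\phi=\lambda\phi$. Separating variables, the Neumann eigenfunctions of $-\Delta$ on $B_\eps$ that are even in $\theta$ are exactly the functions $R_{M,n}(r)\cos(M\theta)$, $M,n\ge0$, where $R_{M,0}>0$ is the node-free ground state of the radial problem $-R''-\tfrac1rR'+\tfrac{M^2}{r^2}R=\mu R$ with Neumann conditions, associated with the eigenvalue $\mu_{M,0}$. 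I would then invoke the thin-annulus asymptotics: as $\eps\to0$ one has $\mu_{M,0}\to M^2$ for each fixed $M$, while $\mu_{M,n}\to\infty$ for every $n\ge1$. Hence, fixing $\eps>0$ small enough (depending on $m$), the value $\mu_m:=\mu_{m,0}$ is a simple eigenvalue of $-\Delta$ in the reflection-symmetric class, so $\ker D_vF(\mu_m,0)$ is one-dimensional, spanned by $(\phi,-\phi)$ with $\phi(r,\theta)=R_{m,0}(r)\cos(m\theta)$. The Crandall--Rabinowitz transversality condition is immediate: $D_vF(\lambda,0)$ is symmetric for the $(L^2(B_\eps))^2$-inner product, so its range in $Y$ is characterized by $L^2$-orthogonality to its kernel, whereas $\partial_\lambda D_vF(\mu_m,0)(\phi,-\phi)=(-\phi,\phi)$ pairs nontrivially with $(\phi,-\phi)$.

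The Crandall--Rabinowitz theorem then provides a $C^1$ curve $s\mapsto(\lambda(s),v(s))\in\R\times X$, $|s|<\delta$, of solutions of $F=0$ with $\lambda(0)=\mu_m$ and $v(s)=s(\phi,-\phi)+o(s)$ in $X$ as $s\to0$; an elliptic bootstrap upgrades them to classical (indeed $C^\infty$) solutions. For $s\neq0$ small the functions $u_i(s):=\lambda(s)+v_i(s)$ are positive on $\overline{B_\eps}$, since $\lambda(s)\to\mu_m>0$ and $v(s)\to0$ uniformly. Moreover $\partial_\theta u_1(s)=s\,\partial_\theta\phi+o(s)=-sm\,R_{m,0}(r)\sin(m\theta)+o(s)$ and $\partial_\theta u_2(s)=sm\,R_{m,0}(r)\sin(m\theta)+o(s)$, the error terms tending to $0$ in $C^0(\overline{B_\eps})$. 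Since $R_{m,0}$ is bounded below by a positive constant on $[1-\eps,1]$ and $\theta\mapsto\sin(m\theta)$ has $2m$ simple zeros in $[0,2\pi)$, a routine perturbation argument gives that for $|s|$ small enough each $\partial_\theta u_i(s)$ changes sign at least $2m\ge k$ times on every concentric circle $\{|x|=\rho\}$, $\rho\in[1-\eps,1]$, uniformly in $\rho$. Fixing such an $s$ and setting $\lambda:=\lambda(s)$ completes the proof.

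I expect the crux to be the second paragraph: one must make the thin-annulus spectral picture precise enough to guarantee that $\mu_{m,0}$ is a \emph{simple} eigenvalue once the reflection symmetry is imposed, i.e.\ that for small $\eps$ it coincides with no higher radial mode (these escape to $+\infty$) and with no other even angular harmonic (these cluster near the distinct values $M^2$). The transversality verification and the passage from the Crandall--Rabinowitz expansion to a count of sign changes of the angular derivative that is uniform over all concentric circles are then routine.
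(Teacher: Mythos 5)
Your proposal takes essentially the same route as the paper: bifurcation à la Crandall--Rabinowitz from the constant branch $(\lambda,\lambda)$ in the reflection-symmetric function spaces, with the kernel identified via the sum/difference of components, simplicity of the relevant eigenvalue secured by thin-annulus asymptotics, transversality from the $L^2$-self-adjointness of the linearization, and the sign-change count read off from the leading-order term $\varphi(r)\cos(m\theta)$. The only cosmetic differences are an opposite overall sign in $F$ and your choice of $m$ with $2m\ge k$ where the paper simply takes the angular mode $k$ (both yield at least $k$ sign changes).
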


\begin{proof}
We apply a classical bifurcation result of Crandall and Rabinowitz, see \cite[Lemma 1.1]{crandall.rabinowitz}. Let $\tilde Y$ denote the space of functions $u \in C(\overline{B_\eps})$ which are symmetric with respect to reflection at the $x_1$-axis and $\tilde X$ the space of all $u \in \tilde Y \cap C^2(\overline {B_\eps})$ with $\partial_\nu u=0$ on $\partial B_\eps$. Then $\tilde X$ and $\tilde Y$ are Banach spaces with respect to the norms of $C^2(\overline{B_\eps})$, $C(\overline {B_\eps})$, respectively. Let $X:= \tilde X \times \tilde X$, $Y:= \tilde Y \times \tilde Y$, and let $F: (0,\infty) \times  X \to Y$ be given by 
$$
F(\lambda, u)= {\Delta u_1 + \lambda(u_1+\lambda) - (u_1+\lambda)(u_2+\lambda) 
 \choose \Delta u_2+ \lambda(u_2+\lambda) - (u_1+\lambda)(u_2+\lambda)} = {\Delta u_1 -(u_1+\lambda) u_2 
 \choose \Delta u_2 - (u_2+\lambda)u_1}
$$
Then we have $F(\lambda,0)= 0$ for all $\lambda >0$. Moreover, $u=(u_1,u_2) \in X$ solves (\ref{Lotka:Volterra:system-elliptic}) if and only if $F(\lambda, u_1-\lambda,u_2-\lambda) = 0$. We consider the partial derivative
$$
\partial_u F : (0,\infty) \times  X \to \cL(X,Y),\qquad \partial_u F(\lambda,u)v= { \Delta v_1 -u_2 v_1 - (u_1+\lambda)v_2 
 \choose \Delta v_2 - u_1v_2 - (u_2+\lambda) v_1}.
$$
For $\lambda>0$ we put 
$$
A_\lambda:= \partial_u F(\lambda,0) \in \cL(X,Y),\qquad  A_\lambda v = {\Delta v_1-\lambda v_2 
 \choose \Delta v_2 - \lambda v_1},
$$
and we let $N(A_\lambda)$ resp. $R(A_\lambda)$ denote the kernel and the image of $A_\lambda$, respectively. If $v \in N(A_\lambda)$, then $c:= v_1+v_2$ satisfies $-\Delta c +\lambda c=0$ in $B_\eps$ and $\partial_\nu c= 0$ on $\partial B_\eps$,
which easily implies that $c \equiv 0$ since $\lambda > 0$. Consequently, $v \in N(A_\lambda)$ if and only if $v_2=-v_1$ and 
$$
-\Delta v_1 = \lambda v_1 \quad \text{in $B_\eps$,} \qquad \partial_\nu v_1= 0 \quad \text{on $\partial B_\eps$.}
$$
By separation of variables, there exists $k \in {\mathbb {N}} \cup \{0\}$ such that in polar coordinates we have $v_1(r,\theta)=\varphi(r) \cos(k \theta)$,  where $\varphi \in C^2([1-\eps,1])$ satisfies 
\begin{equation}
  \label{eq:23}
 -\Delta_r \varphi +\frac{k^2}{r^2} \varphi= \lambda \varphi \quad \text{in $(1-\eps,1)$,} \qquad \partial_r \varphi(1-\eps)= \partial_r \varphi(1)=0 
\end{equation} 
with $\Delta_r= \partial_{rr} + \frac{1}{r}\partial_r$.  Let $\lambda_j(k, \eps) \ge 0$ denote the $j$-th eigenvalue of (\ref{eq:23}), counted with multiplicity in increasing order. By Sturm-Liouville theory, these eigenvalues are simple.  It is easy to see that, for fixed $k \in {\mathbb{N}} \cup \{0\}$, we have $\lambda_1(k, \eps) \to k^2$ and $\lambda_{j}(k,\eps) \to \infty$ for $j \ge 2$ as $\eps \to 0$. Moreover, $\lambda_j(k,\eps)$ is strictly increasing in $k$ for fixed $\eps>0$. We now fix $k \in {\mathbb{N}}$, and we choose $\eps= \eps(k)>0$ small enough such that $\lambda_2(0,\eps) >\lambda_1(k,\eps)$. We then set $\lambda_*= \lambda_1(k,\eps)>0$, and we let $\varphi$ denote the unique positive eigenfunction of (\ref{eq:23}) for $\lambda= \lambda_*$ with $\|\varphi\|_\infty=1$.  It then follows that $N(A_{\lambda_*})$ is spanned by $(\psi,-\psi) \in X$ with $\psi(r,\theta)= \varphi(r) \cos(k \theta)$. Moreover, it easily follows from integration by parts that $\int_{B_\eps} \psi (v_1 - v_2)\,dx = 0$ for every $v=(v_1,v_2) \in R(A_{\lambda_*})$. Since $A_{\lambda_*}$ is a Fredholm operator of index zero, we thus conclude that 
$$
R(A_{\lambda_*})= \Bigl \{v \in Y\::\: \int_{B_\eps} \psi (v_1 - v_2)\,dx = 0 \Bigr\}.
$$
In particular, since $\frac{d}{d \lambda} A_\lambda \,v = {-v_2 \choose -v_1}$ for $v \in X$ and $\lambda>0$, we find that $\frac{d}{d \lambda} A_\lambda \Big|_{\lambda=\lambda_*}(\psi,-\psi)=(-\psi,\psi) \not \in R(A_{\lambda_*})$. Hence the assumptions of \cite[Lemma 1.1]{crandall.rabinowitz}
are satisfied, and thus there exists $\delta>0$ and $C^1$-functions $\lambda: (-\delta,\delta) \to 
(0,\infty)$ and $u: (-\delta,\delta)  \to X$ such that $\lambda(0)= \lambda_*$, $F(\lambda(t),u(t))=0$ for all $t \in (-\delta,\delta)$ and $u(t)= t (\psi,-\psi) + o(t)$ in $X$. Hence, fixing $t \in (-\delta,\delta) \setminus \{0\}$ sufficiently close to zero and considering $\lambda:= \lambda(t)$, we find that 
$u=(u_1,u_2)$ with $u_1= u_1(t)+ \lambda(t)$, $u_2=u_2(t)+\lambda(t)$ is a positive solution of  
(\ref{Lotka:Volterra:system-elliptic}) such that the angular derivatives $\frac{\partial u_i}{\partial \theta}$ of the components change sign at least $k$ times on every circle contained in $\overline B_\eps$.
\end{proof}

\end{document}